\newtheorem{definition}{Definition}
\newtheorem{remark}{Remark}
\newtheorem{Lemma}{Lemma}
\newtheorem{theorem}{Theorem}
\newtheorem{corol}{Corollary}
\def\vint_#1{\mathchoice%
          {\mathop{\kern 0.2em\vrule width 0.6em height 0.69678ex depth -0.58065ex
                  \kern -0.8em \intop}\nolimits_{\kern -0.4em#1}}%
          {\mathop{\kern 0.1em\vrule width 0.5em height 0.69678ex depth -0.60387ex
                  \kern -0.6em \intop}\nolimits_{#1}}%
          {\mathop{\kern 0.1em\vrule width 0.5em height 0.69678ex
              depth -0.60387ex
                  \kern -0.6em \intop}\nolimits_{#1}}%
          {\mathop{\kern 0.1em\vrule width 0.5em height 0.69678ex depth -0.60387ex
                  \kern -0.6em \intop}\nolimits_{#1}}}
\def\vintslides_#1{\mathchoice%
          {\mathop{\kern 0.1em\vrule width 0.5em height 0.697ex depth -0.581ex
                  \kern -0.6em \intop}\nolimits_{\kern -0.4em#1}}%
          {\mathop{\kern 0.1em\vrule width 0.3em height 0.697ex depth -0.604ex
                  \kern -0.4em \intop}\nolimits_{#1}}%
          {\mathop{\kern 0.1em\vrule width 0.3em height 0.697ex depth -0.604ex
                  \kern -0.4em \intop}\nolimits_{#1}}%
          {\mathop{\kern 0.1em\vrule width 0.3em height 0.697ex depth -0.604ex
                  \kern -0.4em \intop}\nolimits_{#1}}}
\newcommand{\kint}{\vint}
\newcommand{\aveint}[2]{\mathchoice%
          {\mathop{\kern 0.2em\vrule width 0.6em height 0.69678ex depth -0.58065ex
                  \kern -0.8em \intop}\nolimits_{\kern -0.45em#1}^{#2}}%
          {\mathop{\kern 0.1em\vrule width 0.5em height 0.69678ex depth -0.60387ex
                  \kern -0.6em \intop}\nolimits_{#1}^{#2}}%
          {\mathop{\kern 0.1em\vrule width 0.5em height 0.69678ex depth -0.60387ex
                  \kern -0.6em \intop}\nolimits_{#1}^{#2}}%
          {\mathop{\kern 0.1em\vrule width 0.5em height 0.69678ex depth -0.60387ex
                  \kern -0.6em \intop}\nolimits_{#1}^{#2}}}
\newcommand{\ol}{\overline}
\newcommand{\ul}{\underline}
\newcommand{\Om}{\Omega}
\newcommand{\R}{\mathbb{R}}
\newcommand{\E}{\mathbb{E}}
\newcommand{\A}{\mathscr{A}}
\newcommand{\eps}{\varepsilon}
\newcommand{\half}{\frac{1}{2}}
\begin{document}
\large
\title[Games for the two membranes problem]{Games for the two membranes problem}
\author[Alfredo Miranda and Julio D. Rossi]{Alfredo Miranda and Julio D. Rossi}

\address{Alfredo Miranda and Julio D. Rossi
\hfill\break\indent
Departamento  de Matem{\'a}tica, FCEyN,
Universidad de Buenos Aires,
\hfill\break\indent Pabellon I, Ciudad Universitaria (1428),
Buenos Aires, Argentina.}

\email{{\tt amiranda@dm.uba.ar , jrossi@dm.uba.ar}}

\begin{abstract}
	In this paper we find viscosity solutions to the two membranes problem (that is a system with two obstacle-type equations) with two different $p-$Laplacian operators taking limits of value functions of a sequence of games. We analyze two-player zero-sum games that are played in two boards with different rules in each board. At each turn both players (one inside each board) have the choice of playing without changing board or to change to the other board (and then play one round of the other game). We show that the value functions corresponding to this kind of games converge uniformly to a viscosity solution of the two membranes problem. If in addition the possibility of having the choice to change boards depends on a coin toss we show that we also have convergence of the value functions to the two membranes problem that is supplemented with an extra condition inside the coincidence set.
\end{abstract}

\date{}

\maketitle

\section{Introduction} 

The deep connection between Partial Differential Equations and Probability
is a well known and widely studied subject. For linear operators, like the Laplacian,
this relation turns out to rely on the validity of mean value formulas
for the solutions in the PDE side and martingale identities in the
probability side. In fact, there is a standard connection between 
 the Laplacian and the Brownian motion or with
the limit of random walks as the step size goes to zero, see, for example,
\cite{Doob,Doob2,Doob3,Hunt,Kac,Kaku,FFF,Williams}.  
Recently, starting with \cite{PSSW} some of these connections were extended
to cover nonlinear equations.
For a probabilistic approximation of the infinity Laplacian there is a game (called Tug-of-War game in
the literature) that was introduced in \cite{PSSW} whose value functions
approximate solutions to the PDE as a parameter that controls the size of the steps in the game goes to zero. 
In \cite{PS}, see also \cite{MPRa} and  \cite{MPRb}, the authors introduce a modification of the game (called Tug-of-War with noise) that is related to the normalized $p-$Laplacian. 
Approximation of solutions to 
linear and nonlinear PDEs using game theory is by now a classical subject.
The previously mentioned results were extended to cover very different equations (like Pucci operators, the Monge-Ampere equation, the obstacle problem, etc), we refer to the recent books 
\cite{BRLibro} and \cite{Lewicka} and references therein.  
However, much less is known concerning 
the relation between systems of PDEs and games, see \cite{nosotros,nosotros2,Mitake} for recent
references. One of the systems that attracted the attention 
of the PDE community is the two membrane problem. This problem models
the behaviour of two elastic membranes that are clamped at the boundary of a prescribed domain (they are assumed to be ordered, one membrane is 
assumed to be above the other)
and they are subject to different external forces
(the membrane that is on top is pushed down and the one that is below is pushed up). 
The main assumption here is that the two membranes do not penetrate each other
(they are assumed to be ordered in the whole domain). This situation can be
modeled by a two obstacle problem; 
 the lower membrane acts as an obstacle from below for the free elastic equation
 that describes the location of the upper membrane, while, conversely, the upper 
membrane is an obstacle from above for the equation for the lower membrane. 
When the equations that obey the two membranes have a variational structure
this problem can be tackled using calculus of variations (one aims to minimize
the sum of the two energies subject to the constraint that the functions that describe the position of the membranes are always ordered inside the domain, one is bigger or equal than the other), see \cite{VC}. However, when the involved equations are not variational the analysis
relies on monotonicity arguments (using the maximum principle). 
Once existence of a solution (in an appropriate sense) is obtained a lot of interesting 
questions arise, like uniqueness, regularity of the involved functions, a description of the contact set, the regularity of the contact set, etc. See \cite{Caffa1,Caffa2,S}, the 
dissertation \cite{Vivas}
and references therein.

Our main goal in this paper is to analyze games whose value functions approximate solutions to the two membranes problem with two different normalized $p-$Laplacians (these are fully nonlinear non-variational equations, see below). 

\subsection{The normalized $p-$Laplacian and game theory}
To begin, let us introduce the normalized $p-$Laplacian and give the relation between this
operator and the game called Tug-of-War with noise in the literature,
we refer to \cite{MPRb} and the recent books \cite{BRLibro} and \cite{Lewicka} for 
details. This kind of game was extensively studied, some
related references are \cite{TPSS,AS,BlancPR,BR,ChGAR,ML,QS,LPS,MPR,MPRa,MPRb,MPRparab,MRS}.

Consider the classical $p-$Laplacian operator $\Delta_p u=\mbox{div}(|\nabla u |^{p-2}\nabla u)$ with $2\leq p<\infty$. Expanding the divergence we can write (formally) this operator as a combination of the Laplacian operator $\Delta u=\sum_{n=1}^{N} u_{x_n x_n}$ and the $1$-homogeneous infinity Laplacian $\Delta_{\infty}^1 u=\langle D^2 u \frac{\nabla u}{|\nabla u|} , \frac{\nabla u}{|\nabla u|} \rangle =|\nabla u|^{-2}\sum_{1\leq i,j\leq N}u_{x_i}u_{x_i x_j}u_{x_j}$ as follows
\begin{equation}
\label{PLap}
\Delta_p u =|\nabla u |^{p-2}\big((p-2)\Delta^1_{\infty}u+\Delta u\big).
\end{equation}
Now we want to recall the mean value formula associated to this operator
obtained in \cite{MPR} (see also \cite{Arroyo} and \cite{ML}). Given $0<\alpha< 1$, let us consider $u:\Om\rightarrow\R$ such that 
\begin{equation}
\label{DPPPLap}
u(x)=\alpha\big[\half\sup_{y \in B_{\eps}(x)}u(y)+\half\inf_{y \in B_{\eps}(x)}u(y)\big]+(1-\alpha)\kint_{B_{\eps}(x)}u(y)dy + o(\eps^2),
\end{equation}
as $\eps\to 0$.
It turns out that $u$ verifies this asymptotic mean value formula if and only if $u$ is a viscosity solution to $\Delta_p u=0$, see \cite{MPR}. For general references on mean value formulas for solutions to nonlinear PDEs we refer to \cite{Arroyo,BChMR,I,KMP,ML}.
In fact, if we assume that $u$ is smooth, using a simple Taylor expansion we have
\begin{equation} \label{asymp-1}
\kint_{B_{\eps}(x)}u(y)dy-u(x)=\frac{\eps^2}{2(N+2)}\Delta u(x)+o(\eps^2),
\end{equation}
and if $|\nabla u (x)|\neq 0$, using again a simple Taylor expansion we obtain
\begin{equation} \label{asymp-2}
\begin{array}{ll}
\displaystyle \big[\half\sup_{y \in B_{\eps}(x)}u(y)+\half\inf_{y \in B_{\eps}(x)}u(y)\big]-u(x)\sim\big[\half u(x+\eps\frac{\nabla u(x)}{|\nabla u(x)|})+\half u(x-\eps\frac{\nabla u(x)}{|\nabla u(x)|})\big]\\[8pt]
\displaystyle =\frac{\eps^2}{2}\Delta^1_{\infty}u(x)+o(\eps^2).
\end{array}
\end{equation}
Then, if we come back to \eqref{DPPPLap}, divide by $\eps^2$, and take $\eps\to 0$, we get
\begin{equation}
\displaystyle 0=\frac{\alpha}{2}\Delta^1_{\infty}u(x)+\frac{(1-\alpha)}{2(N+2)}\Delta u(x).
\end{equation}
Thus, from \eqref{PLap}, we get that the function $u$ is a solution to the equation 
\begin{equation}
-\Delta_p u(x)=0
\end{equation}
for $p> 2$ such that 
\begin{equation}
\label{palpha}
\frac{\alpha}{1-\alpha}=\frac{p-2}{N+2}.
\end{equation}
These computations can be made rigorous using viscosity theory, we refer to \cite{MPR}.

Now, let us introduce the normalized $p-$Laplacian (also called the game $p-$Laplacian).
\begin{definition}
	\label{def.plaplaciano}
	Given $p>2$, let the normalized $p-$Laplacian be defined as
	\begin{equation}
	\label{defpLap}
	\Delta_p^{1} u(x)=\frac{\alpha}{2}\Delta_{\infty}^1 u(x)+\frac{1-\alpha}{2(N+2)}\Delta u(x),
	\end{equation}
	with 
\begin{equation}
\frac{\alpha}{1-\alpha}=\frac{p-2}{N+2}.
\end{equation} 
\end{definition}

Remark that this is a nonlinear elliptic $1-$homogeneous operator
that is a linear combination between the classical Laplacian and the $\infty$-Laplacian. 


There is a game theoretical approximation 
to these operators. The connection between the Laplacian and the Brownian motion or with
the limit of random walks as the step size goes to zero is well known, see 
\cite{Kac,Kaku,FFF}.  
For a probabilistic approximation of the infinity Laplacian there is a game (called Tug-of-War game in
the literature) that was introduced in \cite{PSSW}. 
In \cite{PS}, see also \cite{MPRa} and  \cite{MPRb}, the authors introduce a two-player zero-sum game called Tug-of-War with noise that is related to the normalized $p-$Laplacian.
This is a two-player zero-sum game (two players, Player I and Player II, play one against the other and the total earnings of one player are exactly the
total losses of the other). The rules of the game are as follows: In a bounded smooth domain $\Om$ (what we need here is that $\partial\Omega$ satisfies an exterior sphere condition) given an initial position $x\in\Om$, with probability $\alpha$ Player I and Player II play Tug-of-War (the players toss a fair coin and the winner chooses the next position of the token in $B_{\eps}(x)$), and with probability  $(1-\alpha)$ they move at random (the next position of the token is chosen at random in $B_{\eps}(x)$). They continue playing with these rules until
the game position leaves the domain $\Om$. At this stopping time Player II pays Player I
the amount determined by a pay-off function defined outside $\Om$. The value of the game
(defined as the best value that both players may expect to obtain) verifies a mean value formula, called the Dynamic Programming Principle (DPP), that in this case is given by
\begin{equation}
u^{\eps}(x)=\alpha\big[\half\sup_{y \in B_{\eps}(x)}u^{\eps}(y)+\half\inf_{y \in B_{\eps}(x)}u^{\eps}(y)\big]+(1-\alpha)\kint_{B_{\eps}(x)}u^{\eps}(y)dy.
\end{equation}
This is exactly the same formula as \eqref{DPPPLap} but without the error term $o(\eps^2)$.
Notice that the value function of this game depends on $\eps$, the
parameter that controls the size of the possible movements.
Remark that this equation can be written as 
\begin{equation}
0=\alpha\big[\half\sup_{y \in B_{\eps}(x)}u^{\eps}(y)+\half\inf_{y \in B_{\eps}(x)}u^{\eps}(y) 
- u^{\eps}(x) \big]+(1-\alpha)\kint_{B_{\eps}(x)}(u^{\eps}(y)-u^{\eps}(x))dy.
\end{equation}
Using as a main tool the asymptotic formulas \eqref{asymp-1} and \eqref{asymp-2}, in \cite{BlancPR} and \cite{MPRb} the authors show that there is a uniform limit
as $\eps \to 0$,
$$
u^\eps \rightrightarrows u
$$
and that this limit $u$ is the unique solution (in a viscosity sense) to the Dirichlet problem
\begin{equation}
\left\lbrace
\begin{array}{ll}
\displaystyle -\Delta^1_p u(x) =0, & x\in \Om, \\[10pt]
\displaystyle u(x)=F(x), & x\in\partial\Om.
\end{array}
\right.
\end{equation}
When one wants to deal with a non-homogeneous equation like $-\Delta^1_p u(x) = h(x)$
one can add a running payoff to the game, that is, at every play Player I pays to Player II
the amount $\eps^2 h(x)$.

\subsection{The two membranes problem} As we already mentioned, the two membranes problem describes the equilibrium position of two elastic
membranes in contact with each other that are not allowed to cross. Hence, one
of the membranes acts as an obstacle (from above or below) for the other. 
Given two differential operators $F(x,u,\nabla u, D^2u)$ and $G
(x,v,\nabla v, D^2v)$ the mathematical formulation the two membranes problem is 
the following:
\begin{equation}
	\label{ED12FG}
	\left\lbrace 
	\begin{array}{ll}
		\displaystyle \min\Big\{ F(x,u(x) ,\nabla u(x), D^2u(x)),(u-v)(x)\Big\}=0, \quad & x\in\Om, \\[10pt]
	\displaystyle \max\Big\{ G(x,v(x) ,\nabla v(x), D^2v(x)),(v-u)(x)\Big\}=0, \quad & x\in\Om, \\[10pt]
	u(x)=f(x), \quad & x\in\partial\Om, \\[10pt]
	v(x)=g(x), \quad & x\in\partial\Om.
	\end{array}
	\right.
	\end{equation}
	
	In general there is no uniqueness for the two membranes problem.
For example, take $u$ the solution to the first operator $F(u)=0$ with $u|_{\partial \Omega} = f$ and $v$ the solution to the obstacle problem from above for $G(v)$ and boundary datum $g$. This pair $(u,v)$ is a solution to the two membranes problem
($v$ is a solution to the obstacle problem with $u$ as upper obstacle and $u$ is a solution 
to the obstacle problem with $v$ as lower obstacle (in fact $u$ is a solution 
in the whole domain and is above $v$)). Analogously, one can consider 
$\tilde{v}$ as the solution to $G(\tilde{v})=0$ with $v|_{\partial \Omega} = g$ and $\tilde{u}$ the solution to the obstacle problem from above for $F(\tilde{u})$ and boundary datum $f$, to obtain
a pair $(\tilde{u}, \tilde{v})$ that is a solution to the two membranes problem. 
In general, it holds that $(u,v) \neq (\tilde{u}, \tilde{v})$. 
	
The two membranes problem for the Laplacian with a right hand side, that is, for $F(D^2u)=-\Delta u +h_1$ and $F(D^2v)=-\Delta v-h_2$, was first considered in \cite{VC} 
using variational arguments. Latter, in \cite{Caffa1} the authors solve the two membranes problem for two different fractional Laplacians of different order (two 
linear non-local operators defined by two different kernels). 
Notice that in this case the problem is still variational. In these cases an extra condition appears, namely, the sum of the two operators vanishes,
\begin{equation}
\label{extra-caffa}
G(u) + F (v) =0,
\end{equation} 
inside $\Omega$. Moreover, this extra condition together with the variational structure is used to prove a $C^{1,\gamma}$ regularity result for the solution.

The two membranes problem for a nonlinear operator was
studied in \cite{Caffa1, Caffa2,S}.
In particular, in \cite{Caffa2} the authors consider a version of
the two membranes problem for two different fully nonlinear operators, $F(D^2 u)$ and
$G(D^2 u)$. Assuming that $F$ is convex and that
\begin{equation} 
\label{cond-caffaVivas}
G(X) = -F(-X),
\end{equation} 
they prove that solutions are $C^{1,1}$ smooth. 

We also mention that a more general version of the two membranes problem
involving more than two membranes was considered by several authors (see for example  \cite{ARS,CChV,ChV}).

\subsection{Description of the main results}
In this paper we use the previously described Tug-of-War with noise game to obtain games whose value functions approximate solutions (in a viscosity sense) to a system with two obstacle type equations (a two membrane problem).

\subsubsection{First game} \label{subsect-1st}
Let us describe the first game that we are going to study in more detail. Again, it is a two-player zero-sum game. The game is played in two boards,
that we call board 1 and board 2, that are two copies of a ﬁxed smooth bounded domain $\Om\subset \R^N$. We fix two final payoff functions $f,g:\R^N\backslash \Om \rightarrow \R$  two uniformly Lipschitz functions with $f\geq g$, and two running payoff functions $h_1,h_2:\Om\rightarrow \R$ (we also assume that they are uniformly Lipschitz functions), corresponding to the first and second board respectively. Take a positive parameter $\eps$. Let us use two games with different rules for the first and second board respectively associated to two different $p-$Laplacian operators. To this end, let us fix two numbers $0<\alpha_i<1$ for $i=1,2$. Playing in the first board the rules are the following: with $\alpha_1$ probability we play with Tug-of-War rules, this means a fair coin is tossed and the player who the coin toss chooses the next position inside the ball $B_{\eps}(x)$, and with $(1-\alpha_1)$ probability we play with a random walk rule, the next position is chosen at random in $B_{\eps}(x)$ with uniform probability. Playing in the first board we add a running payoff of amount $-\eps^2 h_1(x_0)$ (Player I gets $-\eps^2 h_1(x_0)$ and Player II $\eps^2 h_1(x_0)$). We call this game the $J_1$ game. Analogously, in the second board we use 
$\alpha_2$ to encode the probability that we play Tug-of-War and $(1-\alpha_2)$ for the probability to move at random, this time
with a running payoff of amount $\eps^2 h_2 (x_0)$. We call this game $J_2$.

To the rules that we described in the two boards $J_1$ and $J_2$
 we add the following ways of changing boards: in the first board, Player I decides to play
with $J_1$ rules (and the game position remains at the first board) or to change boards and the new position of the token is chosen playing the $J_2$ game rule in the second board. In the second board the rule is just the opposite, in this case Player II decides to play with $J_2$ game rules (and remains at the second board) or to change boards and play in the first board with the $J_1$ game rules. 

The game starts with a token at an initial position $x_0\in\Om$ in one of the two boards. 
After the first play the game continues with the same rules
(each player decides to change or continue in one board plus the rules for the two diffenet Tug-of-War with noise game at each board) until the tokes leaves 
the domain $\Omega$ (at this time the game ends). This gives a random sequence of points (positions of the token)
and a stopping time $\tau$ (the first time that the position of the token is outside $\Omega$ in any of the
two boards). The sequence of positions will be denoted by
$$
\Big\{(x_0,j_0),(x_1,j_1),\dots (x_\tau, j_\tau) \Big\},
$$
here $x_k \in \Omega$ (and $x_\tau \not\in \Omega$) and the second variable, $j_k\in\{1,2\}$, is
just an index that indicates in which board we are playing, $j_k=1$ if the position of the token is in the first board and $j_k=2$ if we are in
the second board. As we mentioned, the game ends when the token leaves $\Omega$ at some 
point $(x_{\tau},j_\tau)$. In this case the final payoff (the amount that Player I gets and Player II pays)
is given by $f(x_\tau)$ if $j_\tau =1$ (the token leaves the domain in the first board) and 
$g(x_\tau)$ if $j_\tau =2$ (the token leaves in the second board). Hence, taking 
into account the running payoff and the final payoff, the total payoff of 
a particular occurrence of 
the game is given by
$$
\mbox{total  payoff} : = f(x_{\tau})\chi_{\{1\}}(j_{\tau})+g(x_{\tau})\chi_{\{2\}}(j_{\tau})-\eps^2\sum_{k=0}^{\tau -1}\Big(h_1 (x_k)\chi_{\{1\}}(j_{k+1})-h_2 (x_k)\chi_{\{j=2\}}(j_{k+1})\Big).
$$
Notice that the total payoff is the sum of the final payoff (given by $f(x_{\tau})$ or by $g(x_{\tau})$
according to the board at which the position leaves the domain) and the running payoff
that is given by $-\eps^2 h_1(x_k)$ and $\eps^2 h_2(x_k)$ corresponding to the board in which we play at step $k+1$. 

Now, the players fix two strategies, $S_{I}$ for Player I and $S_{II}$ for Player II. That is, both players decide to play or to change boards in the respective board, and in each board they select the point to go 
provided the coin toss of the Tug-of-War game is favorable. Notice that
the decision on the board where the game takes place is made by the players
at each turn (according to the board at which the position is one of the players makes the choice). Hence, when the strategies of both players are fixed, the board in which the game occurs at each turn 
is given (and it is not random).
Then, once we fix the strategies 
$S_{I}$ and $S_{II}$, everything depends only on the underlying probability: the coin toss that decides when to play Tug-of-War and when to move at random (remark that this probability is given by $\alpha_1$ or $\alpha_2$ and it is different in the two bards) 
and the coin toss (with probability 1/2--1/2) that decides who choses the 
next position of the game if the Tug-of-War game is played. With respect to this underlying probability, with fixed strategies $S_{I}$ and $S_{II}$, we can compute the expected
final payoff starting at $(x,j)$ (recall that $j=1,2$ indicates the board at which is the position of the game),
$$ 
\E_{S_{I},S_{II}}^{(x,j)}[\mbox{total payoff}].
$$

The game is said to have a value if
\begin{equation} \label{value}
w^{\eps}(x,j)=\sup_{S_{I}}\inf_{S_{II}}\E_{S_{I},S_{II}}^{(x,j)}[\mbox{total payoff}] 
= \inf_{S_{II}}\sup_{S_{I}}\E_{S_{I},S_{II}}^{(x,j)}[\mbox{total payoff}].
\end{equation}
 Notice that this value $w^\eps$ is the best possible expected outcome that Player I and
 Player II may expect to obtain playing their best. Here we will prove that this game has a value.
 The value of the game, $w^\eps$, is composed in fact by two functions, the first one defined
 in the first board,
 $$
 u^{\eps}(x) := w^\eps (x, 1)
 $$
 that is the expected outcome of the game if the initial position is at the first board
 (and the players play their best)
 and 
 $$
 v^{\eps}(x) := w^\eps (x, 2)
 $$ 
that is the expected outcome of the game when the initial position is in the second board.
It turns out that these two functions $u^\eps$, $v^\eps$ satisfy a system of equations
that is called the Dynamic Programming Principle (DPP) in the literature.
In our case, the corresponding (DPP) for the game is given by
\begin{equation}
\label{DPP}
\left\lbrace
\begin{array}{ll}
\displaystyle u^{\eps}(x)=\max\Big\{ J_1(u^{\eps})(x), J_2(v^{\eps})(x)\Big\} ,
\qquad &  x \in \Omega,  \\[10pt]
\displaystyle v^{\eps}(x)=\min\Big\{ J_1(u^{\eps})(x), J_2(v^{\eps})(x)\Big\} ,
&  x \in \Omega,  \\[10pt]
u^{\eps}(x) = f(x), \qquad & x \in \R^{N} \backslash \Omega,  \\[10pt]
v^{\eps}(x) = g(x), \qquad &  x \in \R^{N} \backslash \Omega.
\end{array}
\right.
\end{equation}
where 
\begin{equation}
J_1(w)(x)=\alpha_1\Big[\half \sup_{y \in B_{\eps}(x)}w(y) + \half \inf_{y \in B_{\eps}(x)}w(y)\Big]+(1-\alpha_1)\kint_{B_{\eps}(x)}w(y)dy-\eps^2h_1(x)
\end{equation}
and
\begin{equation}
J_2(w)(x)=\alpha_2\Big[\half \sup_{y \in B_{\eps}(x)}w(y) + \half \inf_{y \in B_{\eps}(x)}w(y)\Big]+(1-\alpha_2)\kint_{B_{\eps}(x)}w(y)dy+\eps^2h_2(x).
\end{equation}

\begin{remark} {\rm
		From the (DPP) and the condition $f\geq g$ it is clear that the value functions of the game are ordered. We have $$u^{\eps}(y)\geq v^{\eps}(y)$$ for all $y\in\R^N$.}
\end{remark} 

\begin{remark}{\rm
	 Observe that the (DPP) reflects the rules for the game described above. That is, the $J_1$ rule says that with $\alpha_1$ probability we play with Tug-of-war game and with $(1-\alpha_1)$ probability we play the random walk game with a running payoff that involves $h_1$. Analogously, in the $J_2$ game the probability is given by $\alpha_2$ and the running payoff involves $h_2$. Also notice that the $\max$ and $\min$ that are arise in the (DPP) corresponds to the choices of the players to change board (or not). In the first board 
	 the first player (who aims to maximize the expected outcome) is the one who decides 
	 while in the second board the second player (that wants to minimize) decides. }
\end{remark}

Our first result says that the value functions of the game 
converge uniformly as $\eps \to 0$ to a pair of continuous functions $(u,v)$ 
that is a viscosity solution to a system of partial differential equations in which
two equations of obstacle type appear. 

\begin{theorem} 
	\label{teo.limite}
	There exists a sequence $\eps_j \to 0$ such that $(u^{\varepsilon_j}, v^{\varepsilon_j})$ converges to a pair of continuous functions $(u,v)$,
	$$
	u^{\varepsilon_j}\rightrightarrows u, \qquad v^{\varepsilon_j}\rightrightarrows v
	$$
	uniformly in $\overline{\Omega}$. 
	The limit pair is a viscosity solution to the two membrane system
	with two different $p-$Laplacians, that is, 
	\begin{equation}
	\label{ED1}
	\left\lbrace
	\begin{array}{ll}
	\displaystyle u (x) \geq v(x), \qquad & \ x\in \Omega, \\[10pt]
	\displaystyle -\Delta_{p}^{1}u(x)+ h_1(x)\geq 0, \quad  \quad -\Delta_q^1 v(x)- h_2(x)\leq 0, \qquad & \ x\in \Om,\\[10pt] 
	\displaystyle -\Delta_p^1 u(x)+ h_1(x)=0, \quad  \quad -\Delta_q^1 v(x)- h_2(x)=0,\qquad & \ x\in \{u>v\}\cap\Om,\\[10pt] 
	u(x) = f(x), \qquad & \ x \in \partial \Omega,  \\[10pt]
	v(x) = g(x), \qquad & \ x \in \partial \Omega.
	\end{array}
	\right.
	\end{equation}
	Here $p$ and $q$ are given by
	\begin{equation} \label{p,q}
	\frac{\alpha_1}{1-\alpha_1}=\frac{p-2}{N+2} \quad \mbox{and} \quad \frac{\alpha_2}{1-\alpha_2}=\frac{q-2}{N+2}.
	\end{equation}
\end{theorem}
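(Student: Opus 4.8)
The plan is to proceed in three stages: first establish that the game has a value satisfying the (DPP) \eqref{DPP}, then obtain uniform (in $\eps$) estimates that yield compactness and a uniform limit, and finally verify that any limit pair solves the system \eqref{ED1} in the viscosity sense. For the first stage I would note that the DPP \eqref{DPP} is a fixed-point equation for the operator $(u,v)\mapsto(\max\{J_1(u),J_2(v)\},\min\{J_1(u),J_2(v)\})$ acting on bounded functions that agree with $f,g$ outside $\Omega$; since $J_1,J_2$ are order-preserving and the $\max/\min$ structure preserves order, one can run a monotone (Perron-type) iteration starting from sub/super solutions built from constants (using that $f,g,h_1,h_2$ are bounded) to get existence of a solution $(u^\eps,v^\eps)$ to \eqref{DPP}. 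That this fixed point is actually the value of the game in the sense of \eqref{value} is the standard martingale argument: given the DPP solution, a player using an almost-optimal strategy for $J_1$ or $J_2$ (choosing the point that nearly realizes the sup or inf, and choosing whether to switch boards according to which of $J_1(u^\eps),J_2(v^\eps)$ is larger/smaller) makes the sequence $w^\eps(x_k,j_k)$ plus the accumulated running payoff a super- or sub-martingale, and optional stopping at $\tau$ (which is a.s. finite by the exterior sphere condition, exactly as in \cite{MPRb}) sandwiches $\sup_{S_I}\inf_{S_{II}}\E[\text{payoff}]$ and $\inf_{S_{II}}\sup_{S_I}\E[\text{payoff}]$ between $w^\eps$ and itself.

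For the second stage I would derive an oscillation/equicontinuity estimate for $u^\eps,v^\eps$ that is uniform in $\eps$. The cleanest route is a game-theoretic ``pulling back to the boundary'' comparison: using the exterior sphere condition one constructs, near each boundary point, explicit radial sub- and supersolutions of the DPP (of the form $C\,\mathrm{dist}(x,\partial\Omega)^\gamma$ plus corrections for $h_i$), giving a uniform modulus of continuity up to $\partial\Omega$ for both $u^\eps$ and $v^\eps$ independently of $\eps$; combined with the uniform bound $\|u^\eps\|_\infty,\|v^\eps\|_\infty\le C$ (from the constant barriers) this is an Arzelà–Ascoli-type argument. Since in this setting the DPP does not immediately give interior equicontinuity in a pointwise way, one uses the standard trick from \cite{MPRb}: the estimates near the boundary plus the DPP propagate continuity inward, or alternatively one invokes the asymptotic-Lipschitz/``strong comparison'' machinery for Tug-of-War with noise. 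This yields a subsequence $\eps_j\to0$ with $u^{\eps_j}\rightrightarrows u$, $v^{\eps_j}\rightrightarrows v$ uniformly on $\overline\Omega$, and the boundary conditions $u=f$, $v=g$ on $\partial\Omega$ pass to the limit; the order relation $u\ge v$ is inherited from the Remark after \eqref{DPP}.

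For the third stage I would show $(u,v)$ is a viscosity solution of \eqref{ED1} by the usual test-function argument built on the asymptotic expansions \eqref{asymp-1}–\eqref{asymp-2}. Suppose $\phi$ touches $u$ from below at an interior point $x_0$ with $\nabla\phi(x_0)\neq0$. From the first DPP equation $u^\eps(x)=\max\{J_1(u^\eps)(x),J_2(v^\eps)(x)\}\ge J_1(u^\eps)(x)$, so $u^\eps$ is a supersolution of the single-board equation $u^\eps=J_1(u^\eps)$; running the by-now-classical computation (plug in $\phi$, use that $u^\eps-\phi$ has an approximate minimum near $x_0$, divide by $\eps^2$, apply \eqref{asymp-1}–\eqref{asymp-2}, let $\eps_j\to0$) gives $-\Delta_p^1\phi(x_0)+h_1(x_0)\ge0$, which is the first inequality in \eqref{ED1}; the constraint $u\ge v$ is already known. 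For the subsolution direction one also needs the complementary equation on $\{u>v\}$: if $u(x_0)>v(x_0)$, then by uniform convergence $u^\eps(x)>v^\eps(x)$ near $x_0$ for small $\eps$, and since $v^\eps=\min\{J_1(u^\eps),J_2(v^\eps)\}$ together with $u^\eps=\max\{\cdots\}$ forces $J_1(u^\eps)(x)=J_2(v^\eps)(x)$ at points where the two values coincide — more precisely, on the open set where $u^\eps>v^\eps$ one deduces $u^\eps=J_1(u^\eps)$ and $v^\eps=J_2(v^\eps)$ exactly (both the $\max$ and the $\min$ must select, respectively, $J_1$ and $J_2$), so the single-equation viscosity argument applies in both directions and yields $-\Delta_p^1u+h_1=0$ and $-\Delta_q^1v-h_2=0$ on $\{u>v\}\cap\Omega$. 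The degenerate case $\nabla\phi(x_0)=0$ is handled as in \cite{MPRb} by the standard observation that one may assume the test function is not locally constant, or by the semicontinuous-envelope formulation of the $1$-homogeneous $p$-Laplacian.

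The main obstacle I expect is the uniform-in-$\eps$ equicontinuity in stage two: unlike the linear case there is no easy mean-value identity giving pointwise continuity, so one must adapt the Tug-of-War-with-noise regularity estimates to the coupled two-board system, checking that the coupling through $\max/\min$ and the board-switching do not destroy the barrier comparison — intuitively it does not, because switching boards only replaces one admissible move distribution by another and both satisfy the same type of estimate, but making this rigorous (especially near $\partial\Omega$, where one must control exit through either board) requires care. A secondary subtlety is justifying, in the limit, that on $\{u>v\}$ the DPP genuinely forces equality $J_1(u^\eps)=J_2(v^\eps)$ at the level needed to pass both inequalities to the limit; this is where the structure ``$u^\eps$ is the $\max$ while $v^\eps$ is the $\min$ of the \emph{same} two quantities'' is essential.
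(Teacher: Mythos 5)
Your overall architecture (existence of a DPP solution by Perron/monotone iteration, game value via sub/supermartingales and optional stopping, Arzel\`a--Ascoli compactness, viscosity passage to the limit) is the same as the paper's. However, there is one step in your stage three that, as written, would fail. You claim that ``on the open set where $u^{\eps}>v^{\eps}$ one deduces $u^{\eps}=J_1(u^{\eps})$ and $v^{\eps}=J_2(v^{\eps})$ exactly (both the max and the min must select, respectively, $J_1$ and $J_2$).'' This is not a valid pointwise consequence of the DPP. Writing $A=J_1(u^{\eps})(x)$ and $B=J_2(v^{\eps})(x)$, the DPP says $u^{\eps}(x)=\max\{A,B\}$ and $v^{\eps}(x)=\min\{A,B\}$; if $B>A$ then $u^{\eps}(x)=B>A=v^{\eps}(x)$, so $u^{\eps}(x)>v^{\eps}(x)$ is perfectly compatible with the max selecting $J_2$ and the min selecting $J_1$ — exactly the opposite of what you need. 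The correct route (the one the paper takes) is to first show, using the uniform convergence $u^{\eps}\rightrightarrows u$, $v^{\eps}\rightrightarrows v$ and the continuity of the limits, that $J_1(u^{\eps})(z)\to u(z)$ and $J_2(v^{\eps})(z)\to v(z)$ uniformly for $z$ in a small ball around $x_0$; then the strict gap $u\geq v+2\eta$ on that ball forces $J_1(u^{\eps})(z)\geq J_2(v^{\eps})(z)+\eta$ for all small $\eps$, and only then does the DPP give $u^{\eps}(z)=J_1(u^{\eps})(z)$ and $v^{\eps}(z)=J_2(v^{\eps})(z)$ there. So the conclusion you want is true, but the justification must pass through the convergence of the operators $J_i$, not through a pointwise reading of the $\max/\min$ structure.

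The second place where your proposal stops short is precisely the point you yourself flag as the main obstacle: the boundary equicontinuity estimate when the opponent controls the board switch. For the lower bound on $u^{\eps}$ near $\partial\Omega$ your barrier idea works verbatim (Player I simply refuses to change boards and pulls toward the exterior sphere center, reducing to a single-board estimate). For the upper bound, the paper's resolution has two ingredients you would need to supply: (i) an exit-time lemma for an annulus game in which at each round an adversary may choose \emph{either} of the two move distributions (Tug-of-War with $\alpha_1$ or with $\alpha_2$, or the corresponding random step), showing that one radial function $F$ with $F(r)\sim C(R/\delta)\,\mathrm{dist}(x,\partial B_\delta)$ is a supersolution for all of them simultaneously, so that $\eps^2\E[\tau]\leq C\,\mathrm{dist}+o(1)$ holds regardless of which board is played at each step; and (ii) a concrete strategy for Player II — pull toward the sphere center in both boards, and once Player I jumps to the second board, never return to the first — combined with the ordering $g\leq f$ so that exiting through \emph{either} board yields a payoff bounded by $f(y)+\eta$. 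Your intuition that ``switching boards only replaces one admissible move distribution by another'' is exactly ingredient (i), but without (ii) the estimate does not close, because the final payoff function depends on which board the token exits from.
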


\begin{remark} {\rm
	Using that $ u^{\varepsilon}\rightrightarrows u$, $v^{\varepsilon}\rightrightarrows v$ and that $u^{\eps}\geq v^{\eps}$ we immediately obtain 
	\begin{equation}
	u(y)\geq v(y) \qquad \mbox{for all} \ y\in\R^N.
	\end{equation}
	}
\end{remark}

\begin{remark} {\rm
	We can write the system \eqref{ED1} as 
	\begin{equation}
	\label{ED12}
	\left\lbrace 
	\begin{array}{ll}
	\displaystyle \min\Big\{ -\Delta_p^1 u(x)+h_1(x),(u-v)(x)\Big\}=0, \quad & x\in\Om, \\[10pt]
	\displaystyle \max\Big\{ -\Delta_q^1 v(x)-h_2(x),(v-u)(x)\Big\}=0, \quad & x\in\Om, \\[10pt]
	u(x)=f(x), \quad & x\in\partial\Om, \\[10pt]
	v(x)=g(x), \quad & x\in\partial\Om.
	\end{array}
	\right.
	\end{equation}
	Here the first equation says that $u$ is a solution to the obstacle problem for the $p$-Laplacian with $v$ as a obstacle and boundary datum $f$, and the second equation says that $v$ is a solution to the obstacle problem for the $q$-Laplacian with $u$ as a obstacle from above and boundary datum $g$.
	
	This formulation correspond to a two membrane problem in which the membranes
	are clamped on the boundary of the domain and each membrane acts as an obstacle for the other. }
\end{remark}

\begin{remark} {\rm
Since in general there is no uniqueness for the two membranes problem
we can only show convergence taking a sequence $\varepsilon_j \to 0$ using 
a compactness argument.
}
\end{remark}

Let us briefly comment on the main difficulties that appear in the proof
of this result. To show that the (DPP) has a solution we argue using monotonicity arguments in the spirit of Perron's method (a solutions is obtained as the 
supremum of subsolutions).  Once we proved existence of a solution to the (DPP)
we use this solution to construct quasioptimal strategies for the players and show that the
game has a value that coincides with a solution to the (DPP) (this fact implies uniqueness for solutions to the (DPP)). At this point we want to mention that it is crucial the rule that forces to play one round of the game when one of the players decides to change boards. If one changes boards without playing a round in the other board the game may never end (and even if we penalize games that never end it is not clear that the game has a value). See \cite{PSSW} for an example of a 
Tug-of-War game that does not have a value.
After proving existence and uniqueness for the (DPP) and the existence of a value for the game we study its behaviour as $\eps\to 0$.
Uniform convergence will follow from a variant
of Arzela-Ascoli lemma, see Lemma \ref{lem.ascoli.arzela} (this idea was used before to obtain
convergence of value functions of games, see 
several examples in \cite{BRLibro}). To this end we need that when the game starts close to the boundary in any of the two
boards any of the two players has a strategy that forces the game to
end close to the starting point in a controlled number of plays with large probability. 
For example, starting in the first board the first player may choose the strategy 
to never change boards and to point to a boundary point when the Tug-of-War game is 
played. One can show that this strategy gives the desired one-side estimate. 
However, starting in the first board, to find a strategy for Player II that achieves similar 
bounds is trickier since the player who may decide to change boards is Player I. To obtain such bounds for the terminal position and the expected number of plays in this case
is one of the main difficulties that we deal with. Once we proved uniform convergence
of the value functions we use the DPP to obtain, using the usual viscosity approach, that
the limit pair is a solution to the two membranes problem.

\subsubsection{Second game} \label{subsect-2nd}
	Let us consider a variant of the previous game in which
	the possibility of the players to change boards
	also depends on a coin toss. 
	
	This new game has the following rules: if the position of the game is at $(x_k,1)$ the players toss a fair coin (probability $1/2-1/2$), if Player I wins, he decides to play the $J_1$ game in the first board or to play $J_2$ game in the second board. On the other hand, if the winner is Player II the only option is play $J_1$ in the first board. 	
If the position is in the second board, say at $(x_k,2)$, the situation is analogous but with the roles of the players reversed, the players tosses a fair coin again, and if Player II wins, she decides between playing $J_2$ in the second board or jumping to the first board and play $J_1$, while if the Player I wins the only option is to play $J_2$ in the second board. 
Here the rules of $J_1$ and $J_2$ are exactly as before, the only thing that we changed is that the decision to change boards or not is also dependent of a fair coin toss. 

This game has associated the following (DPP):  
	 \begin{equation}
	 	\label{DPPEXT}
	 	\left\lbrace
	 	\begin{array}{ll}
	 		\displaystyle u^{\eps}(x)=\half\max\Big\{ J_1(u^{\eps})(x), J_2(v^{\eps})(x)\Big\}+\half J_1(u^{\eps})(x) , \qquad
	 		&  x \in \Omega,  \\[10pt]
	 		\displaystyle v^{\eps}(x)=\half\min\Big\{ J_1(u^{\eps})(x), J_2(v^{\eps})(x)\Big\} +\half J_2(v^{\eps})(x),
	 		&  x \in \Omega,  \\[10pt]
	 		u^{\eps}(x) = f(x), \qquad & x \in \R^{N} \backslash \Omega,  \\[10pt]
	 		v^{\eps}(x) = g(x), \qquad &  x \in \R^{N} \backslash \Omega.
	 	\end{array}
	 	\right.
	 \end{equation}
	 
	 This (DPP) also reflects the rules of the game. For instance, the first equation
	 says that with probability $1/2$ the first player decides to play $J_1$ or to change boards and play $J_2$ (hence the term $\max \{ J_1(u^{\eps})(x), J_2(v^{\eps})(x)\}$
	 appears) and with probability $1/2$ the position stays in the first board (they just play $J_1$).
 
 \begin{remark} {\rm
 		Also in this case, from the (DPP) and the condition $f\geq g$ 
		it is immediate that $$u^{\eps}(y)\geq v^{\eps}(y)$$ holds for all $y\in\R^N$.}
 \end{remark} 
	 
In this case the pair $(u^{\eps},v^{\eps})$ also converges uniformly
along a subsequence $\eps_j \to0$ to a continuous pair $(u,v)$, and this limit pair
is also a viscosity solution to the two membrane problem with an extra condition 
in the contact set. 
 
 \begin{theorem} 
	\label{teo.limite.2}
	There exists a sequence $\eps_j \to 0$ such that $(u^{\varepsilon_j}, v^{\varepsilon_j})$ converges to a pair of continuous functions $(u,v)$,
	$$
	u^{\varepsilon_j}\rightrightarrows u, \qquad v^{\varepsilon_j}\rightrightarrows v
	$$
	uniformly in $\overline{\Omega}$. 
	The limit pair is a viscosity solution to the two membrane system
	with the two different $p-$Laplacians, \eqref{ED1}, with 
	$p$ and $q$ given by \eqref{p,q}. Moreover, the following extra condition 
	\begin{equation} \label{extra-cond.intro}
	 \big(-\Delta_{p}^{1}u(x)+ h_1(x)\big)+\big(-\Delta_q^1 v(x)- h_2(x)\big) =0, \qquad \ x\in \Om,
	\end{equation}
	holds.
\end{theorem}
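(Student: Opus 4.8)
The plan is to follow, step by step, the scheme used for Theorem \ref{teo.limite}, and then add one extra observation that produces the condition \eqref{extra-cond.intro}. First I would establish existence and uniqueness of a pair $(u^{\eps},v^{\eps})$ solving the modified Dynamic Programming Principle \eqref{DPPEXT}. As for the first game, this is obtained by a monotonicity/Perron-type argument (a solution is built as a supremum of subsolutions of \eqref{DPPEXT}); the extra coin toss only replaces \eqref{DPP} by \eqref{DPPEXT}, inserting the convex combinations $\tfrac12\max\{\cdot,\cdot\}+\tfrac12 J_1(\cdot)$ and $\tfrac12\min\{\cdot,\cdot\}+\tfrac12 J_2(\cdot)$, which are still monotone in $(u^{\eps},v^{\eps})$, so the argument carries over. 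Next I would show that the second game has a value coinciding with this solution of \eqref{DPPEXT}, by building quasi-optimal strategies out of the solution exactly as in the first game; here it is again crucial that a board change comes with one round of the other game, so that the game terminates.

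Then I would prove the uniform bounds and the asymptotic equicontinuity needed to apply the Arzel\`a--Ascoli-type Lemma \ref{lem.ascoli.arzela}, obtaining a subsequence with $u^{\eps_j}\rightrightarrows u$, $v^{\eps_j}\rightrightarrows v$ in $\overline{\Omega}$. The only point needing care is the boundary estimate: starting near $\partial\Omega$ in either board, one needs a strategy forcing the token out of $\Omega$ near the starting point in a controlled expected number of moves. In the second game each player can still keep the token in ``its own'' board (in board $1$, when Player I wins the board-change coin toss he plays $J_1$, and when Player II wins she is forced to play $J_1$; symmetrically in board $2$), so the pulling-to-the-boundary strategies of Theorem \ref{teo.limite} still work, paying only the extra $\tfrac12$ factors in the estimates; the delicate direction (forcing termination for the player who does not choose the board) is handled as in the first game. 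Once uniform convergence is available, passing to the limit in \eqref{DPPEXT} via the Taylor expansions \eqref{asymp-1}--\eqref{asymp-2} gives that $(u,v)$ solves \eqref{ED1}: from \eqref{DPPEXT} one reads off $u^{\eps}\ge J_1(u^{\eps})$ and $v^{\eps}\le J_2(v^{\eps})$ pointwise (hence the two differential inequalities), while subtracting the two equations of \eqref{DPPEXT} and using $\max\{a,b\}-\min\{a,b\}=|a-b|$ yields $u^{\eps}-v^{\eps}=(J_1(u^{\eps})-J_2(v^{\eps}))^{+}$, so that $u^{\eps}=J_1(u^{\eps})$ and $v^{\eps}=J_2(v^{\eps})$ on the open set $\{u^{\eps}>v^{\eps}\}$; together with $u^{\eps_j}\rightrightarrows u$, $v^{\eps_j}\rightrightarrows v$ this gives the complementarity equations on $\{u>v\}$, and the boundary conditions follow from the boundary estimate.

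The new ingredient is the extra condition \eqref{extra-cond.intro}. The key algebraic fact is that adding the two equations in \eqref{DPPEXT} and using $\max\{a,b\}+\min\{a,b\}=a+b$ gives the \emph{exact} identity, with no $o(\eps^{2})$ error,
\[
u^{\eps}(x)+v^{\eps}(x)=J_1(u^{\eps})(x)+J_2(v^{\eps})(x),\qquad x\in\Omega,
\]
equivalently $\big[u^{\eps}(x)-J_1(u^{\eps})(x)\big]+\big[v^{\eps}(x)-J_2(v^{\eps})(x)\big]=0$ for every $x\in\Omega$. Dividing by $\eps^{2}$ and letting $\eps\to0$ should produce \eqref{extra-cond.intro}. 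On $\{u>v\}$ this is automatic, since by the previous step each bracket divided by $\eps^{2}$ already converges (in the viscosity sense) to $-\Delta_{p}^{1}u+h_1=0$ and to $-\Delta_{q}^{1}v-h_2=0$. The content is at a point $x_{0}\in\{u=v\}$: there one tests $u$ and $v$ at $x_{0}$ by smooth functions, localizes at points $x_{\eps}\to x_{0}$ selected from the identity above, and applies the two Taylor expansions \eqref{asymp-1}--\eqref{asymp-2} separately, one for $\Delta_{p}^{1}$ on the $u$-test function and one for $\Delta_{q}^{1}$ on the $v$-test function, to get $\big(-\Delta_{p}^{1}u(x_{0})+h_1(x_{0})\big)+\big(-\Delta_{q}^{1}v(x_{0})-h_2(x_{0})\big)=0$ in the viscosity sense; the degenerate case $\nabla\phi(x_{0})=0$ is handled as in \cite{MPRb}, where the $\infty$-Laplacian term is discarded after a suitable perturbation of the test function.

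The bulk of the argument is routine given Theorem \ref{teo.limite}. Two points are delicate. The first is the boundary estimate feeding Lemma \ref{lem.ascoli.arzela}, in particular the direction in which one wants to force termination near the start for the player who does not control the board change; this is the same difficulty as in the first game and is resolved by the same pulling-to-the-boundary strategies once one notes that each player can keep the token in its own board. The second, and genuinely new, difficulty is the rigorous passage to the limit in the exact identity $u^{\eps}+v^{\eps}=J_1(u^{\eps})+J_2(v^{\eps})$ at contact points: one must verify that the errors appearing when $v^{\eps}$ (resp.\ $u^{\eps}$) is compared with its test function are negligible at the scale $\eps^{2}$. This is where it matters that the identity is exact (no $o(\eps^{2})$) and that near an interior point of $\{u=v\}$ both $u^{\eps}$ and $v^{\eps}$ are simultaneously uniformly close to the common limit; I expect this limit passage to be the main obstacle.
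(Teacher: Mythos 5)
Your overall architecture (Perron's method for \eqref{DPPEXT}, quasi-optimal strategies, Arzel\`a--Ascoli, viscosity limit) matches the paper's, and your exact identity $u^{\eps}+v^{\eps}=J_1(u^{\eps})+J_2(v^{\eps})$, together with $u^{\eps}-v^{\eps}=(J_1(u^{\eps})-J_2(v^{\eps}))^{+}$, is correct and is a clean way to package what the paper computes. However, there is a genuine gap exactly at the point you flag as ``the main obstacle,'' and the resolution you sketch would not work. To pass to the limit in $\big[J_1(u^{\eps})(x_{\eps})-u^{\eps}(x_{\eps})\big]+\big[J_2(v^{\eps})(x_{\eps})-v^{\eps}(x_{\eps})\big]=0$ with a single test function $\varphi$ touching $u$ (hence also $v$) from above at $x_0\in\{u=v\}$, you need \emph{both} comparisons $J_1(u^{\eps})(x_{\eps})-u^{\eps}(x_{\eps})\leq J_1(\varphi)(x_{\eps})-\varphi(x_{\eps})+o(\eps^2)$ and $J_2(v^{\eps})(x_{\eps})-v^{\eps}(x_{\eps})\leq J_2(\varphi)(x_{\eps})-\varphi(x_{\eps})+o(\eps^2)$ at the \emph{same} point $x_{\eps}$ chosen from the approximate maximum of $u^{\eps}-\varphi$. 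The second one requires $v^{\eps}(y)-v^{\eps}(x_{\eps})\leq\varphi(y)-\varphi(x_{\eps})+o(\eps^2)$, and the best you get from $v^{\eps}\leq u^{\eps}$ is $v^{\eps}(y)-v^{\eps}(x_{\eps})\leq \varphi(y)-\varphi(x_{\eps})+\big(u^{\eps}(x_{\eps})-v^{\eps}(x_{\eps})\big)+o(\eps^2)$. The extra term $u^{\eps}(x_{\eps})-v^{\eps}(x_{\eps})\geq 0$ has the wrong sign, and uniform convergence to a common limit only makes it $o(1)$, not $o(\eps^2)$; nothing in the construction gives the $\eps^2$ rate you would need.

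The paper avoids this by a dichotomy along the sequence $x_{\eps}$. If $u^{\eps}(x_{\eps})=v^{\eps}(x_{\eps})$ (the paper's Case 2), the offending term vanishes and your sum identity goes through verbatim. If instead $u^{\eps}(x_{\eps_j})>v^{\eps}(x_{\eps_j})$, one observes from \eqref{DPPEXT} that $u^{\eps}>v^{\eps}$ forces $J_1(u^{\eps})\geq J_2(v^{\eps})$ at that point (otherwise both equations collapse to $\tfrac12 J_1+\tfrac12 J_2$ and $u^{\eps}=v^{\eps}$), hence $u^{\eps}(x_{\eps_j})=J_1(u^{\eps})(x_{\eps_j})$ exactly; the test-function comparison for $u^{\eps}$ alone then yields $-\Delta^1_p\varphi(x_0)+h_1(x_0)\leq 0$. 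The second summand is \emph{not} obtained from the identity at all in this case: one invokes the separately established fact that $v$ is a viscosity subsolution of $-\Delta^1_q v-h_2=0$ in all of $\Omega$ (which you do derive from $v^{\eps}\leq J_2(v^{\eps})$... note this actually gives the subsolution property, matching the sign convention in \eqref{ED1}), and tests $v$ from above at $x_0$ with the same $\varphi$ to get $-\Delta^1_q\varphi(x_0)-h_2(x_0)\leq 0$. Adding the two inequalities gives the ``$\leq 0$'' half of \eqref{extra-cond.intro}; the reverse inequality is symmetric. A minor additional remark: the degenerate case $\nabla\varphi(x_0)=0$ is handled in this paper via the semicontinuous envelopes $(F_i)^{*}$, $(F_i)_{*}$ (replacing the $\infty$-Laplacian term by $\lambda_1(D^2\varphi)$ or $\lambda_N(D^2\varphi)$), not by discarding the term after perturbing the test function.
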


 \begin{remark} {\rm
  Let us observe that the extra condition \eqref{extra-cond.intro}
  trivially holds in $\{u>v\}$ (since $u$ and $v$
  solve $-\Delta_{p}^{1}u(x)+ h_1(x)=0$ and $-\Delta_q^1 v(x)- h_2(x)=0$ respectively). 
  Then, this extra condition gives us new information in the contact set $\{u=v\}$.
  Notice that this extra condition (the sum of the two equations is equal to zero) is similar to the one that appears in \cite{Caffa1}, c.f. \eqref{extra-caffa}, but it is not the same as the one assumed in \cite{Caffa2} to obtain regularity of the solutions, see \eqref{cond-caffaVivas},
  since for the normalized $p-$Laplacian it is not true that 
  $-\Delta_{p}^{1}u(x) = \Delta_q^1 (- u) (x)$ unless $p=q$. }
\end{remark}

The paper is organized as follows: In Section \ref{sect-first-game} we analyze the first game;
in subsection
\ref{sect-DPP} we prove that the game has
a value and that this value is the unique solution to the (DPP).
The proof of Theorem \ref{teo.limite} is divided into subsections \ref{sect-conv} and  \ref{sect-lim-viscoso}. 
In the
first one we prove uniform convergence along a subsequence and in the second we
show that the uniform limit is a viscosity solution to the PDE system \eqref{ED1}.

In Section \ref{sect-second-game} we include a brief description of the analysis for the second game (the arguments used to show uniform convergence are quite similar). Here we focus on the details needed to show that we obtain an extra condition in the contact set. 

Finally, in Section \ref{sect-remarks} we include some remarks and comment on possible
extensions of our results.

\section{Analysis for the first game} \label{sect-first-game}

\subsection{Existence and uniqueness for the (DPP)}\label{sect-DPP}


In this section we first prove that there is a solution to the (DPP), \eqref{DPP}, next we show that the existence of
a solution to the (DPP) implies that the game has a value (it allows us to find quasi optimal strategies
for the players) and at the end we obtain the uniqueness of solutions to the DPP. 

To show existence of a solution to the (DPP) we use a variant of Perron's method
(that is, a solution can be obtained as supremum of subsolutions).

Let us consider the set of functions 
\begin{equation}
\mathscr{A}=\Big\{ (\ul{u}^{\eps},\ul{v}^{\eps}) \ : \ \mbox{ $\ul{u}^{\eps},\ul{v}^{\eps}$ are	bounded functions such that \eqref{DPPSup} holds} \Big\}
\end{equation}
with
\begin{equation}
\label{DPPSup}
\left\lbrace
\begin{array}{ll}
\displaystyle u^{\eps}(x)\leq \max\Big\{ J_1(u^{\eps})(x), J_2(v^{\eps})(x)\Big\} , \qquad
&  x \in \Omega,  \\[10pt]
\displaystyle v^{\eps}(x)\leq \min\Big\{ J_1(u^{\eps})(x), J_2(v^{\eps})(x)\Big\}, &   x \in \Omega, \\[10pt]
u^{\eps}(x) \leq f(x), \qquad & x \in \R^{N} \backslash \Omega,  \\[10pt]
v^{\eps}(x) \leq g(x), \qquad &  x \in \R^{N} \backslash \Omega.
\end{array}
\right.
\end{equation}

Notice that \eqref{DPPSup} is just the (DPP) with inequalities that say that $(\ul{u}^{\eps},\ul{v}^{\eps})$ is a subsolution to the (DPP) \eqref{DPP}. For the precise definition of sub and super solutions to (DPP) systems
we refer to \cite{nosotros,nosotros2}.

Let us begin proving that $\mathscr{A}$ is non-empty. To this end we introduce an auxiliary function. As $\Om\subset \R^N$ is bounded there exists $R>0$ such that $\Omega  \subset \subset B_R(0) \setminus \{0\}$ (without loss of generality we may assume that $0\not\in \Omega$). 
Consider the function
\begin{equation}
	z_0(x)=\left\lbrace
	\begin{array}{ll}
		\displaystyle 2K(|x|^2-R)-M \qquad & \ \mbox{if } \ x \in \ol{B_R(0)},  \\[7pt]
		\displaystyle -M  \qquad & \ \mbox{if } \ x \in \mathbb{R}^N \setminus B_R(0).
	\end{array}
	\right.
\end{equation}
This function have the following properties: The function $z_0$ is ${C}^2(\Om)$ and it holds,
for $x\in B_R(0) \setminus \{0\}$,
$$
\Delta z_0(x)= (z_0)_{rr} + \frac{N-1}{r} (z_0)_r =
4K + \frac{N-1}{r} 4K r = 4K + 4K (N-1)
$$
and 
$$
\Delta^1_{\infty} z_0(x)= (z_0)_{rr}(x) =4K.
$$
Notice that when we compute the infinity Laplace
operator of $z_0$ we have to pay special care at the origin (where the gradient
of $z_0$
vanishes), since the operator is not well-defined there. In doing this we use that 
$z_0$ is a radial function and compute the infinity Laplacian in the 
classical sense at points in $B_R(0) \setminus \{0\}$ (where the gradient
does not vanish).

Then, we get
\begin{equation}
	\begin{array}{l}
		\displaystyle \Delta^1_p z_0 (x)=\frac{\alpha_1}{2}(4K)+\frac{(1-\alpha_1)}{2(N+2)}\big(4K+4K(N-1)\big)\geq 4K\\[8pt]
		\qquad \mbox{and} \\[8pt]
		\displaystyle \Delta^1_q z_0 (x)=\frac{\alpha_2}{2}(4K)+\frac{(1-\alpha_2)}{2(N+2)}\big(4K+4K(N-1)\big)\geq 4K.
	\end{array}	
\end{equation}

We are ready to prove the first lemma.

\begin{Lemma}
For $\eps$ small enough it holds that $\mathscr{A}\neq\emptyset$.
\end{Lemma}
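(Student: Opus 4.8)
The plan is to exhibit an explicit element of $\mathscr{A}$ built from the auxiliary function $z_0$ constructed just before the statement. The natural guess is to take the constant-in-structure pair $(\ul{u}^{\eps}, \ul{v}^{\eps}) = (z_0, z_0)$, after choosing the constants $K$ and $M$ appropriately, and then verify the four inequalities in \eqref{DPPSup}. The key observation is that $z_0$ is radial and smooth on $B_R(0)\setminus\{0\}$, so a Taylor expansion of the type \eqref{asymp-1}–\eqref{asymp-2} is available on $\overline{\Om}$ (recall $\overline{\Om}\subset B_R(0)\setminus\{0\}$, so the gradient of $z_0$ does not vanish on $\overline{\Om}$ and all the computations are classical there).

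First I would record the asymptotic expansions for the two operators $J_1$ and $J_2$ applied to $z_0$. Using \eqref{asymp-1} and \eqref{asymp-2} together with the definition of $J_i$, for $x\in\overline{\Om}$ one has
\begin{equation}
J_i(z_0)(x) - z_0(x) = \eps^2\Big(\Delta_{p_i}^{1} z_0(x) - h_i(x)\Big) + o(\eps^2),
\end{equation}
where $p_1=p$, $h_1$ as given, and for $i=2$ the sign in front of $h_2$ is $+$; more precisely $J_2(z_0)(x)-z_0(x) = \eps^2(\Delta_q^1 z_0(x) + h_2(x)) + o(\eps^2)$. By the computations preceding the lemma, $\Delta_p^1 z_0 \ge 4K$ and $\Delta_q^1 z_0 \ge 4K$ on $\overline{\Om}$, and since $h_1,h_2$ are bounded on $\overline{\Om}$ (they are uniformly Lipschitz on the bounded set $\overline{\Om}$), choosing $K$ large enough — larger than $\|h_1\|_\infty + \|h_2\|_\infty + 1$, say — makes the bracketed quantities strictly positive, bounded below by a positive constant independent of $\eps$. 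Hence for $\eps$ small enough, $J_i(z_0)(x) \ge z_0(x)$ on $\overline{\Om}$ for $i=1,2$, which immediately gives
\begin{equation}
z_0(x)\le \max\{J_1(z_0)(x), J_2(z_0)(x)\}\quad\text{and}\quad z_0(x)\le \min\{J_1(z_0)(x), J_2(z_0)(x)\}
\end{equation}
for $x\in\Om$, i.e. the first two lines of \eqref{DPPSup}. One subtlety to handle carefully: $J_i(z_0)(x)$ for $x$ near $\partial\Om$ involves values of $z_0$ in $B_\eps(x)$ which may stick out of $B_R(0)$ — but $z_0$ is defined and $C^2$ on a neighborhood of $\overline{\Om}$ of width independent of small $\eps$, or alternatively one enlarges $R$ from the start, so the Taylor expansion is still valid; this should be stated but is routine.

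For the boundary inequalities, note that outside $\Om$ we need $z_0(x)\le f(x)$ and $z_0(x)\le g(x)$ for $x\in\R^N\setminus\Om$. Since $f,g$ are uniformly Lipschitz on $\R^N$, on the relevant compact region $B_R(0)\setminus\Om$ they are bounded below, and for $|x|\ge R$ we have $z_0(x) = -M$; on $\overline{B_R(0)}\setminus\Om$ we have $z_0(x)\le 2K(R^2-R)-M$ (bounded above by a constant depending only on $K,R$). Thus choosing $M$ large enough — $M \ge 2K(R^2+R) + \|f\|_\infty + \|g\|_\infty$, say, bounding the Lipschitz functions on the bounded sets where needed and using $g\le f$ — makes $z_0 \le g \le f$ everywhere outside $\Om$. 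This verifies the last two lines of \eqref{DPPSup}, and since $z_0$ is bounded, $(z_0,z_0)\in\mathscr{A}$, so $\mathscr{A}\ne\emptyset$.

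The main obstacle is not conceptual but bookkeeping: making sure the $o(\eps^2)$ error term in the Taylor expansion is uniform over $x\in\overline{\Om}$ (which holds since $z_0\in C^2$ with bounds depending only on $K,R$ on a fixed neighborhood of $\overline{\Om}$), and handling the balls $B_\eps(x)$ that protrude past $\partial B_R(0)$ or past $\partial\Om$ — the cleanest fix is to choose $R$ at the outset so that $\overline{\Om}$ together with an $\eps_0$-neighborhood lies inside $B_R(0)\setminus\{0\}$. Once $K$ is fixed to beat $\|h_i\|_\infty$ and then $M$ is fixed to beat the values of $f,g$, the threshold $\eps$ small enough is determined by requiring the $o(\eps^2)$ term to be dominated by the strictly positive $\eps^2$-coefficient, and the proof is complete.
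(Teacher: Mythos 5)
Your proposal is correct and follows essentially the same route as the paper: exhibit $(z_0,z_0)\in\mathscr{A}$ by choosing $K$ to dominate $\lVert h_1\rVert_\infty,\lVert h_2\rVert_\infty$ and $M$ to dominate the boundary data, then use the Taylor expansions \eqref{asymp-1}--\eqref{asymp-2} to get $J_i(z_0)-z_0=\eps^2(\Delta^1_{p_i}z_0\mp h_i)+o(\eps^2)\geq 0$ for $\eps$ small. Your extra care with the size of $M$ on $\overline{B_R(0)}\setminus\Om$ and with the uniformity of the $o(\eps^2)$ term is a welcome (if routine) tightening of the same argument.
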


\begin{proof}
	We consider $z_0$ with the constants $$K=\max\{\lVert h_1\rVert_{\infty},\lVert h_2\rVert_{\infty}\}+1$$ and $$M=\max\{\lVert f\rVert_{\infty},\lVert g\rVert_{\infty}\}+1,$$
and claim that $$(z_0,z_0)\in\mathscr{A}.$$

Let us prove this claim. First, we observe that the inequality \eqref{DPPSup} holds for 
$x\in\R^{N}\backslash\Om$. Then, we are left to prove that for $x\in\Omega$ it holds that 
$$
z_0(x)\leq \min\Big\{J_1(z_0)(x),J_2(z_0)(x)\Big\}.
$$
That is, we aim to show that
\begin{equation}
	\label{rec1}
	0 \leq \min\Big\{J_1(z_0)(x)-z_0(x),J_2(z_0)(x)-z_0(x)\Big\}.
\end{equation}
Using Taylor's expansions we obtain
\begin{equation}
	\begin{array}{l}
		\displaystyle	J_1(z_0)(x)-z_0(x)\\[8pt]
		\displaystyle	\qquad = \alpha_1\big[\half\sup_{y \in B_{\eps}(x)}(z_0(y)-z_0(x))+\half\inf_{y \in B_{\eps}(x)}(z_0(y)-z_0(x))\big]\\[8pt]
		\displaystyle \qquad \qquad +(1-\alpha_1)\kint_{B_{\eps}(x)}(z_0(y)-z_0(x))dy-\eps^2 h_1(x)\\[8pt]
		\displaystyle \qquad =\Big(\frac{\alpha_1}{2}\Delta^1_{\infty}z_0(x)+\frac{(1-\alpha_1)}{2(N+2)}\Delta z_0(x)\Big)\eps^2-\eps^2 h_1(x) +o(\eps^2).
	\end{array}
\end{equation}
Analogously, 
\begin{equation}
	\begin{array}{l}
		\displaystyle	J_2(z_0)(x)-z_0(x)=\Big(\frac{\alpha_2}{2}\Delta^1_{\infty}z_0(x)+\frac{(1-\alpha_2)}{2(N+2)}\Delta z_0(x)\Big)\eps^2+\eps^2 h_2(x) +o(\eps^2).
	\end{array}
\end{equation}
If we come back to \eqref{rec1} and we divide by $\eps^2$ we obtain
\begin{equation}
	\label{maxdosop1}
	0\leq\min\Big\{\Delta^1_p z_0(x)-h_1(x),\Delta^1_q z_0(x)+h_2(x)\Big\}.
\end{equation}
Using the properties of $z_0$ we have
\begin{equation}
	\Delta^1_p z_0(x)-h_1(x)\geq 3K \quad  \mbox{and} \quad \Delta^1_q z_0(x)+h_2(x)\geq 3K.
\end{equation}
Thus, the inequality \eqref{maxdosop1} holds for $\eps$ small enough. 
This ends the proof.
\end{proof}

\begin{remark} {\rm
	We can define a different auxiliary function $z^{\eps}$ as the solution to the following 
	problem
	\begin{equation} \label{ju}
	\left\lbrace
	\begin{array}{ll}
	\displaystyle z^{\eps}(x)=\min_{i\in\{1,2\}}\Big\{\alpha_i\big(\half\sup_{y \in B_{\eps}(x)}z^{\eps}(y)+\half\inf_{y \in B_{\eps}(x)}z^{\eps}(y)\big)+(1-\alpha_i)\kint_{B_{\eps}(x)}z^{\eps}(y)dy\Big\}-\eps^2 K, & x\in\Om, \\[10pt]
	\displaystyle z^{\eps}(x)=-M, & x\notin\Om.
	\end{array}
	\right.
	\end{equation}
	The existence of this function is given in \cite{BlancPR} (see Theorem 1.5). 
	In fact, \eqref{ju} is the (DPP) that corresponds to a game in which one player
	(the one that wants to minimize the expected payoff) chooses the coin that 
	decides the game to play between Tug-of-War and a random walk. 
	
	If we argue as before we can prove that $(z^{\eps},z^{\eps})\in\mathscr{A}$.}
\end{remark}

Now our goal is to show that the functions $(\ul{u}^{\eps},\ul{v}^{\eps})\in\A$ are uniformly bounded.
To prove this fact we will need some lemmas. 
Let us consider the function $w_0=-z_0$. 
 This function have the following properties: 
	$$
	\Delta w_0(x)= -\Delta z_0 = -4K - 4K(N-1)
	$$
	and 
	$$
	\Delta^1_{\infty} w_0(x)=-\Delta^1_{\infty} w_0(x)=-4K.
	$$
	Then, we have
	\begin{equation}
	\begin{array}{l}
	\displaystyle \Delta^1_p w_0 (x)=\frac{\alpha_1}{2}(-4K)+\frac{(1-\alpha_1)}{2(N+2)}\big(-4K-4K(N-1)\big)\leq -4K\\[8pt]
	 \qquad \mbox{and} \\[8pt]
	\displaystyle \Delta^1_q w_0 (x)=\frac{\alpha_2}{2}(-4K)+\frac{(1-\alpha_2)}{2(N+2)}\big(-4K-4K(N-1)\big)\leq -4K.
	\end{array}	
	\end{equation}

Let us prove a technical lemma.

\begin{Lemma} \label{lema.1}
	\label{w0} Given $K=\max \{\lVert h_1\rVert_{\infty},\lVert h_1\rVert_{\infty}\}+1$ and $M =\max\Big\{\lVert f\rVert_{\infty},\lVert g\rVert_{\infty} \Big\}+1$, there exists $\eps_0>0$ such that
	the function $w_0$ verifies 
	\begin{equation}
	\label{DPP2}
	\left\lbrace
	\begin{array}{ll}
	\displaystyle w_0 (x) \geq \max 
	\Big\{J_1(w_0)(x),J_2(w_0)(x)\Big\}+K\eps^2, \qquad &  \ x \in \Omega,  \\[12pt]
	\displaystyle w_0(x)\geq M ,  \qquad & \ x \in\R^N\backslash \Omega,
	\end{array}
	\right.
	\end{equation}
	for every $\eps < \eps_0$.
\end{Lemma}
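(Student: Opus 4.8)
The plan is to verify the two inequalities in \eqref{DPP2} separately. The inequality on $\R^N\setminus\Omega$ is immediate and requires no $\eps$-smallness: there $w_0=-z_0$, and since $\Omega\subset\subset B_R(0)$ the definition of $z_0$ gives $z_0\le -M$ off $\Omega$ (it equals $-M$ outside $B_R(0)$ and is at most $-M$ on $\overline{B_R(0)}\setminus\Omega$, by the choice of $M$), hence $w_0\ge M$ there. So the real content is the interior inequality, and there I would simply reproduce the computation used to prove $(z_0,z_0)\in\mathscr{A}$, with the sign of the obstacle term flipped.

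For $x\in\Omega$ the first step is to record that $w_0$ restricted to $\overline{\Omega}$ is a fixed polynomial and, because $0\notin\overline{\Omega}$, its gradient $\nabla w_0(x)=-4Kx$ satisfies $|\nabla w_0(x)|\ge 4K\,\mathrm{dist}(0,\overline{\Omega})>0$ uniformly on $\overline{\Omega}$. Consequently the Taylor expansions \eqref{asymp-1} and \eqref{asymp-2} apply to $w_0$ with a remainder that is $o(\eps^2)$ uniformly in $x\in\overline{\Omega}$. Substituting them into the definitions of $J_1$ and $J_2$ yields
$$J_1(w_0)(x)-w_0(x)=\big(\Delta^1_p w_0(x)-h_1(x)\big)\eps^2+o(\eps^2),\qquad J_2(w_0)(x)-w_0(x)=\big(\Delta^1_q w_0(x)+h_2(x)\big)\eps^2+o(\eps^2).$$
Then I would invoke the bounds already established above, $\Delta^1_p w_0(x)\le -4K$ and $\Delta^1_q w_0(x)\le -4K$, together with $\lVert h_i\rVert_\infty\le K-1<K$, to conclude that both right-hand sides are at most $-3K\eps^2+o(\eps^2)$. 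Picking $\eps_0>0$ small enough that $|o(\eps^2)|\le 2K\eps^2$ for every $\eps<\eps_0$ (uniformly in $x$), we get $J_i(w_0)(x)-w_0(x)\le -K\eps^2$ for $i=1,2$, which is exactly $w_0(x)\ge\max\{J_1(w_0)(x),J_2(w_0)(x)\}+K\eps^2$.

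I do not expect a genuine obstacle here: this is the computation of the first lemma with the obstacle sign reversed, and the extra $K\eps^2$ slack is absorbed because $\Delta^1_p w_0$ and $\Delta^1_q w_0$ are strictly below $-3K$ after accounting for $h_i$. The only point that deserves a line of justification is the uniformity in $x\in\Omega$ of the error terms in \eqref{asymp-1}--\eqref{asymp-2}, and in particular the non-degeneracy of $\nabla w_0$ on $\overline{\Omega}$ — both of which hold precisely because $\Omega$ is bounded away from the origin (the reason the origin was excluded from $\Omega$), so that the $\infty$-Laplacian of the radial function $w_0$ is classically defined and its expansion is valid throughout $\overline{\Omega}$.
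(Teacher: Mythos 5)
Your argument is correct and follows essentially the same route as the paper: reduce the interior inequality to the Taylor expansions of $J_1(w_0)-w_0$ and $J_2(w_0)-w_0$, use $\Delta^1_p w_0,\Delta^1_q w_0\le -4K$ and $\lVert h_i\rVert_\infty\le K-1$ to get a margin of $-3K\eps^2+o(\eps^2)$, and absorb the $o(\eps^2)$ for $\eps<\eps_0$; the exterior inequality is immediate from the definition of $w_0=-z_0$ and the choice of $M$. Your added remarks on the uniformity of the remainders and the non-degeneracy of $\nabla w_0$ on $\overline\Omega$ (which rests on $0\notin\overline\Omega$) only make explicit points the paper leaves implicit.
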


\begin{proof} First, let us observe that the inequality $w^{\eps}(x)\geq M$ holds for 
	$x\in\R^{N}\backslash\Om$ when $\tilde{M}$ is large enough. Then, we are left to prove that for $x\in\Omega$ it holds that 
	$$
	w_0(x)\geq \max\Big\{J_1(w_0)(x),J_2(w_0)(x)\Big\}+K\eps^2.
	$$
	That is, 
	\begin{equation}
	\label{rec}
	0 \geq \max\Big\{J_1(w_0)(x)-w_0(x),J_2(w_0)(x)-w_0(x)\Big\}+K\eps^2.
	\end{equation}
	Using Taylor's expansions we obtain
	\begin{equation}
	\begin{array}{l}
	\displaystyle	J_1(w_0)(x)-w_0(x)\\[8pt]
	\displaystyle	\qquad = \alpha_1\big[\half\sup_{y \in B_{\eps}(x)}(w_0(y)-w_0(x))+\half\inf_{y \in B_{\eps}(x)}(w_0(y)-w_0(x))\big]\\[8pt]
	\displaystyle \qquad \qquad +(1-\alpha_1)\kint_{B_{\eps}(x)}(w_0(y)-w_0(x))dy+\eps^2 h_1(x)\\[8pt]
	\displaystyle \qquad =\Big(\frac{\alpha_1}{2}\Delta^1_{\infty}w_0(x)+\frac{(1-\alpha_1)}{2(N+2)}\Delta w_0(x)\Big)\eps^2-\eps^2 h_1(x) +o(\eps^2).
	\end{array}
	\end{equation}
	Analogously, 
	\begin{equation}
	\begin{array}{l}
	\displaystyle	J_2(w_0)(x)-w_0(x)=\Big(\frac{\alpha_2}{2}\Delta^1_{\infty}w_0(x)+\frac{(1-\alpha_2)}{2(N+2)}\Delta w_0(x)\Big)\eps^2-\eps^2 h_2(x) +o(\eps^2)
	\end{array}
	\end{equation}
	If we come back to \eqref{rec} and we divide by $\eps^2$ we get
	\begin{equation}
	\label{maxdosop}
	0\geq\max\Big\{\Delta^1_p w_0(x)-h_1(x),\Delta^1_q w_0(x)+h_2(x)\Big\}+K.
	\end{equation}
	Using the properties of $w_0$ we arrive to
	\begin{equation}
	\Delta^1_p w_0(x)-h_1(x)\leq -4K \quad \mbox{and} \quad \Delta^1_q w_0(x)+h_2(x)\leq -3K.
	\end{equation}
	Thus, the inequality \eqref{maxdosop} holds for $\eps$ small enough. 
	This ends the proof.
\end{proof}

Our next result says that in fact the subsolutions
to the (DPP) (pairs $(\ul{u},\ul{v})\in\mathscr{A}$) are indeed bounded by $w_0$.  This shows that functions
in $\mathscr{A}$ are uniformly bounded. 
From the proof of the following result one can obtain a comparison principle for the (DPP). 

\begin{Lemma}
	\label{subsup}
	Let $(\ul{u}^{\eps},\ul{v}^{\eps})\in\mathscr{A}$ (bounded subsolutions to the (DPP) \eqref{DPP})
	and  $w^{\eps}$ a function that verifies  \eqref{DPP2}, that is, 
	$$
	\left\lbrace
	\begin{array}{ll}
	\displaystyle w^{\eps} (x) \geq \max 
	\Big\{J_1(w^{\eps})(x),J_2(w^{\eps})(x)\Big\}+K\eps^2, \qquad &  \ x \in \Omega,  \\[12pt]
	\displaystyle w^{\eps} (x)\geq M ,  \qquad & \ x \in\R^N\backslash \Omega.
	\end{array}
	\right.
	$$
	Then, it holds that
	$$
	\ul{u}^{\eps}(x)\leq w^{\eps} (x) \quad \mbox{ and } \quad \ul{v}^{\eps}(x)\leq w^{\eps}(x),
	\qquad x\in \mathbb{R}^N.
	$$
\end{Lemma}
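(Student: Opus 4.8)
\emph{Proof plan.} The plan is to argue by contradiction, exploiting the strict slack $K\eps^2$ in \eqref{DPP2} to break the tie; this is the two--equation analogue of the standard comparison argument for a single (DPP). First I would set
\[
\theta := \sup_{x\in\mathbb{R}^N}\max\Big\{\ul{u}^{\eps}(x)-w^{\eps}(x),\ \ul{v}^{\eps}(x)-w^{\eps}(x)\Big\},
\]
which is finite because $\ul{u}^{\eps},\ul{v}^{\eps}$ are bounded and $w^{\eps}$ is bounded from below on $\ol{\Omega}$ (as is the case for the function $w_0$ to which the lemma is applied). On $\mathbb{R}^N\setminus\Omega$ one has $\ul{u}^{\eps}\leq f\leq\lVert f\rVert_{\infty}\leq M-1$ and $\ul{v}^{\eps}\leq g\leq\lVert g\rVert_{\infty}\leq M-1$, while $w^{\eps}\geq M$, so there $\ul{u}^{\eps}-w^{\eps}\leq-1$ and $\ul{v}^{\eps}-w^{\eps}\leq-1$. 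Hence if $\theta\leq0$ we are done; suppose, for contradiction, that $\theta>0$. Then $\theta$ equals the supremum over $\Omega$, so there is a sequence $x_k\in\Omega$ with $\max\{\ul{u}^{\eps}(x_k)-w^{\eps}(x_k),\ \ul{v}^{\eps}(x_k)-w^{\eps}(x_k)\}\to\theta$; after passing to a subsequence the same one of the two terms realizes this maximum for every $k$, say $\ul{u}^{\eps}(x_k)-w^{\eps}(x_k)\to\theta$ (the case of the $\ul{v}^{\eps}$ term being symmetric).

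Next I would use that $(\ul{u}^{\eps},\ul{v}^{\eps})\in\mathscr{A}$, which gives $\ul{u}^{\eps}(x_k)\leq\max\{J_1(\ul{u}^{\eps})(x_k),J_2(\ul{v}^{\eps})(x_k)\}$; passing to a further subsequence, one of these two terms realizes the maximum for all $k$, say $J_1(\ul{u}^{\eps})(x_k)$ (the $J_2$ case is identical, with $\ul{u}^{\eps},\alpha_1,h_1$ replaced by $\ul{v}^{\eps},\alpha_2,h_2$). Subtracting from $\ul{u}^{\eps}(x_k)\leq J_1(\ul{u}^{\eps})(x_k)$ the inequality $w^{\eps}(x_k)\geq J_1(w^{\eps})(x_k)+K\eps^2$ of \eqref{DPP2} — the $-\eps^2h_1(x_k)$ terms cancel in the difference — and writing $B_k:=B_{\eps}(x_k)$, one gets
\[
\ul{u}^{\eps}(x_k)-w^{\eps}(x_k)\ \leq\ \alpha_1\Big[\tfrac12\big({\textstyle\sup_{B_k}}\ul{u}^{\eps}-{\textstyle\sup_{B_k}}w^{\eps}\big)+\tfrac12\big({\textstyle\inf_{B_k}}\ul{u}^{\eps}-{\textstyle\inf_{B_k}}w^{\eps}\big)\Big]+(1-\alpha_1)\kint_{B_k}(\ul{u}^{\eps}-w^{\eps})\ud y-K\eps^2 .
\]
I would then invoke the elementary bounds $\sup_{B}\phi-\sup_{B}\psi\leq\sup_{B}(\phi-\psi)$, $\inf_{B}\phi-\inf_{B}\psi\leq\sup_{B}(\phi-\psi)$ and $\kint_{B}(\phi-\psi)\leq\sup_{B}(\phi-\psi)$, valid for bounded $\phi,\psi$ on any ball $B$ with no continuity assumption, applied with $\phi=\ul{u}^{\eps}$, $\psi=w^{\eps}$; since each right--hand side is $\leq\theta$, this yields $\ul{u}^{\eps}(x_k)-w^{\eps}(x_k)\leq\alpha_1\theta+(1-\alpha_1)\theta-K\eps^2=\theta-K\eps^2$. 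Letting $k\to\infty$ gives $\theta\leq\theta-K\eps^2$, a contradiction because $K\eps^2>0$. The case in which $\theta$ is realized by $\ul{v}^{\eps}-w^{\eps}$ runs the same way, starting from $\ul{v}^{\eps}(x_k)\leq\min\{J_1(\ul{u}^{\eps})(x_k),J_2(\ul{v}^{\eps})(x_k)\}\leq J_2(\ul{v}^{\eps})(x_k)$ and subtracting $w^{\eps}(x_k)\geq J_2(w^{\eps})(x_k)+K\eps^2$. Hence $\theta\leq0$, i.e. $\ul{u}^{\eps}\leq w^{\eps}$ and $\ul{v}^{\eps}\leq w^{\eps}$ on $\mathbb{R}^N$.

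The point requiring care is that $\theta$ need not be attained, so one must work along maximizing sequences $(x_k)$ rather than at a maximum point, and keep track of the sup/inf terms on the moving balls $B_{\eps}(x_k)$ (which may poke outside $\Omega$, but this is harmless since $\theta$ is the supremum over all of $\mathbb{R}^N$); the slack $K\eps^2$ in \eqref{DPP2} is precisely the ingredient that defeats the limiting value $\theta$, which is why it was built into \eqref{DPP2} and Lemma \ref{lema.1}. I also expect that the very same computation, run with $(\ul{u}^{\eps},\ul{v}^{\eps})$ replaced by the difference of two solutions of the (DPP) and $w^{\eps}$ by a suitable small multiple of $z_0$ (or of $w_0$), yields a comparison principle and, in particular, uniqueness of solutions to \eqref{DPP} — which is presumably how the excerpt intends to use it.
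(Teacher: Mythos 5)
Your proposal is correct and follows essentially the same route as the paper: a contradiction argument based on the supremum $\theta$ of the two differences, a maximizing sequence in $\Omega$, the subsolution/supersolution inequalities from $\mathscr{A}$ and \eqref{DPP2}, and the $K\eps^2$ slack to defeat the limiting value $\theta$. The only (harmless) difference is that you bound all three terms uniformly by $\sup_{B_{\eps}(x_k)}(\phi-\psi)\leq\theta$, whereas the paper extracts a near-maximizer $y_n$ for the $\sup$ term before reaching the same contradiction.
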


\begin{proof}
	We argue by contradiction. Assume that 
	$$
	\max \Big\{\sup (\ul{u}^{\eps}-w^{\eps}),  \sup(\ul{v}^{\eps}-w^{\eps}) \Big\}=\theta > 0.
	$$
	It is clear that
	$$
	\ul{u}^{\eps}(x)\leq M \leq w^{\eps} (x), \qquad \ul{v}^{\eps}(x) \leq M \leq w^{\eps}(x) .
	$$
	for $x\not \in \Omega$. Therefore, we have to concentrate in what 
	happens inside $\Omega$. We divide the proof into two cases.
	
	\textbf{First case.} Assume that $$\sup(\ul{v}^{\eps}-w^{\eps})=\theta. $$ 
	Given $n\in\mathbb{N}$ let $x_n\in\Om$ be such that 
	$$
	\theta - \dfrac{1}{n} < (\ul{v}^{\eps}-w^{\eps})(x_n).
	$$
	
	We use the inequalities verified 
	by the involved functions to obtain	
	$$
	\begin{array}{l}
	\displaystyle 
	\theta - \dfrac{1}{n}<(\ul{v}^{\eps}-w^{\eps})(x_n)\leq J_2(\ul{v}^{\eps})(x_n)-J_2(w^{\eps})(x_n) 
	\\[10pt]
	\displaystyle =\alpha_2\big[\half\sup_{y \in B_{\eps}(x_n)}\ul{v}^{\eps}(y)+\half\inf_{y \in B_{\eps}(x_n)}\ul{v}^{\eps}(y)-\half\sup_{y \in B_{\eps}(x_n)}w^{\eps}(y)-\half\inf_{y \in B_{\eps}(x_n)}w^{\eps}(y)\big]\\[10pt]
	\displaystyle \qquad +(1-\alpha_2)\kint_{B_{\eps}(x_n)}(\ul{v}^{\eps}-w^{\eps})(y)dy+\eps^2 h_2(x_n)-\eps^2 K\\[10pt]
	\displaystyle \leq \alpha_2\big[\half\sup_{y \in B_{\eps}(x_n)}\ul{v}^{\eps}(y)-\half\sup_{y \in B_{\eps}(x_n)}w^{\eps}(y)+\half\sup_{y \in B_{\eps}(x_n)}(\ul{v}^{\eps}-w^{\eps})(y)\big]\\[10pt]
	\displaystyle \qquad +(1-\alpha_2)\kint_{B_{\eps}(x_n)}(\ul{v}^{\eps}-w^{\eps})(y)dy+\eps^2h_2(x_n)-\eps^2 K.
	\end{array}
	$$
	Now we use that  
	$$
	\sup_{y \in B_{\eps}(x_n)}(\ul{v}^{\eps}-w^{\eps})(y)\leq\theta \quad \mbox{and} \quad \kint_{B_{\eps}(x_n)}(\ul{v}-w)(y)dy\leq \theta
	$$ 
	to arrive to 
	$$
	\theta-\dfrac{2}{n\alpha_2}<\sup_{y \in B_{\eps}(x_n)}\ul{v}^{\eps}(y)-\sup_{y \in B_{\eps}(x_n)}w^{\eps}(y)+(2h_2(x_n)-2K)\frac{\eps^2}{\alpha_2}.
	$$
	Take $y_n\in B_{\eps}(x_n)$ such that $$\sup_{y \in B_{\eps}(x_n)}\ul{v}^{\eps}(y)-\dfrac{1}{n}<\ul{v}^{\eps}(y_n).$$
	Then, we get
	\begin{equation}
		\begin{array}{ll}
		\displaystyle	\theta-\dfrac{2}{n\alpha_2}<\ul{v}^{\eps}(y_n)+\dfrac{1}{n}-\sup_{y \in B_{\eps}(x_{\eps})}w^{\eps}(y)(2h_2(x_n)-2K\eps^2)\\
			\displaystyle\leq \ul{v}^{\eps}(y_n)+\dfrac{1}{n}- w^{\eps}(y_n)-\frac{\eps^2}{\alpha_2}.
		\end{array}
	\end{equation}
	Here we use that $h_2(x)-K\leq -1$. Hence
	$$
	\theta-\dfrac{2-\alpha_2}{n\alpha_2}+\frac{\eps^2}{\alpha_2}< (\ul{v}^{\eps}- w^{\eps})(y_n)\leq \theta 
	$$
	that leads to a contradiction if $n\in\mathbb{N}$ is large enough
	in order to have $$-\dfrac{2-\alpha_2}{n\alpha_2}+\frac{\eps^2}{\alpha_2}>0$$ since in this case we 
	obtain
	$$
	\theta<\theta-\dfrac{2-\alpha_2}{n\alpha_2}+\frac{\eps^2}{\alpha_2}< (\ul{u}^{\eps}- w^{\eps})(y_n)\leq \theta.
	$$
	This ends the proof in the first case.
	
	\textbf{Second case.} Assume that $$\sup(\ul{u}^{\eps}-w^{\eps})=\theta.$$ 
	In this case we take again 
	a sequence $x_n\in\Om$ such that $$\theta-\dfrac{1}{n}<(\ul{u}^{\eps}-w^{\eps})(x_n).$$
	Let us assume first that $$\max \Big\{J_1(\ul{u}^{\eps})(x_n),J_2(\ul{v}^{\eps})(x_n)\Big\}=J_2(\ul{v}^{\eps})(x_n),$$
	then we obtain 
	\begin{equation}
	(\ul{u}^{\eps}-w^{\eps})(x_n)\leq J_2(\ul{v}^{\eps})(x_n)-J_2(w^{\eps})(x_n),
	\end{equation} 
	we are again in the first case and we arrive to a contradiction arguing as before.
	
	Finally, let us assume that $$\max \Big\{J_1(\ul{u}^{\eps})(x_n),J_2(\ul{v}^{\eps})(x_n)\Big\}=J_1(\ul{u}^{\eps})(x_n),$$
	then we obtain 
	\begin{equation}
	(\ul{u}^{\eps}-w^{\eps})(x_n)\leq J_1(\ul{u}^{\eps})(x_n)-J_1(w^{\eps})(x_n),
	\end{equation}
	and if we argue as in the first case we arrive to a contradiction. This ends the proof. 
\end{proof}

Now, using that $w_0$ is continuous $\R^N$ and hence bounded in the ball $\overline{B_R}$, 
we can deduce that there exists a constant $\Lambda>0$ 
that depends on the data $f$, $g$, $h$ and the domain $\Omega$ such that $w_0(x)\leq\Lambda$. 
Then, using the previous lemmas we obtain a uniform bound for 
functions in $\A$. 

\begin{theorem} \label{th.cota.uniforme}
	There exists a constant $\Lambda>0$ that depends on $f$,$g$,$h$ and $\Omega$ 
	such that for every $(\ul{u}^{\eps},\ul{v}^{\eps})\in \A$ it holds that
	$$
	\ul{u}^{\eps} (x) \leq \Lambda \qquad \mbox{and} \qquad \ul{v}^{\eps}(x) \leq \Lambda,
	$$
	for every $x \in \mathbb{R}^N$ and every $\eps \leq \eps_0$ (here $\eps_0$ is given
	by Lemma \ref{lema.1}). 
\end{theorem}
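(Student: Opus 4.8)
The statement is a corollary of the two preceding lemmas: the plan is to use the explicit function $w_0=-z_0$ as an $\eps$-independent barrier that dominates every pair in $\mathscr{A}$, and then to note that $w_0$ is bounded above by a constant depending only on the data and the domain. Throughout I would keep the constants $K=\max\{\lVert h_1\rVert_{\infty},\lVert h_2\rVert_{\infty}\}+1$ and $M=\max\{\lVert f\rVert_{\infty},\lVert g\rVert_{\infty}\}+1$ fixed, exactly as in Lemma \ref{lema.1}.

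First I would record that, by Lemma \ref{lema.1}, there is $\eps_0>0$ such that for every $\eps<\eps_0$ the function $w_0$ satisfies \eqref{DPP2}, i.e.
\[
w_0(x)\ \ge\ \max\bigl\{J_1(w_0)(x),\,J_2(w_0)(x)\bigr\}+K\eps^2 \ \ \text{in } \Om,
\qquad w_0\ge M \ \text{ on } \R^N\setminus\Om .
\]
Hence $w_0$ is an admissible choice for the function $w^{\eps}$ in the hypothesis of Lemma \ref{subsup}. Applying that lemma to an arbitrary pair $(\ul{u}^{\eps},\ul{v}^{\eps})\in\mathscr{A}$ with $w^{\eps}=w_0$ gives, for every $\eps\le\eps_0$,
\[
\ul{u}^{\eps}(x)\le w_0(x)\qquad\text{and}\qquad \ul{v}^{\eps}(x)\le w_0(x),\qquad x\in\R^N .
\]
To conclude, I would invoke the bound stated just before the theorem: since $w_0$ is continuous on $\R^N$ and equals the constant $-M$ outside $B_R(0)$, it attains a finite maximum $\Lambda>0$ on $\R^N$; reading off the (radial, explicit) formula for $w_0$ on $\overline{B_R(0)}$ one obtains such a $\Lambda$ explicitly in terms of $K$, $M$ and $R$, a quantity that therefore depends only on $\lVert h_i\rVert_{\infty}$, $\lVert f\rVert_{\infty}$, $\lVert g\rVert_{\infty}$ and $R$, hence on $f$, $g$, $h$ and $\Om$. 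Combining the two displays yields $\ul{u}^{\eps}\le\Lambda$ and $\ul{v}^{\eps}\le\Lambda$ on all of $\R^N$ for every $\eps\le\eps_0$.

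There is essentially no obstacle here beyond bookkeeping: the two points that require attention are (a) checking that $w_0$ genuinely verifies the hypothesis \eqref{DPP2} of Lemma \ref{subsup}, which is precisely the content of Lemma \ref{lema.1}, and (b) making sure the bound is uniform in $\eps$. The latter is immediate because $w_0$, and therefore $\Lambda$, do not depend on $\eps$, and the extra term $K\eps^2$ in \eqref{DPP2} sits on the favorable side of the inequality, so shrinking $\eps$ only makes $w_0$ \emph{more of a supersolution}. In short, this theorem is the easy (one-sided) half of a comparison principle for the (DPP), obtained essentially for free once the barrier $w_0$ and the comparison Lemma \ref{subsup} are in place.
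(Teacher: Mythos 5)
Your proof is correct and follows exactly the paper's route: Lemma \ref{lema.1} certifies that $w_0$ satisfies \eqref{DPP2}, Lemma \ref{subsup} then gives $\ul{u}^{\eps},\ul{v}^{\eps}\le w_0$, and the boundedness of $w_0$ yields $\Lambda$. The paper's own proof is a one-line invocation of the same three ingredients, so your version simply spells out the details.
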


\begin{proof}
	It follows from Lemma \ref{lema.1}, Lemma \ref{subsup} and the boundedness of $w_0$.
\end{proof}

With this result at hand we can define for $x \in \mathbb{R}^N$,
\begin{equation} \label{eq.supremos}
u^{\eps}(x)=\sup_{(\ul{u}^{\eps},\ul{v}^{\eps})\in\A}\ul{u}^{\eps}(x) \qquad \mbox{and} \qquad
v^{\eps}(x)=\sup_{(\ul{u}^{\eps},\ul{v}^{\eps})\in\A}\ul{v}^{\eps}(x).
\end{equation}

The previous result, Theorem \ref{th.cota.uniforme}, gives that these two functions
$u^{\eps}$ and $v^{\eps}$ are well defined and bounded. It turns out that they are
a solution to the (DPP). 

\begin{theorem} \label{existencia.DPP}
	The pair of functions $(u^{\eps},v^{\eps})$ is a solution to the (DPP), \eqref{DPP}. 
\end{theorem}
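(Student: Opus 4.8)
The plan is to show that the two functions defined as suprema of subsolutions in \eqref{eq.supremos} actually satisfy the equalities in \eqref{DPP}, not just the inequalities. The argument has the usual Perron structure: one inequality ($\leq$) is essentially immediate from the definition of $\mathscr{A}$, and the reverse inequality ($\geq$) follows by showing that if one of the equalities failed at some point, we could perturb the pair into a strictly larger element of $\mathscr{A}$, contradicting maximality. First I would record that $(u^{\eps},v^{\eps})$ is itself in $\mathscr{A}$: for each fixed $x\in\Omega$ and each $(\ul u^{\eps},\ul v^{\eps})\in\mathscr{A}$ we have $\ul u^{\eps}(x)\leq \max\{J_1(\ul u^{\eps})(x),J_2(\ul v^{\eps})(x)\}\leq \max\{J_1(u^{\eps})(x),J_2(v^{\eps})(x)\}$, using that $J_1,J_2$ are monotone in the function argument (they are built from $\sup$, $\inf$ and averages) together with $\ul u^{\eps}\leq u^{\eps}$, $\ul v^{\eps}\leq v^{\eps}$ pointwise; taking the supremum over $\mathscr{A}$ on the left gives $u^{\eps}(x)\leq \max\{J_1(u^{\eps})(x),J_2(v^{\eps})(x)\}$, and similarly for $v^{\eps}$. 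The boundary inequalities $u^{\eps}\leq f$, $v^{\eps}\leq g$ outside $\Omega$ pass to the supremum trivially. Hence $(u^{\eps},v^{\eps})\in\mathscr{A}$, and in particular the two "$\leq$" relations in \eqref{DPP} hold.

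Next I would establish the reverse inequalities. Suppose, for contradiction, that at some $x_0\in\Omega$ we have strict inequality, say $u^{\eps}(x_0) < \max\{J_1(u^{\eps})(x_0),J_2(v^{\eps})(x_0)\}$ (the case of $v^{\eps}$ with $\min$ is handled the same way). Define a new pair $(\tilde u^{\eps},\tilde v^{\eps})$ by raising the value of $u^{\eps}$ at $x_0$ up to (a value slightly below, by a small $\delta>0$) the right-hand side while leaving $v^{\eps}$ unchanged and leaving $u^{\eps}$ unchanged off $x_0$. Because $J_1,J_2$ are continuous with respect to changing the function on a null set and, more to the point, raising $u^{\eps}$ at a single point can only increase $\sup_{B_\eps(y)}u^{\eps}$ and the average $\kint_{B_\eps(y)}u^{\eps}$ while possibly increasing $\inf_{B_\eps(y)}u^{\eps}$, all the operators $J_1(\tilde u^{\eps})(y)$ are $\geq J_1(u^{\eps})(y)$ for every $y$; so the first and second subsolution inequalities of \eqref{DPPSup} continue to hold at every point $y\neq x_0$, and at $y=x_0$ the new inequality holds by construction provided $\delta$ is small. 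The boundary conditions are untouched. Thus $(\tilde u^{\eps},\tilde v^{\eps})\in\mathscr{A}$ with $\tilde u^{\eps}(x_0)>u^{\eps}(x_0)$, contradicting the definition of $u^{\eps}$ as the supremum over $\mathscr{A}$. Therefore $u^{\eps}(x_0)=\max\{J_1(u^{\eps})(x_0),J_2(v^{\eps})(x_0)\}$ for all $x_0\in\Omega$, and likewise $v^{\eps}(x_0)=\min\{J_1(u^{\eps})(x_0),J_2(v^{\eps})(x_0)\}$. Finally, for the boundary values, on $\R^N\setminus\Omega$ we already have $u^{\eps}\leq f$; and since the constant-type comparison function built from $z_0$ (or rather the pair $(z_0,z_0)\in\mathscr{A}$, after shifting so it equals $f$ on the relevant boundary portion) can be used as a competitor forcing $u^{\eps}\geq f$ there, we get $u^{\eps}=f$ outside $\Omega$, and similarly $v^{\eps}=g$; one should check the competitor construction matches the exterior data exactly, which is where the explicit form of $z_0$ and the freedom in the additive constant $M$ come in.

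The step I expect to be the main obstacle is the perturbation argument raising $u^{\eps}$ at $x_0$: one has to be careful that bumping $u^{\eps}$ up at a single point does not break the second inequality $v^{\eps}\leq \min\{J_1(u^{\eps}),J_2(v^{\eps})\}$ — but since the perturbation only \emph{increases} $J_1(u^{\eps})$ pointwise, the $\min$ can only increase, so that inequality is preserved; and one must verify the increment $\delta$ can be chosen uniformly enough that $\tilde u^{\eps}$ still lies below the (possibly now slightly larger) right-hand side at $x_0$, which follows because the right-hand side at $x_0$ depends on $u^{\eps}$ only through averages and sup/inf over $B_\eps(x_0)$, quantities that change by at most $O(\delta)$ (indeed by at most $\delta$ times the relevant weights) under the bump. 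A cleaner alternative, which I would actually adopt to avoid fuss about single-point modifications, is to raise $u^{\eps}$ on a small ball $B_\rho(x_0)$ by a continuous bump of height $\delta$ and let $\rho,\delta\to 0$; continuity of $J_1,J_2$ in the sup-norm then makes the estimates transparent. Either way the argument is a routine Perron closing argument once the monotonicity of $J_1,J_2$ is in hand, so I do not anticipate a genuinely hard point, only bookkeeping.
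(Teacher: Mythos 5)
Your argument for the interior equalities is exactly the paper's: show $(u^{\eps},v^{\eps})\in\mathscr{A}$ by monotonicity of $J_1,J_2$ and taking suprema, then close the Perron argument by bumping $u^{\eps}$ (or $v^{\eps}$) up at a single point $x_0$ where a strict inequality would hold, observing that the bump can only increase $J_1,J_2$ elsewhere and so produces a competitor in $\mathscr{A}$ exceeding the supremum. That part is correct and matches the paper, including your observation that the $\min$-inequality for $v^{\eps}$ is preserved because $J_1(u^{\eps})$ only increases.

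The one step that would fail as written is your treatment of the boundary equality $u^{\eps}=f$ on $\R^N\setminus\Omega$. You propose using $(z_0,z_0)$ ``after shifting so it equals $f$ on the relevant boundary portion'' as a competitor from below. But $z_0$ is a fixed radial function equal to the constant $-M$ off $B_R(0)$, and no additive shift makes it coincide with a general (non-constant) Lipschitz datum $f$ outside $\Omega$; the pair $(z_0,z_0)$ only yields the useless lower bound $u^{\eps}\geq -M$. There is no need for any such competitor: the paper simply reapplies your own single-point bump at an exterior point. If $u^{\eps}(x_0)<f(x_0)$ for some $x_0\in\R^N\setminus\Omega$, raise $u^{\eps}$ at $x_0$ by $\delta$ small enough that $u^{\eps}(x_0)+\delta<f(x_0)$; the exterior inequality still holds at $x_0$, and the interior inequalities at points of $\Omega$ can only improve since $J_1$ applied to the bumped function is pointwise larger. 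This contradicts $u^{\eps}(x_0)=\sup_{\mathscr{A}}\ul{u}^{\eps}(x_0)$, and the same works for $v^{\eps}$ and $g$. With that substitution your proof is complete and coincides with the paper's.
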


\begin{proof}
	First, let us show that $(u^{\eps},v^{\eps})\in\A$. Given $(\ul{u}^{\eps},\ul{v}^{\eps})\in\A$ and $x\in \Omega$ 
	we have that
	$$
	\ul{u}^{\eps}(x)\leq\max\Big\{J_1(\ul{u}^{\eps})(x),J_2(\ul{v}^{\eps})(x)\Big\}. 
	$$
	Taking supremum for $(\ul{u}^{\eps}, \ul{v}^{\eps}) \in \A$ we obtain
	$$
	\ul{u}^{\eps}(x)\leq\max\Big\{J_1(u^{\eps})(x),J_2(v^{\eps})(x)\Big\}
	$$	
	and hence (taking supremum in the left hand side) we conclude that 
	$$
	u^{\eps}(x)\leq\max\Big\{J_1(u^{\eps})(x),J_2(v^{\eps})(x)\Big\}
	$$			
	
	An analogous computation for the second equation shows that $v^{\eps}$
	verifies 
	$$
	v^{\eps}(x)\leq \min\Big\{J_1(u^{\eps})(x),J_2(v^{\eps})(x)\Big\} 
	$$
	for $x \in \Omega$. Finally, as 
	$\ul{u}^{\eps}(x) \leq f(x)$ and $\ul{v}^{\eps}(x) \leq g(x)$ for $x \in \R^{N} \backslash \Omega$,  
	taking supremum we obtain 
	$u^{\eps}(x) \leq f(x)$ and $v^{\eps}(x) \leq g(x)$ for $x \in \R^{N} \backslash \Omega$ and we conclude that
	$(u^{\eps},v^{\eps})\in\A$. 
	
	We have a set of inequalities for the pair $(u^{\eps},v^{\eps})\in\A$. To show that the pair is
	indeed a solution to the DPP we have to show that they are in fact equalities. 
	To prove this fact we argue by contradiction. Assume that we have a strict 
	inequality for some $x_0\in\R^N$. If $ x_0\in \R^N\backslash\Om$
	and we have $u^{\eps}(x_0) < f(x_0)$ we will reach a contradiction
	considering 
	$$
	u^{\eps}_0(x)=\left\lbrace
	\begin{array}{ll}
	\displaystyle u^{\eps}(x) \qquad & \ \mbox{if } \ x \neq x_0,  \\[7pt]
	\displaystyle u^{\eps}(x_0)+\delta \qquad & \ \mbox{if } \ x=x_0,
	\end{array}
	\right.
	$$
	with $\delta>0$ small enough such that 
	$u^{\eps}(x_0)+\delta < f(x_0)$. Indeed, 
	one can check that the pair $(u^{\eps}_0, v^{\eps})$ belongs to $\A$ but at $x_0$ we have
	$u^{\eps}_0 (x_0) = u^{\eps}(x_0)+\delta > u^{\eps}(x_0) = \sup_{\A} \ul{u}^{\eps} (x_0)$,
	a contradiction. A similar argument can be used when $ x_0\in \R^N\backslash\Om$
	and we have $v^{\eps}(x_0) < g(x_0)$. We conclude that $u^{\eps}(x) = f(x)$ and $v^{\eps}(x) = g(x)$ for every $x\in \R^N\backslash\Om$.
	
	Now, let us assume that the point at which we have a strict inequality is inside $\Omega$, 
	$x_0\in\Om$. First, assume that we have
	$$
	u^{\eps}(x_0)<\max\Big\{J_1(u^{\eps})(x_0),J_2(v^{\eps})(x_0)\Big\}.
	$$	
	Let us consider
	$$
	\max\Big\{J_1(u^{\eps})(x_0),J_2(v^{\eps})(x_0)\Big\}-u^{\eps}(x_0)=\delta>0,
	$$	
	and, as before, the function
	$$
	u^{\eps}_0(x)=\left\lbrace
	\begin{array}{ll}
	\displaystyle u^{\eps}(x) \qquad & \ \mbox{if } \ x \neq x_0,  \\[7pt]
	\displaystyle u^{\eps}(x_0)+\frac{\delta}{2} \qquad & \ \mbox{if } \ x=x_0.  
	\end{array}
	\right.
	$$	
	Then, we have
	$$
	u^{\eps}_0(x_0)=u^{\eps}(x_0)+\frac{\delta}{2}<\max\Big\{J_1(u^{\eps})(x_0),J_2(v^{\eps})(x_0)\Big\},
	$$	
	At other points $x\in \Omega$ we also have 
	$$
	u^{\eps}_0(x) \leq \max\Big\{J_1(u^{\eps})(x),J_2(v^{\eps})(x)\Big\}\leq\max\Big\{J_1(u_0^{\eps})(x),J_2(v^{\eps})(x)\Big\}.
	$$
	Finally, concerning $v^\eps$ we get (at any point $x\in \Omega$),
	$$
	v^{\eps}(x)\leq \min\Big\{J_1(u_0^{\eps})(x),J_2(v^{\eps})(x)\Big\} .
	$$
	Hence, we have that the pair $(u^{\eps}_0, v^{\eps})$ belongs to $\A$, getting a contradiction as before.
	since $u_{0}^{\eps}(x_0) >u^{\eps} (x_0)$.

	Analogously, one can deal with the case in which $x_0 \in \Omega$ and  
	$$
	v^{\eps}(x_0)<\min\Big\{J_1(u^{\eps})(x_0),J_2(v^{\eps})(x_0)\Big\}.
	$$
	The proof is finished. 
\end{proof}

\begin{corol} \label{corl-acot}
	There exists a constant $\Lambda>0$ such that 
	\begin{equation}
	|u^{\eps}(x)|<\Lambda \qquad \mbox{and} \qquad |v^{\eps}(x)|<\Lambda 
	\end{equation} 
	for all $x\in\R^{N}$.
\end{corol}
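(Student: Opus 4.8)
The plan is to combine the upper bound already established in Theorem \ref{th.cota.uniforme} with a matching lower bound that comes for free from the construction of $u^{\eps}$ and $v^{\eps}$ as suprema of subsolutions. First I would recall that Theorem \ref{th.cota.uniforme} directly provides $u^{\eps}(x)\leq\Lambda$ and $v^{\eps}(x)\leq\Lambda$ for every $x\in\R^N$ and every $\eps\leq\eps_0$, with $\Lambda$ depending only on $f$, $g$, $h_1$, $h_2$ and $\Omega$. Hence only the lower bound remains to be proven.

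For the lower bound the key observation is that in the proof that $\A\neq\emptyset$ we showed $(z_0,z_0)\in\A$. By the very definition \eqref{eq.supremos} of $u^{\eps}$ and $v^{\eps}$ as the pointwise supremum over pairs in $\A$, this immediately gives
\begin{equation}
u^{\eps}(x)\geq z_0(x)\qquad\text{and}\qquad v^{\eps}(x)\geq z_0(x)\qquad\text{for all }x\in\R^N.
\end{equation}
Now $z_0$ is continuous on $\R^N$: it equals the quadratic $2K(|x|^2-R)-M$ on the compact ball $\overline{B_R(0)}$ and equals the constant $-M$ outside, so it is bounded below by some constant $-C$ that depends only on $K$, $M$ and $R$, i.e. only on the data $f$, $g$, $h_1$, $h_2$ and $\Omega$. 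Consequently $u^{\eps}(x)\geq -C$ and $v^{\eps}(x)\geq -C$ for all $x\in\R^N$ and all $\eps\leq\eps_0$. Note that $z_0$ does not depend on $\eps$, so this bound is automatically uniform in $\eps$.

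Combining the two bounds and replacing $\Lambda$ by $\max\{\Lambda,C\}+1$ (still depending only on the data and $\Omega$, and the $+1$ turning the non-strict inequalities into the strict ones in the statement), we obtain $|u^{\eps}(x)|<\Lambda$ and $|v^{\eps}(x)|<\Lambda$ for all $x\in\R^N$, which is the assertion of Corollary \ref{corl-acot}. I do not expect any genuine obstacle here; the only thing worth double-checking is the uniformity of the constant in $\eps$, which is clear since both the barrier $z_0$ and the bound in Theorem \ref{th.cota.uniforme} are already uniform for $\eps\leq\eps_0$.
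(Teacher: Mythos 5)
Your proposal is correct and follows the same basic strategy as the paper, but it is actually more complete. The paper's own proof of this corollary is a single line ("every solution to the (DPP) belongs to $\mathscr{A}$, hence the result follows from Theorem \ref{th.cota.uniforme}"), which strictly speaking only justifies the one-sided bound $u^{\eps},v^{\eps}\leq\Lambda$, since that theorem controls subsolutions only from above. You correctly identify that the lower bound needs a separate argument and supply it: since $(z_0,z_0)\in\mathscr{A}$ and $u^{\eps},v^{\eps}$ are defined in \eqref{eq.supremos} as pointwise suprema over $\mathscr{A}$, one gets $u^{\eps}\geq z_0$ and $v^{\eps}\geq z_0$, and $z_0$ is bounded below (by $-2KR-M$) independently of $\eps$. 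This is exactly the missing half, and your observation that the barrier $z_0$ is $\eps$-independent is the right point to check for uniformity. The only minor caveat is that your lower bound is tied to the particular pair constructed as suprema; for an arbitrary solution of the (DPP) one would instead invoke the uniqueness established afterwards (or run a symmetric comparison from below), but in the context where the corollary is used this is immaterial.
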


\begin{proof}
	Every solution to the (DPP) belongs to $\mathcal{A}$. Hence the result follows from
	 Theorem \ref{th.cota.uniforme}. 
\end{proof}

Now, for completeness, we include the precise statement of the 
Optional Stopping Theorem (a key tool from probability theory that we will use in what follows).

{\bf The Optional Stopping Theorem.}
We briefly recall (see \cite{Williams}) that a sequence of random variables
$\{M_{k}\}_{k\geq 1}$ is a supermartingale (submartingale) if
$$ \E[M_{k+1}\arrowvert M_{0},M_{1},...,M_{k}]\leq M_{k} \ \ (\geq).$$
Then, the Optional Stopping Theorem, that we will call {\it (OSTh)} in what follows, says:
given $\tau$ a stopping time such that one of the following conditions hold,
\begin{itemize}
	\item[(a)] The stopping time $\tau$ is bounded almost surely;
	\item[(b)] It holds that $\E[\tau]<\infty$ and there exists a constant $c>0$ such that $$\E[M_{k+1}-M_{k}\arrowvert M_{0},...,M_{k}]\leq c;$$
	\item[(c)] There exists a constant $c>0$ such that $|M_{\min \{\tau,k\}}|\leq c$ almost surely for every $k$.
\end{itemize}
Then 
$$ \E[M_{\tau}]\leq \E [M_{0}] \ \ (\geq)$$
if $\{M_{k}\}_{k\geq 0}$ is a supermartingale (submartingale).
For the proof of this classical result we refer to \cite{Doob,Williams}. 

Let us finish this section proving the following theorem.

\begin{theorem} \label{teo.5}
	The functions $u^{\eps}$ and $v^{\eps}$ that verifies the (DPP) \eqref{DPP} are the functions that give the value of the game in \eqref{value}. This means, the function
	\begin{equation}
	w^{\eps}(x,j)=\inf_{S_{II}}\sup_{S_{I}}\E_{S_{I},S_{II}}^{(x,j)}[\mbox{total payoff}]=\sup_{S_{I}}\inf_{S_{II}}\E_{S_{I},S_{II}}^{(x,j)}[\mbox{total payoff}]
	\end{equation}
	verifies that $$w^{\eps}(x,1)=u^{\eps}(x)$$ and $$w^{\eps}(x,2)=v^{\eps}(x)$$ 
	for any pair $(u^\eps,v^\eps)$ that solves the (DPP), that is, for any pair that verifies
	\begin{equation}
	\left\lbrace
	\begin{array}{ll}
	\displaystyle u^{\eps}(x)=\max\Big\{ J_1(u^{\eps})(x), J_2(v^{\eps})(x)\Big\},
	\qquad 
	&  x \in \Omega,  \\[10pt]
	\displaystyle u^{\eps}(x)=\min\Big\{ J_1(u^{\eps})(x), J_2(v^{\eps})(x)\Big\}, 
	&  x \in \Omega,  \\[10pt]
	u^{\eps}(x) = f(x), \qquad & x \in \R^{N} \backslash \Omega,  \\[10pt]
	v^{\eps}(x) = g(x), \qquad &  x \in \R^{N} \backslash \Omega.
	\end{array}
	\right.
	\end{equation}
\end{theorem}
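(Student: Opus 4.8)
\textbf{Proof proposal for Theorem \ref{teo.5}.}

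The plan is to show the two standard inequalities
\[
\sup_{S_I}\inf_{S_{II}}\E^{(x,j)}_{S_I,S_{II}}[\text{total payoff}]\ \ge\ w^\eps(x,j)\qquad\text{and}\qquad
\inf_{S_{II}}\sup_{S_I}\E^{(x,j)}_{S_I,S_{II}}[\text{total payoff}]\ \le\ w^\eps(x,j),
\]
where $w^\eps(x,1):=u^\eps(x)$, $w^\eps(x,2):=v^\eps(x)$ is the solution to the (DPP). Since trivially $\sup_{S_I}\inf_{S_{II}}\le\inf_{S_{II}}\sup_{S_I}$, the two inequalities together pin both quantities to $w^\eps$ and show the game has a value. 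The tool for each inequality is to build an almost-optimal strategy for one player and run the Optional Stopping Theorem on a suitable (super/sub)martingale obtained by evaluating $w^\eps$ along the game trajectory and subtracting the accumulated running payoff.

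First I would prove the lower bound. Fix $\eta>0$. Player I plays the following strategy $S_I^0$: when the token is at $(x_k,j_k)$, if $j_k=1$ Player I compares $J_1(u^\eps)(x_k)$ and $J_2(v^\eps)(x_k)$ and chooses to stay (if the max is $J_1$) or to change boards (if the max is $J_2$); if $j_k=2$ he cannot choose the board, and in either board, whenever the Tug-of-War coin favors him, he moves to a point $y\in \overline{B_\eps(x_k)}$ with $w^\eps(y,j_{k+1})\ge \sup_{B_\eps(x_k)}w^\eps(\cdot,j_{k+1})-\eta 2^{-k}$ (the index $j_{k+1}$ of the board actually played being determined by whoever makes the board-choice at that turn). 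Let $S_{II}$ be arbitrary. Consider
\[
M_k \;=\; w^\eps(x_k,j_k)\;-\;\eps^2\sum_{i=0}^{k-1}\Big(h_1(x_i)\chi_{\{1\}}(j_{i+1})-h_2(x_i)\chi_{\{2\}}(j_{i+1})\Big)\;-\;\eta\sum_{i=0}^{k-1}2^{-i}.
\]
Using the (DPP) equalities $u^\eps(x)=\max\{J_1(u^\eps),J_2(v^\eps)\}$ and $v^\eps(x)=\min\{J_1(u^\eps),J_2(v^\eps)\}$, together with the fact that in board $1$ Player I selects the board realizing the max while in board $2$ the quantity $w^\eps(x_k,2)=v^\eps(x_k)=\min\{\dots\}$ is $\le$ either branch (so no matter which board Player II forces, the inequality only helps), and the $\eta 2^{-k}$ slack in the Tug-of-War pointing, one checks $\E[M_{k+1}\mid M_0,\dots,M_k]\ge M_k$, i.e.\ $\{M_k\}$ is a submartingale. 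Boundedness of $w^\eps$ (Corollary \ref{corl-acot}), of $h_1,h_2$, and a bound $\E[\tau]<\infty$ (which one gets by the usual argument: $|x_k|^2$ plus a multiple of $k\eps^2$ is a submartingale bounded above, forcing the random walk to exit; this is where the exterior-sphere / boundedness of $\Omega$ is used) put us in case (b) or (c) of the (OSTh), so $\E^{(x,j)}_{S_I^0,S_{II}}[M_\tau]\ge M_0=w^\eps(x,j)$. Since $x_\tau\notin\Omega$ gives $w^\eps(x_\tau,j_\tau)=f(x_\tau)\chi_{\{1\}}(j_\tau)+g(x_\tau)\chi_{\{2\}}(j_\tau)$, we get $\E^{(x,j)}_{S_I^0,S_{II}}[\text{total payoff}]\ge w^\eps(x,j)-2\eta$, hence $\sup_{S_I}\inf_{S_{II}}\ge w^\eps(x,j)-2\eta$, and $\eta\to0$ finishes this half. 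The upper bound is symmetric: Player II uses the strategy of choosing, in board $2$, the board realizing $\min\{J_1(u^\eps),J_2(v^\eps)\}$ and, when the Tug-of-War coin favors her, pointing to a near-infimum of $w^\eps(\cdot,j_{k+1})$; then $M_k+\eta\sum 2^{-i}$ (with the sign of the $\eta$-correction flipped) is a supermartingale and the (OSTh) yields $\E^{(x,j)}_{S_I,S_{II}^0}[\text{total payoff}]\le w^\eps(x,j)+2\eta$ for every $S_I$.

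The main obstacle is the verification that $\{M_k\}$ is a (sub/super)martingale at the turns where the \emph{opponent} controls the board choice, since then the conditional expectation of $w^\eps(x_{k+1},j_{k+1})$ is a mixture over \emph{two different operators} $J_1,J_2$ whose relative size the controlling player (not us) decides. The point that makes it work is precisely the $\max$/$\min$ structure of the (DPP): when the token is in board $1$ and it is Player II who could be forced to accept board $1$ by Player I's choice is irrelevant because $u^\eps=\max\{J_1(u^\eps),J_2(v^\eps)\}\ge$ whatever branch, so Player I's own optimal choice saturates it, whereas if the game is in board $2$ and Player I (adversary in the lower-bound argument) could change to board $1$, we use $v^\eps=\min\{J_1(u^\eps),J_2(v^\eps)\}\le J_1(u^\eps)$, so even the adversary's board change cannot push the conditional expectation below $M_k$ (after the running-payoff correction and because $u^\eps\ge v^\eps$). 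Making these case distinctions clean — and bookkeeping the running payoff with the correct index $j_{k+1}$ rather than $j_k$ — is the delicate part; the $\E[\tau]<\infty$ estimate and the (OSTh) application are then routine.
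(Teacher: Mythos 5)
Your proposal follows essentially the same route as the paper: quasi-optimal strategies read off from the solution of the (DPP), the process $M_k$ obtained by evaluating $w^\eps$ along the trajectory and subtracting the accumulated running payoff, a case analysis over $(j_k,j_{k+1})$ in which the $\max/\min$ structure of the (DPP) absorbs the opponent's board choices, and the (OSTh) to close both inequalities. Two slips are worth flagging. First, the sign of your correction term is wrong: with $M_k=w^\eps(x_k,j_k)-\eps^2\sum(\cdots)-\eta\sum_{i=0}^{k-1}2^{-i}$ the correction \emph{decreases} by $\eta 2^{-k}$ at step $k$ while the near-optimal choice also loses up to $\eta 2^{-k}$, so one only gets $\E[M_{k+1}\mid M_0,\dots,M_k]\geq M_k-2\eta 2^{-k}$ and the submartingale claim fails as written; the correction must \emph{increase} by at least the slack at each step, i.e.\ take $+\eta\sum_{i=0}^{k-1}2^{-i}$ (or the paper's $-\delta 2^{-k}$), which is in fact what your final bound $\geq w^\eps(x,j)-2\eta$ presupposes. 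Second, your justification of $\E[\tau]<\infty$ does not work for arbitrary strategies: in a Tug-of-War step both players may point toward the origin, so $|x_{k+1}|^2$ can drop well below $|x_k|^2$ and $|x_k|^2+Ck\eps^2$ yields nothing; the quantitative bound of Lemma \ref{TEC1} is only available when one player adopts the pulling strategy, and for the arbitrary strategies appearing in the value-of-the-game argument one needs the standard "uniformly positive probability of a run of favorable random moves" argument to ensure the game terminates and the passage from $\E[M_{\tau\wedge k}]$ to $\E[M_\tau]$ is legitimate (a point the paper itself also glosses over). Apart from these bookkeeping issues, and a garbled labeling of which player is the adversary in board $2$ (it is Player II who controls the board choice there, which is exactly why $v^\eps=\min\{J_1(u^\eps),J_2(v^\eps)\}$ saves the inequality), your argument is the paper's.
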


\begin{proof}
	Fix $\delta>0$. Assume that we start at a point in the first board, $(x_0,1)$. Then, we choose a strategy $S_{I}^{\ast}$ for Player I using the solution to the (DPP) \eqref{DPP} as follows:
	Whenever $j_k =1$ Player I decides to stay in the first board if
	$$
	\max \Big\{J_1(u^{\eps})(x_k),
	J_2(v^{\eps})(x_k)\Big\} =J_1(u^{\eps})(x_k) ,
	$$
	and in this case Player I chooses a point 
		\begin{equation}
	x_{k+1}^I=S_{I}^{\ast}\big((x_0,j_0),\dots,(
	x_k,j_k)\big) \quad \mbox{such that} \sup_{y \in B_{\eps}(x_k)}u^{\eps}(y)-\frac{\delta}{2^{k+1}}\leq u^{\eps}(x_{k+1}^I).
	\end{equation}
	
	On the other hand, Player I decides to jump to the second board if
	$$
	\max \Big\{J_1(u^{\eps})(x_k),
	J_2(v^{\eps})(x_k)\Big\} =J_2(v^{\eps})(x_k) ,
	$$
	and in this case Player I chooses a point 
	\begin{equation}
	x_{k+1}^I=S_{I}^{\ast}\big((x_0,j_0),\dots,(
	x_k,j_k)\big) \quad \mbox{such that} \sup_{y \in B_{\eps}(x_k)}v^{\eps}(y)-\frac{\delta}{2^{k+1}}\leq v^{\eps}(x_{k+1}^I).
	\end{equation}
	
	Given this strategy for Player I and any strategy for Player II, we consider the sequence of random variables
	\begin{equation}
	M_k=w^{\eps}(x_k,j_k)-\eps^2\sum_{l=0}^{k-1}\big(h_1(x_l)\chi_{\{j=1\}}(j_{l+1})-h_2(x_l)\chi_{\{j=2\}}(j_{l+1})\big)-\frac{\delta}{2^k}.
	\end{equation}
	where $w^{\eps}(x_k,1)=u^{\eps}(x_k)$, $w^{\eps}(x_k,2)=v^{\eps}(x_k)$ and
	\begin{equation}
	\chi_{\{j=i\}}(j)=\left\lbrace
	\begin{array}{ll}
	\displaystyle 1 & j=i ,\\[8pt]
	\displaystyle 0 & j\neq i.
	\end{array}
	\right. 
	\end{equation}
	
	Let us see that $(M_k)_{k\geq 0}$ is a \textit{submartingale}. To this end, we need to estimate
	\begin{equation}
	\E_{S_{I}^{\ast},S_{II}}^{(x_0,1)}[M_{k+1}|M_0,\dots ,M_k].
	\end{equation}
	
	Let us consider several cases.
	
	\textbf{\underline{Case 1:}} Suppose that $j_k=1$ and $j_{k+1}=1$ (that is, we stay in the first board), then
	\begin{equation}
	\begin{array}{ll}
	\displaystyle \E_{S_{I}^{\ast},S_{II}}^{(x_0,1)}[M_{k+1}|M_0,\dots ,M_k] \\[10pt]
	\displaystyle =\E_{S_{I}^{\ast},S_{II}}^{(x_0,1)}\Big[u^{\eps}(x_{k+1})-\eps^2\sum_{l=0}^{k}\big(h_1(x_l)\chi_{\{j=1\}}(j_{l+1})-h_2(x_l)\chi_{\{j=2\}}(j_{l+1})\big)-\frac{\delta}{2^{k+1}}|M_0,\dots ,M_k\Big]\\[10pt]
	\displaystyle \underbrace{=}_{j_{k+1}=1}\E_{S_{I}^{\ast},S_{II}}^{(x_0,1)}\Big[u^{\eps}(x_{k+1})-\eps^2h_1(x_k) \\[10pt]
	\qquad \quad \displaystyle -\eps^2\sum_{l=0}^{k-1}\big(h_1(x_l)\chi_{\{j=1\}}(j_{l+1})-h_2(x_l)\chi_{\{j=2\}}(j_{l+1})\big)-\frac{\delta}{2^{k+1}}|M_0,\dots ,M_k\Big]\\[10pt]
	\displaystyle = \alpha_1\big(\half u^{\eps}(x_{k+1}^{I})+\half u^{\eps}(x_{k+1}^{II})\big)+(1-\alpha_1)\kint_{B_{\eps}(x_k)}u^{\eps}(y)dy-\eps^2h_1(x_k)\\[10pt]
	\displaystyle \qquad \quad -\eps^2\sum_{l=0}^{k-1}\big(h_1(x_l)\chi_{\{j=1\}}(j_{l+1})-h_2(x_l)\chi_{\{j=2\}}(j_{l+1})\big)-\frac{\delta}{2^{k+1}}
	\end{array}
	\end{equation}
	Hence, we get
	\begin{equation}
	\begin{array}{ll}
	\displaystyle \displaystyle \E_{S_{I}^{\ast},S_{II}}^{(x_0,1)}[M_{k+1}|M_0,\dots ,M_k] 
	\\[10pt]
	\displaystyle \geq \alpha_1\big(\half \sup_{y \in B_{\eps}(x_k)}u^{\eps}(y)-\frac{\delta}{2^{k+1}}+\half\inf_{y \in B_{\eps}(x_k)}u^{\eps}(y)\big)+(1-\alpha_1)\kint_{B_{\eps}(x_k)}u^{\eps}(y)dy-\eps^2h_1(x_k)\\[10pt]
	\displaystyle \qquad -\eps^2\sum_{l=0}^{k-1}\big(h_1(x_l)\chi_{\{j=1\}}(j_{l+1})-h_2(x_l)\chi_{\{j=2\}}(j_{l+1})\big)-\frac{\delta}{2^{k+1}}
	\\[10pt]
	\displaystyle
	\geq J_1(u^{\eps})(x_k) -\eps^2\sum_{l=0}^{k-1}\big(h_1(x_l)\chi_{\{j=1\}}(j_{l+1})-h_2(x_l)\chi_{\{j=2\}}(j_{l+1})\big)-\frac{\delta}{2^k}
	\\[10pt]
	\displaystyle
	=\max\{J_1(u^{\eps})(x_k),J_2(v^{\eps})(x_k)\} -\eps^2\sum_{l=0}^{k-1}\big(h_1(x_l)\chi_{\{j=1\}}(j_{l+1})-h_2(x_l)\chi_{\{j=2\}}(j_{l+1})\big)-\frac{\delta}{2^k}
	\\[10pt]
	\displaystyle
	=u^{\eps}(x_k) -\eps^2\sum_{l=0}^{k-1}\big(h_1(x_l)\chi_{\{j=1\}}(j_{l+1})-h_2(x_l)\chi_{\{j=2\}}(j_{l+1})\big)-\frac{\delta}{2^k}=M_k.
	\end{array}
	\end{equation}
	
	\textbf{\underline{Case 2:}} Suppose that $j_k=1$ and $j_{k+1}=2$ (that is, we jump to the second board), then
	\begin{equation}
	\begin{array}{ll}
	\displaystyle \E_{S_{I}^{\ast},S_{II}}^{(x_0,1)}[M_{k+1}|M_0,\dots ,M_k]\\[10pt]
		\displaystyle =\E_{S_{I}^{\ast},S_{II}}^{(x_0,1)}\Big[v^{\eps}(x_{k+1})-\eps^2\sum_{l=0}^{k}\big(h_1(x_l)\chi_{\{j=1\}}(j_{l+1})-h_2(x_l)\chi_{\{j=2\}}(j_{l+1})\big)-\frac{\delta}{2^{k+1}}|M_0,\dots ,M_k\Big]\\[10pt]
	\displaystyle \underbrace{=}_{j_{k+1}=2}\E_{S_{I}^{\ast},S_{II}}^{(x_0,1)}\Big[v^{\eps}(x_{k+1})+\eps^2h_2(x_k) \\[10pt]
	\qquad \quad \displaystyle -\eps^2\sum_{l=0}^{k-1}\big(h_1(x_l)\chi_{\{j=1\}}(j_{l+1})-h_2(x_l)\chi_{\{j=2\}}(j_{l+1})\big)-\frac{\delta}{2^{k+1}}|M_0,\dots ,M_k\Big]
	\end{array}
	\end{equation}
	Then, we obtain
	\begin{equation}
	\begin{array}{ll}
	\displaystyle \E_{S_{I}^{\ast},S_{II}}^{(x_0,1)}[M_{k+1}|M_0,\dots ,M_k]\\[10pt]
		\displaystyle \geq \alpha_2\big(\half \sup_{y \in B_{\eps}(x_k)}v^{\eps}(y)-\frac{\delta}{2^{k+1}}+\half\inf_{y \in B_{\eps}(x_k)}v^{\eps}(y)\big)+(1-\alpha_2)\kint_{B_{\eps}(x_k)}v^{\eps}(y)dy+\eps^2h_2(x_k)\\[10pt]
	\displaystyle  \qquad -\eps^2\sum_{l=0}^{k-1}\big(h_1(x_l)\chi_{\{j=1\}}(j_{l+1})-h_2(x_l)\chi_{\{j=2\}}(j_{l+1})\big)-\frac{\delta}{2^{k+1}}\\[10pt]
	\displaystyle = J_2(v^{\eps})(x_k) -\eps^2\sum_{l=0}^{k-1}\big(h_1(x_l)\chi_{\{j=1\}}(j_{l+1})-h_2(x_l)\chi_{\{j=2\}}(j_{l+1})\big)-\frac{\delta}{2^k}\\[10pt]
	\displaystyle =\max\{J_1(u^{\eps})(x_k),J_2(v^{\eps})(x_k)\} -\eps^2\sum_{l=0}^{k-1}\big(h_1(x_l)\chi_{\{j=1\}}(j_{l+1})-h_2(x_l)\chi_{\{j=2\}}(j_{l+1})\big)-\frac{\delta}{2^k}\\[10pt]
	\displaystyle =u^{\eps}(x_k) -\eps^2\sum_{l=0}^{k-1}\big(h_1(x_l)\chi_{\{j=1\}}(j_{l+1})-h_2(x_l)\chi_{\{j=2\}}(j_{l+1})\big)-\frac{\delta}{2^k}=M_k.
	\end{array}
	\end{equation}
	
	\textbf{\underline{Case 3:}} Suppose that $j_k=2$ and $j_{k+1}=2$ (that is, we stay in the second board), then
	\begin{equation}
	\begin{array}{l}
		\displaystyle \E_{S_{I}^{\ast},S_{II}}^{(x_0,1)}[M_{k+1}|M_0,\dots ,M_k]
		\\[10pt]
	\displaystyle =\E_{S_{I}^{\ast},S_{II}}^{(x_0,1)}\Big[v^{\eps}(x_{k+1})-\eps^2\sum_{l=0}^{k}\big(h_1(x_l)\chi_{\{j=1\}}(j_{l+1})-h_2(x_l)\chi_{\{j=2\}}(j_{l+1})\big)-\frac{\delta}{2^{k+1}}|M_0,\dots ,M_k\Big]\\[10pt]
	\displaystyle \underbrace{=}_{j_{k+1}=2}\E_{S_{I}^{\ast},S_{II}}^{(x_0,1)}\Big[v^{\eps}(x_{k+1})+\eps^2h_2(x_k) \\[10pt]
	\qquad \quad \displaystyle -\eps^2\sum_{l=0}^{k-1}\big(h_1(x_l)\chi_{\{j=1\}}(j_{l+1})-h_2(x_l)\chi_{\{j=2\}}(j_{l+1})\big)-\frac{\delta}{2^{k+1}}|M_0,\dots ,M_k\Big].
	\end{array}
	\end{equation}
	Therefore, we have
	\begin{equation}
	\begin{array}{l}
		\displaystyle\E_{S_{I}^{\ast},S_{II}}^{(x_0,1)}[M_{k+1}|M_0,\dots ,M_k]
		\\[10pt]
	\displaystyle \geq \alpha_2\big(\half \sup_{y \in B_{\eps}(x_k)}v^{\eps}(y)-\frac{\delta}{2^{k+1}}+\half\inf_{y \in B_{\eps}(x_k)}v^{\eps}(y)\big)+(1-\alpha_2)\kint_{B_{\eps}(x_k)}v^{\eps}(y)dy+\eps^2h_2(x_k)\\[10pt]
	\displaystyle \qquad  -\eps^2\sum_{l=0}^{k-1}\big(h_1(x_l)\chi_{\{j=1\}}(j_{l+1})-h_2(x_l)\chi_{\{j=2\}}(j_{l+1})\big)-\frac{\delta}{2^{k+1}}\\[10pt]
	\displaystyle \geq J_2(v^{\eps})(x_k) -\eps^2\sum_{l=0}^{k-1}\big(h_1(x_l)\chi_{\{j=1\}}(j_{l+1})-h_2(x_l)\chi_{\{j=2\}}(j_{l+1})\big)-\frac{\delta}{2^k}\\[10pt]
	\displaystyle \geq\min\{J_1(u^{\eps})(x_k),J_2(v^{\eps})(x_k)\} -\eps^2\sum_{l=0}^{k-1}\big(h_1(x_l)\chi_{\{j=1\}}(j_{l+1})-h_2(x_l)\chi_{\{j=2\}}(j_{l+1})\big)-\frac{\delta}{2^k}\\[10pt]
	\displaystyle =v^{\eps}(x_k) -\eps^2\sum_{l=0}^{k-1}\big(h_1(x_l)\chi_{\{j=1\}}(j_{l+1})-h_2(x_l)\chi_{\{j=2\}}(j_{l+1})\big)-\frac{\delta}{2^k}=M_k.
	\end{array}
	\end{equation}
	\textbf{\underline{Case 4:}} Suppose that $j_k=2$ and $j_{k+1}=1$ (that is, we jump to the first board)
	\begin{equation}
	\begin{array}{ll}
	\displaystyle \E_{S_{I}^{\ast},S_{II}}^{(x_0,1)}[M_{k+1}|M_0,\dots ,M_k]\\[10pt]
	\displaystyle =\E_{S_{I}^{\ast},S_{II}}^{(x_0,1)}\Big[u^{\eps}(x_{k+1})-\eps^2\sum_{l=0}^{k}\big(h_1(x_l)\chi_{\{j=1\}}(j_{l+1})-h_2(x_l)\chi_{\{j=2\}}(j_{l+1})\big)-\frac{\delta}{2^{k+1}}|M_0,\dots ,M_k\Big]\\[10pt]
	\displaystyle \underbrace{=}_{j_{k+1}=1}\E_{S_{I}^{\ast},S_{II}}^{(x_0,1)}\Big[u^{\eps}(x_{k+1}) \\[10pt]
	\qquad \quad \displaystyle -\eps^2h_1(x_k)-\eps^2\sum_{l=0}^{k-1}\big(h_1(x_l)\chi_{\{j=1\}}(j_{l+1})-h_2(x_l)\chi_{\{j=2\}}(j_{l+1})\big)-\frac{\delta}{2^{k+1}}|M_0,\dots ,M_k\Big]
.
\end{array}
\end{equation}
	Hence,
	\begin{equation}
	\begin{array}{ll}
	\displaystyle \E_{S_{I}^{\ast},S_{II}}^{(x_0,1)}[M_{k+1}|M_0,\dots ,M_k]\\[10pt]
	\displaystyle
	\displaystyle \geq \alpha_1\big(\half \sup_{y \in B_{\eps}(x_k)}u^{\eps}(y)-\frac{\delta}{2^{k+1}}+\half\inf_{y \in B_{\eps}(x_k)}u^{\eps}(y)\big)+(1-\alpha_1)\kint_{B_{\eps}(x_k)}u^{\eps}(y)dy-\eps^2h_1(x_k)\\[10pt]
	\displaystyle \qquad -\eps^2\sum_{l=0}^{k-1}\big(h_1(x_l)\chi_{\{j=1\}}(j_{l+1})-h_2(x_l)\chi_{\{j=2\}}(j_{l+1})\big)-\frac{\delta}{2^{k+1}}\\[10pt]
	\displaystyle = J_1(u^{\eps})(x_k) -\eps^2\sum_{l=0}^{k-1}\big(h_1(x_l)\chi_{\{j=1\}}(j_{l+1})-h_2(x_l)\chi_{\{j=2\}}(j_{l+1})\big)-\frac{\delta}{2^k}\\[10pt]
	\displaystyle \geq\min\{J_1(u^{\eps})(x_k),J_2(v^{\eps})(x_k)\} -\eps^2\sum_{l=0}^{k-1}\big(h_1(x_l)\chi_{\{j=1\}}(j_{l+1})-h_2(x_l)\chi_{\{j=2\}}(j_{l+1})\big)-\frac{\delta}{2^k}\\[10pt]
	\displaystyle =v^{\eps}(x_k) -\eps^2\sum_{l=0}^{k-1}\big(h_1(x_l)\chi_{\{j=1\}}(j_{l+1})-h_2(x_l)\chi_{\{j=2\}}(j_{l+1})\big)-\frac{\delta}{2^k}=M_k.
	\end{array}
	\end{equation}
	
	Thus, gathering the four cases, we conclude that $M_k$ is a submartingale. 
	
	Using the \textit{OSTh} we obtain
	\begin{equation}
	\displaystyle \E_{S_{I}^{\ast},S_{II}}^{(x_0,1)}[M_{\tau\wedge k}]\geq M_0.
	\end{equation}
	Taking limit as $k\rightarrow\infty$ we get
	\begin{equation}
	\displaystyle \E_{S_{I}^{\ast},S_{II}}^{(x_0,1)}[M_{\tau}]\geq M_0.
	\end{equation}
	If we take $\inf_{S_{II}}$ and then $\sup_{S_{I}}$ we arrive to
	\begin{equation}
	\displaystyle \sup_{S_{I}}\inf_{S_{II}}\E_{S_{I},S_{II}}^{(x_0,1)}[M_{\tau}]\geq M_0.
	\end{equation}
	This inequality says that
	\begin{equation}
	\displaystyle \sup_{S_{I}}\inf_{S_{II}}\E_{S_{I},S_{II}}^{(x_0,1)}[\mbox{total payoff}]\geq u(x_0)-\delta.
	\end{equation}
	
   To prove an inequality in the opposite direction we fix a strategy for Player II as follows: Whenever $j_k=1$ Player II decides to stay in the second board if
   \begin{equation}
   \min\big\{J_1(u^{\eps})(x_k),J_2(v^{\eps})(x_k)\big\}=J_2(v^{\eps})(x_k)
   \end{equation} 
   and Player II decides to jump to the first board when
   \begin{equation}
   \min\big\{J_1(u^{\eps})(x_k),J_2(v^{\eps})(x_k)\big\}=J_1(v^{\eps})(x_k)
   \end{equation}
   If we play Tug-of-War (in both boards) the Player II chosses
   \begin{equation}
   x_{k+1}^{II}=S_{II}^{\ast}\big((x_0,j_0),\dots,(
   x_k,j_k)\big) \quad \mbox{such that} \inf_{y \in B_{\eps}(x_k)}w^{\eps}(y,j_{k+1})+\frac{\delta}{2^{k+1}}\geq w^{\eps}(x_{k+1}^{II},j_{k+1}).
   \end{equation}
   
   Given this strategy for Player II and any strategy for Player I, using similar computations like the ones we did before, we can prove that the sequence of random variables
   \begin{equation}
   N_k=w^{\eps}(x_k,j_k)-\eps^2\sum_{l=0}^{k-1}\big(h_1(x_l)\chi_{\{j=1\}}(j_{l+1})-h_2(x_l)\chi_{\{j=2\}}(j_{l+1})\big)+\frac{\delta}{2^k}
   \end{equation}
   is a supermartingale. Finally, using the \textit{OSTh} we arrive to
   \begin{equation}
   \inf_{S_{II}}\sup_{S_{I}}\E_{S_{I},S_{II}}^{(x_0,1)}[\mbox{total payoff}]\leq u^{\eps}(x_0)+\delta.
   \end{equation}
   
   Then, we have obtained
   \begin{equation}
   u^{\eps}(x_0)-\delta\leq \sup_{S_{I}}\inf_{S_{II}}\E_{S_{I},S_{II}}^{(x_0,1)}[\mbox{total payoff}]\leq \inf_{S_{II}}\sup_{S_{I}}\E_{S_{I},S_{II}}^{(x_0,1)}[\mbox{total payoff}]\leq u^{\eps}(x_0)+\delta
   \end{equation}
for any positive $\delta$. 

Analogously, we can prove that
\begin{equation}
	v^{\eps}(x_0)-\delta\leq \sup_{S_{I}}\inf_{S_{II}}\E_{S_{I},S_{II}}^{(x_0,2)}[\mbox{total payoff}]\leq \inf_{S_{II}}\sup_{S_{I}}\E_{S_{I},S_{II}}^{(x_0,2)}[\mbox{total payoff}]\leq v^{\eps}(x_0)+\delta.
\end{equation}   
   
Since $\delta$ is arbitrary, this proves that the game has a value,
$$
\sup_{S_{I}}\inf_{S_{II}}\E_{S_{I},S_{II}}^{(x_0,1)}[\mbox{total payoff}] = \inf_{S_{II}}\sup_{S_{I}}\E_{S_{I},S_{II}}^{(x_0,1)}[\mbox{total payoff}] = w(x_0,1)$$
and
$$
\sup_{S_{I}}\inf_{S_{II}}\E_{S_{I},S_{II}}^{(x_0,2)}[\mbox{total payoff}] = \inf_{S_{II}}\sup_{S_{I}}\E_{S_{I},S_{II}}^{(x_0,2)}[\mbox{total payoff}] = w(x_0,2)
$$
and that these functions coincide with the solution to the (DPP),
$$
w(x_0,1) = u^\eps (x_0) \qquad \mbox{and}  \qquad w(x_0,2) = v^\eps (x_0)
$$
as we wanted to show.
\end{proof}

Since solutions to the (DPP) coincide with the value of the game and this is unique,
we obtain uniqueness of solutions to the (DPP). 

\begin{corol}
There exists a unique solution to the (DPP), \eqref{DPP}.
\end{corol}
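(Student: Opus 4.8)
The plan is to deduce the corollary from two facts already in hand. Existence of a solution to the (DPP) \eqref{DPP} has been established in Theorem \ref{existencia.DPP}: the pair $(u^\eps,v^\eps)$ defined in \eqref{eq.supremos} as the pointwise supremum of subsolutions lies in $\A$ and solves \eqref{DPP}. So the only thing left to argue is uniqueness.

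For uniqueness I would proceed as follows. Let $(u_1^\eps,v_1^\eps)$ and $(u_2^\eps,v_2^\eps)$ be two pairs of bounded functions each satisfying \eqref{DPP}. By Theorem \ref{teo.5}, applied separately to each pair, we get, for $i=1,2$ and every $x\in\R^N$,
$$
u_i^\eps(x)=\sup_{S_{I}}\inf_{S_{II}}\E_{S_{I},S_{II}}^{(x,1)}[\mbox{total payoff}], \qquad v_i^\eps(x)=\sup_{S_{I}}\inf_{S_{II}}\E_{S_{I},S_{II}}^{(x,2)}[\mbox{total payoff}].
$$
The right-hand sides are defined purely in terms of the game --- the admissible strategies, the running payoffs $-\eps^2 h_1$ and $\eps^2 h_2$, the final payoffs $f$ and $g$, and the underlying coin tosses --- and do not refer to any particular solution of the (DPP). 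Hence $u_1^\eps\equiv u_2^\eps$ and $v_1^\eps\equiv v_2^\eps$, which gives uniqueness and completes the proof.

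I do not expect a genuine obstacle here: all the work is absorbed into Theorem \ref{teo.5}, which builds quasi-optimal strategies out of an \emph{arbitrary} solution to the (DPP) and shows that this solution is squeezed between $u^\eps(x_0)-\delta$ and $u^\eps(x_0)+\delta$ (and likewise for $v^\eps$) for every $\delta>0$, so that it must coincide with the value of the game, which is a single well-defined object. The only subtleties to keep track of are that the game has a value at all --- established within the proof of Theorem \ref{teo.5} by the Optional Stopping Theorem, using the uniform bound of Corollary \ref{corl-acot} and the finiteness of $\E[\tau]$ along the relevant strategies --- and that Theorem \ref{teo.5} is stated for every pair solving \eqref{DPP}, not just the canonical one coming from \eqref{eq.supremos}. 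If one preferred a proof independent of the game, I would instead extract a comparison principle directly from the argument in Lemma \ref{subsup}: a subsolution of the (DPP) cannot exceed a supersolution, and applying this with $(u_1^\eps,v_1^\eps)$ and $(u_2^\eps,v_2^\eps)$ playing each role in turn forces $u_1^\eps=u_2^\eps$ and $v_1^\eps=v_2^\eps$.
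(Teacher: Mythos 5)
Your proposal is correct and matches the paper's own argument: existence comes from Theorem \ref{existencia.DPP}, and uniqueness follows because Theorem \ref{teo.5} shows that any solution to the (DPP) coincides with the value of the game, which is a single object defined independently of the solution. The extra remark about extracting a comparison principle from Lemma \ref{subsup} is a valid alternative but is not needed.
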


\begin{proof}
Existence follows from Theorem \ref{existencia.DPP}
	and uniqueness from the fact that in Theorem \ref{teo.5} we proved that
	any solution to the (DPP) coincides with the value function of the game, that is, it verifies 
	$$
		u^{\eps}(x)=\inf_{S_{II}}\sup_{S_{I}}\E_{S_{I},S_{II}}^{(x,1)}[\mbox{total payoff}]=\sup_{S_{I}}\inf_{S_{II}}\E_{S_{I},S_{II}}^{(x,1)}[\mbox{total payoff}]
	$$
	and 
	$$
	v^{\eps}(x)=\inf_{S_{II}}\sup_{S_{I}}\E_{S_{I},S_{II}}^{(x,2)}[\mbox{total payoff}]=\sup_{S_{I}}\inf_{S_{II}}\E_{S_{I},S_{II}}^{(x,2)}[\mbox{total payoff}].
	$$ 
\end{proof}


\subsection{Uniform convergence as $\eps \to 0$.} \label{sect-conv}


To obtain a convergent subsequence of the values of the game $u^\eps$ and $v^\eps$ we will use the following
Arzela-Ascoli type lemma. For its proof see Lemma~4.2 from \cite{MPRb}.

\begin{Lemma}\label{lem.ascoli.arzela} Let $$\{u^\eps : \overline{\Omega}
	\to \R\}_{\eps>0}$$ be a set of functions such that
	\begin{enumerate}
		\item there exists $C>0$ such that $|u^\eps (x)|<C$ for
		every $\eps >0$ and every $x \in \overline{\Omega}$,
		
		\bigskip
		
		\item \label{cond:2} given $\delta >0$ there are constants
		$r_0$ and $\eps_0$ such that for every $\eps < \eps_0$
		and any $x, y \in \overline{\Omega}$ with $|x - y | < r_0 $
		it holds
		$$
		|u^\eps (x) - u^\eps (y)| < \delta.
		$$
	\end{enumerate}
	Then, there exists  a uniformly continuous function $u:
	\overline{\Omega} \to \R$ and a subsequence still denoted by
	$\{u^\eps \}$ such that
	\[
	\begin{split}
	u^{\eps}\to u \qquad\textrm{ uniformly in }\overline{\Omega},
	\mbox{ as $\eps\to 0$.}
	\end{split}
	\]
\end{Lemma}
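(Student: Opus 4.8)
The plan is to run the classical Arzel\`a--Ascoli argument, adapted to two features of the hypotheses: the functions $u^\eps$ need not be continuous, and \eqref{cond:2} is only an \emph{asymptotic} equicontinuity (it holds for $\eps<\eps_0$, not for all $\eps$). First I would fix a countable dense set $D=\{z_1,z_2,\dots\}\subset\overline{\Omega}$, which exists since $\overline{\Omega}\subset\R^N$ is separable and complete (indeed compact). By hypothesis (1) the real sequence $(u^{\eps}(z_i))_\eps$ is bounded for each fixed $z_i$, so a Cantor diagonal extraction produces a sequence $\eps_j\to 0$ for which the limit $u(z_i):=\lim_{j\to\infty}u^{\eps_j}(z_i)$ exists for every $z_i\in D$.

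Next I would check that $u$ is uniformly continuous on $D$. Given $\delta>0$, take $r_0,\eps_0$ from \eqref{cond:2}; for $z_i,z_k\in D$ with $|z_i-z_k|<r_0$ and any $j$ with $\eps_j<\eps_0$ one has $|u^{\eps_j}(z_i)-u^{\eps_j}(z_k)|<\delta$, and letting $j\to\infty$ gives $|u(z_i)-u(z_k)|\le\delta$. Hence $u\colon D\to\R$ is uniformly continuous, and since $D$ is dense in the complete space $\overline{\Omega}$ it extends uniquely to a uniformly continuous function $u\colon\overline{\Omega}\to\R$ with the same modulus of continuity.

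Finally I would prove $u^{\eps_j}\to u$ uniformly on $\overline{\Omega}$. Fix $\delta>0$, apply \eqref{cond:2} with $\delta/3$ to get $r_0,\eps_0$, and shrink $r_0$ if needed so that also $|u(x)-u(y)|<\delta/3$ whenever $|x-y|<r_0$. By compactness cover $\overline{\Omega}$ by finitely many balls $B_{r_0}(z_l)$, $l=1,\dots,m$, with centres $z_l\in D$. Choose $J$ so large that $|u^{\eps_j}(z_l)-u(z_l)|<\delta/3$ for all $l$ and all $j\ge J$, and also $\eps_j<\eps_0$ for $j\ge J$. Then for any $x\in\overline{\Omega}$, picking $l$ with $x\in B_{r_0}(z_l)$,
\[
|u^{\eps_j}(x)-u(x)|\le |u^{\eps_j}(x)-u^{\eps_j}(z_l)|+|u^{\eps_j}(z_l)-u(z_l)|+|u(z_l)-u(x)|<\frac{\delta}{3}+\frac{\delta}{3}+\frac{\delta}{3}=\delta
\]
for all $j\ge J$, where the first term is controlled by \eqref{cond:2}, the second by pointwise convergence at $z_l$, the third by continuity of $u$. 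This is the desired uniform convergence.

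The only point requiring care is that the $u^\eps$ are merely bounded (not continuous) and that \eqref{cond:2} gives control only for $\eps<\eps_0$: this forces one to combine the diagonal extraction on $D$ with a \emph{finite} subcover so that a single threshold $J$ works simultaneously for all $x$, and to invoke \eqref{cond:2} only once $\eps_j<\eps_0$. Beyond this bookkeeping there is no real difficulty; the argument coincides with Lemma~4.2 of \cite{MPRb}.
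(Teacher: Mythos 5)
Your argument is correct and is exactly the standard proof of this asymptotic Arzelà--Ascoli variant: Cantor diagonalization over a countable dense subset, extension of the limit by the inherited modulus of continuity, and a finite subcover to upgrade pointwise to uniform convergence while only invoking hypothesis \eqref{cond:2} for $\eps_j<\eps_0$. The paper itself does not reproduce a proof but refers to Lemma~4.2 of \cite{MPRb}, which proceeds along the same lines, so there is nothing to add.
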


So our task now is to show that $u^\eps$ and $v^\eps$ both satisfy the hypotheses of the previous lemma. First, we
observe that we already proved that they are uniformly bounded (see Corollary \ref{corl-acot}). 

To obtain the second hypothesis of Lemma \ref{lem.ascoli.arzela} we will need to prove some technical lemmas. This part of the paper is delicate and involve the choice of 
particular strategies for the players. 

First of all, we need to find an upper bound for the expectation of the total number of plays, 
\begin{equation}
	\E[\tau].
\end{equation}
To this end we define an auxiliary game as follows: In the next lemma we play a Tug-of-War or random walk game in an annulus and one of the players uses the strategy 
of pointing to the center of the annulus when they play Tug-of-War. Then, no mater 
if we play Tug-of-War or random walk at each turn and no mater the strategy
used by the other player we can obtain a precise bound (in terms of the configuration of the annulus and the distance of the initial position to the inner boundary) for the expected number of plays
until one reaches the ball inside the annulus.

The key point here is that if one of the players pulls towards $0$ each time
that they play Tug-of-War then the expected number of plays is bounded 
above by a precise expression that scales as $\varepsilon^{-2}$ independently
of the game that is played at every round (Tug-of-War or random walk). This upper bound translates to our game (starting at any of the two boards) since the result
implies that if one of the players chooses 
 to pulls towards $0$ then, independently of the choice of the other player and
 independently of the board at which we play (that is, independently of the coin toss that selects the game that is played (Tug-of-War or random walk)), the game ends in an expected number of times that satisfies the obtained upper
 bound. See Remark \ref{2tau} below.

\begin{Lemma}
	\label{TEC1}
	Given $0<\delta<R$, let us consider the anular domain $B_R(0)\backslash B_{\delta}(0)$. In this domain we consider the following game: given $x\in B_R(0)\backslash B_{\delta}(0)$ the next position of the token can be chosen using the game Tug-of-War or a random walk. When Tug-of-War is played, one of the players pulls towards $0$. In all cases the next position is assumed to be in $B_{\eps}(x)\cap B_R(0)$. The game ends when the token reaches $B_{\delta}(0)$. Then, if $\tau^{\ast}$ is the exit time we have the estimate
	\begin{equation}
	\eps^2 \E^{x_0}[\tau^{\ast}]\leq C_1(R/\delta)\mbox{dist}(\partial B_{\delta}(0),x_0)+o(1)
	\end{equation}
	where $o(1)\rightarrow 0$ if $\eps\rightarrow 0$.
\end{Lemma}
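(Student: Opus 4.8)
The plan is to run a stopped-supermartingale argument driven by a single radial auxiliary function that decreases, in conditional expectation, by a fixed positive multiple of $\eps^{2}$ at \emph{every} round -- no matter whether a Tug-of-War or a random walk step is played at that round, and no matter what the non-pulling player does. Fix $R'=2R$ and let $\phi=\phi(|x|)$ be the radial solution of $\Delta\phi=-1$ in $B_{R'}(0)\setminus B_{\delta}(0)$ with $\phi=0$ on $\partial B_{\delta}(0)$ and $\phi'=0$ on $\partial B_{R'}(0)$, extended below $\partial B_{\delta}(0)$ by its second order Taylor polynomial at $\partial B_{\delta}(0)$. Integrating $(r^{N-1}\phi')'=-r^{N-1}$ gives $\phi'(r)=\tfrac{(R')^{N}-r^{N}}{Nr^{N-1}}\ge 0$ and $\phi''(r)=-\tfrac1N-\tfrac{(N-1)(R')^{N}}{Nr^{N}}\le-\tfrac1N$ on $[\delta,R']$, so on $\overline{B_R(0)}$ the function $\phi$ is nonnegative, radially nondecreasing, strictly concave in $r$, of class $C^{2}$, and $\Delta\phi=-1$; moreover $\phi\ge-C\eps$ on $B_{\delta}(0)$ and $\phi''\le-\tfrac1N$ on $[\delta-1,\delta]$ by the choice of extension. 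The enlargement $R'=2R$ is made so that $B_{\eps}(x)\subset B_{R'}(0)$ for all $x\in\overline{B_R(0)}$ and $\eps<R$, which takes care of the boundary effects at $\partial B_R(0)$.

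The heart of the matter is the following one-step estimate: there exist $c_0=c_0(N)>0$ and $\eps_0=\eps_0(N,R,\delta)>0$ such that, for every $\eps<\eps_0$, every $x_k$ in the annulus and every choice of the players,
\begin{equation*}
\E\big[\phi(x_{k+1})\mid x_0,\dots,x_k\big]\le\phi(x_k)-c_0\eps^{2}.
\end{equation*}
If the round is a random walk, then since $\phi$ is radially nondecreasing the truncation of the average to $B_{\eps}(x_k)\cap B_R(0)$ only lowers it, so by \eqref{asymp-1}
\begin{equation*}
\E\big[\phi(x_{k+1})\mid x_0,\dots,x_k\big]\le \kint_{B_{\eps}(x_k)}\phi(y)\,dy=\phi(x_k)+\tfrac{\eps^{2}}{2(N+2)}\Delta\phi(x_k)+o(\eps^{2})=\phi(x_k)-\tfrac{\eps^{2}}{2(N+2)}+o(\eps^{2}).
\end{equation*}
If the round is Tug-of-War, the designated player sends the token to $x_k-\eps\tfrac{x_k}{|x_k|}$, of radius $|x_k|-\eps$, while the other player's move has radius at most $\min(|x_k|+\eps,R)$; since $\phi$ is radially nondecreasing and $C^{2}$ on $[\,|x_k|-\eps,|x_k|+\eps\,]$ (using the extension when $|x_k|$ is within $\eps$ of $\delta$),
\begin{equation*}
\E\big[\phi(x_{k+1})\mid x_0,\dots,x_k\big]\le\tfrac12\phi(|x_k|-\eps)+\tfrac12\phi(|x_k|+\eps)=\phi(x_k)+\tfrac{\eps^{2}}{2}\phi''(|x_k|)+o(\eps^{2})\le\phi(x_k)-\tfrac{\eps^{2}}{2N}+o(\eps^{2}).
\end{equation*}
Choosing $\eps_0$ small enough that the $o(\eps^{2})$ terms are absorbed gives the displayed inequality with any $c_0<\min\{\tfrac1{2(N+2)},\tfrac1{2N}\}$.

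With the one-step estimate in hand, $M_k:=\phi(x_k)+c_0\eps^{2}\,(k\wedge\tau^{\ast})$ is a supermartingale. Applying the (OSTh) with the bounded stopping time $\tau^{\ast}\wedge k$ (condition (a)) yields $\E^{x_0}\!\big[\phi(x_{\tau^{\ast}\wedge k})\big]+c_0\eps^{2}\,\E^{x_0}[\tau^{\ast}\wedge k]\le\phi(x_0)$; since $\phi\ge0$ on the annulus and $\phi\ge-C\eps$ on $B_{\delta}(0)$ (the token overshoots into $B_{\delta}(0)$ by at most $\eps$), letting $k\to\infty$ by monotone convergence gives $c_0\eps^{2}\,\E^{x_0}[\tau^{\ast}]\le\phi(x_0)+C\eps$ (in particular $\E^{x_0}[\tau^{\ast}]<\infty$). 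Finally $\phi\in C^{1}([\delta,R])$ with $\phi(\delta)=0$, so by the mean value theorem $\phi(x_0)\le\|\phi'\|_{L^{\infty}[\delta,R]}\,(|x_0|-\delta)=\phi'(\delta)\,\mbox{dist}(\partial B_{\delta}(0),x_0)$, and therefore
\begin{equation*}
\eps^{2}\,\E^{x_0}[\tau^{\ast}]\le c_0^{-1}\phi'(\delta)\,\mbox{dist}(\partial B_{\delta}(0),x_0)+c_0^{-1}C\eps,
\end{equation*}
which is the asserted bound with $C_1(R/\delta)=c_0^{-1}\phi'(\delta)$ (a constant that depends on $N$ and on $R,\delta$; it scales by $\lambda$ under $(\delta,R,x_0)\mapsto\lambda(\delta,R,x_0)$, so after normalizing $\delta$ it is a function of $R/\delta$) and $o(1)=c_0^{-1}C\eps\to0$.

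The delicate point is the uniform one-step estimate: the \emph{same} $\phi$ must simultaneously dominate the diffusive random walk round (through $\Delta\phi<0$) and the adversarial Tug-of-War round (through the strict radial concavity $\phi''<0$, which makes even the worst admissible move of the non-pulling player --- going out to radius $|x_k|+\eps$ --- decrease $\phi$), and this must hold with a controlled $o(\eps^{2})$ error, including near $\partial B_R(0)$, where both the random average and the admissible moves are truncated, and near $\partial B_{\delta}(0)$, where one invokes the $C^{2}$ extension of $\phi$ with $\phi''$ still bounded away from $0$. All the truncations work in our favour thanks to the radial monotonicity of $\phi$, so the genuine content is the construction of $\phi$ and the two Taylor expansions; the remainder is the standard stopped-supermartingale estimate, in the spirit of the exit-time bounds for Tug-of-War in \cite{PSSW,MPRb,BRLibro}. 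Lastly, Remark~\ref{2tau} will record that this bound transfers to the original two-board games: there the choice between Tug-of-War and random walk at each turn is dictated by the coin tosses (with probabilities $\alpha_1$, $\alpha_2$) and not by a player, so the estimate applies verbatim as soon as one player adopts the strategy of pulling towards $0$.
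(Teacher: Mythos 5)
Your proof is correct and takes essentially the same route as the paper's: both construct a radial, increasing, concave-in-$r$ barrier solving a Poisson problem with Dirichlet data on the inner sphere and a Neumann condition on an enlarged outer sphere, verify via Taylor expansion that it decreases by a fixed multiple of $\eps^2$ in conditional expectation at every round (random walk through $\Delta\phi<0$, Tug-of-War through $\partial_{rr}\phi<0$, with radial monotonicity absorbing the truncations), and conclude with the Optional Stopping Theorem. The only differences are cosmetic (normalization of the Poisson right-hand side, outer radius $2R$ versus $R+\eps$, and your slightly more explicit handling of the extension past $\partial B_\delta(0)$ and of the worst admissible Tug-of-War move).
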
 

\begin{proof}
	Without loss of generality we can suppose that $S_{I}^{\ast}$ is to pull towards $0$. Let us call 
	\begin{equation}
	E_{\eps}(x)=\E_{S_{I}^{\ast},S_{II}}^x[\tau^{\ast}].
	\end{equation}
	Notice taht $E$ is radial and increasing in $r=|x|$. 
	Since our aim is to obtain a bound that is
	independent of the game (Tug-of-War or random walk) that is played at each round,
	if we try to maximize the expectation for the exit time, we have that the function $E$ 
	verifies
	\begin{equation}
	E_{\eps}(x)\leq \max\Big\{\big(\half\sup_{y \in B_{\eps}(x)\cap B_R(0)}E_{\eps}(y)+\half\inf_{y \in B_{\eps}(x)\cap B_R(0)}E_{\eps}(y),\kint_{B_{\eps}(x)\cap B_R(0)}E_{\eps}(y)dy\big)\Big\}+1.
	\end{equation}
	Hence, let us consider the DPP
		\begin{equation}
	\tilde{E}_{\eps}(x)=\max\Big\{\big(\half\sup_{y \in B_{\eps}(x)\cap B_R(0)}\tilde{E}_{\eps}(y)+\half\inf_{y \in B_{\eps}(x)\cap B_R(0)}\tilde{E}_{\eps}(y),\kint_{B_{\eps}(x)\cap B_R(0)}\tilde{E}_{\eps}(y)dy\big)\Big\}+1.
	\end{equation}
	Let us denote $F_{\eps}(x)=\eps^2 \tilde{E}_{\eps}(x)$, then we obtain
	\begin{equation}
	F_{\eps}(x)=\max\Big\{\big(\half\sup_{y \in B_{\eps}(x)\cap B_R(0)}F_{\eps}(y)+\half\inf_{y \in B_{\eps}(x)\cap B_R(0)}F_{\eps}(y),\kint_{B_{\eps}(x)\cap B_R(0)}F_{\eps}(y)dy\big)\Big\}+\eps^2.
	\end{equation}
	This induces us to look for a function $F$ such that 
	\begin{equation}
	\label{SisDPP}
	\left\lbrace
	\begin{array}{ll}
	\displaystyle F(x)\geq \kint_{B_{\eps}(x)}F(y)dy+\eps^2 \\[10pt]
	\displaystyle F(x)\geq \half\sup_{y \in B_{\eps}(x)}F(y)+\half\inf_{y \in B_{\eps}(x)}F(y)+\eps^2.
	\end{array}
	\right.
	\end{equation}
	We arrived to a sort of discrete version to the following inequalities 
	\begin{equation}
	\label{SisDif}
		\left\lbrace
	\begin{array}{ll}
	\displaystyle \Delta F(x)\leq -2(N+2), \quad & x\in B_{R+\eps}(0)\backslash \ol{B}_{\delta-\eps}(0) ,\\[10pt]
	\displaystyle \Delta^1_{\infty}F(x)\leq -2, & x\in B_{R+\eps}(0)\backslash \ol{B}_{\delta-\eps}(0).
	\end{array}
	\right.
	\end{equation}
	If we assume that $F$ is radial and increasing ir $r=|x|$ we get
	\begin{equation}
	\Delta^1_{\infty}F=\partial_{r r}F\leq \partial_{r r}F+\frac{N-1}{r}\partial_{r }F=\Delta F.
	\end{equation}
	Hence, to find a solution of \eqref{SisDif}. We can consider the problem
	\begin{equation}
	\label{EqPoisson}
	\left\lbrace
	\begin{array}{ll}
	\displaystyle \Delta F (x) =-2(N+2),\quad  & x\in B_{R+\eps}(0)\backslash \ol{B}_{\delta}(0),
	\\[8pt]
	\displaystyle F(x)=0, & x\in\partial B_{\delta}(0),\\[8pt]
	\displaystyle \frac{\partial F}{\partial \nu}(x) =0, & x\in\partial B_{R+\eps}(0).
	\end{array}
	\right.
	\end{equation}
	where $\frac{\partial F}{\partial \nu}$ refers to the outward 
	normal derivative. The solution to this problem takes the form
	\begin{equation}
	\begin{array}{ll}
	\displaystyle F(r)=-ar^2-br^{2-N}+c & \mbox{for } N>2,\\[8pt]
	\displaystyle F(r)=-ar^2-b\log(r)+c & \mbox{for } N=2. 
	\end{array}
	\end{equation}
with $a,b,c\in \R$ that depends of $\delta,R,\eps,N$. For example, for $N>2$, we obtain
that $a$, $b$ and $c$ are given by the solution to the following equations,
\begin{equation}
	\begin{array}{l}
	\displaystyle \Delta F=-2aN=-2(N+2),\\[8pt]
	\displaystyle \partial_{r }F(R+\eps)=-2a(R+\eps)-b(2-N)(R+\eps)^{1-N}=0,\\[8pt]
	\displaystyle F(\delta)=-a\delta^2-b\delta^{2-N}+c=0.
	\end{array}
\end{equation}
Observe that the resulting function $F(r)$ is increasing. 

	In this way we find $F$ that satisfies the inequalities \eqref{SisDif}. The classical calculation using Taylor's expansion shows that $F$ satisfies \eqref{SisDPP} for each $B_{\eps}(x)\subset B_{R+\eps}\backslash\ol{B}_{\delta-\eps}(0)$. Moreover, since $F$ is increasing in $r$, it holds that for each $x\in B_R(0)\backslash B_{\delta}(0)$,
	$$
	\displaystyle \kint_{B_{\eps}(x)\cap B_R(0)}F\leq\kint_{B_{\eps}(x)}F\leq F(x)-\eps^2
	$$ 
	and $$
	\displaystyle \half\sup_{y \in B_{\eps}(x)\cap B_R(0)}F+\half\inf_{y \in B_{\eps}(x)\cap B_R(0)}F\leq\half\sup_{y \in B_{\eps}(x)}F+\half\inf_{y \in B_{\eps}(x)}F\leq F(x)-\eps^2.
	$$	
	
	Consider the sequence of random variables $(M_k)_{k\geq 1}$ given by
	\begin{equation}
	M_k=F(x_k)+k\eps^2.
	\end{equation}
	Let us prove that $(M_k)_{k\geq 0}$ is a \textit{supermartingale}. Indeed, we have
	\begin{equation}
		\begin{array}{ll}
	\displaystyle	\E[M_{k+1}|M_0,\dots , M_k]=\E[F(x_{k+1})+(k+1)\eps^2|M_0,\dots , M_k]\\[10pt]
	\displaystyle \leq \max\big\{\half\sup_{y \in B_{\eps}(x)\cap B_R(0)}F(y)+\half\inf_{y \in B_{\eps}(x)\cap B_R(0)}F(y),\kint_{B_{\eps}(x)\cap B_R(0)}F(y)dy\big\}
	\\[10pt]
	\displaystyle
	\leq F(x_k)+k\eps^2
	\end{array}
	\end{equation}
if $x_k\in B_R(0)\backslash \ol{B_{\delta}(0)}$. Thus, $M_k$ is a supermartingale. Using the \textit{OSTh} we obtain
\begin{equation}
	\E[M_{\tau^{\ast}\wedge k}]\leq M_0.
\end{equation}
This means
\begin{equation}
	\E^{x_0}[F(x_{\tau^{\ast}\wedge k})+(\tau^{\ast}\wedge k)\eps^2]\leq F(x_0).
\end{equation}
Using that $x_{\tau^{\ast}}\in \ol{B_{\delta}(0)}\backslash\ol{B_{\delta-\eps}(0)}$ we get
\begin{equation}
	0\leq-\E^{x_0}[F(x_{\tau^{\ast}})]\leq o(1).
\end{equation}
Furthermore, the estimate
\begin{equation}
	0\leq F(x_0)\leq C(R/\delta)dist(\partial B_{\delta},x_0)
\end{equation}
holds for the solution of \eqref{EqPoisson}. Then, taking limit as $k\rightarrow\infty$ we obtain
\begin{equation}
	\eps^2\E[\tau^{\ast}]\leq F(x_0)-\E[F(x_{\tau^{\ast}})]\leq C(R/\delta)dist(\partial B_{\delta}(0),x_0)+o(1).
\end{equation}
This completes the proof.
\end{proof}

\begin{remark}
	\label{2tau} {\rm
	Suppose that we are playing the previous game in two boards with $\Om$ inside
	the annulus. Let us recall that $\Om$ satisfies an exterior sphere condition: there exists $\delta>0$ such that given $y\in\partial\Om$ there exists $z_y\in\R^N$ such that $B_{\delta}(z_y)\subset \R^N\backslash\Om$ and $ \overline{B_{\delta}(z_y)}\cap\overline{\Om}=\{ y \}$. Remark that if we have some $\delta_0$ that verifies the exterior sphere property for ball of that radious, then, the exterior sphere property is also verified for balls with radius $\delta$ for every $\delta<\delta_0$. Then, we can consider simultaneously the game defined in Lemma \ref{TEC1} in the annular domain $B_R(0)\backslash B_{\delta}(0)$ with $\Om\subset B_R(0)\backslash B_{\delta}(0)$, and with the same strategies for Player I and Player II, so that no mater which game, $J_1$ or $J_2$, is played in any of the two boards we obtain
	the bound for the expected number of plays
	given in Lemma \ref{TEC1}. That is, if in the two boards game we start for example in $(x_0,1)$ and Player I decides to stay in the first board and play $J_1$, in the one board game with the annular domain the third player decides to play Tug-of-War with $\alpha_1$ probability, or random walk with $1-\alpha_1$ probability, but if the player decides to jump to the second board and play $J_2$, then in the one board game the third player decides to play Tug-of-War with $\alpha_2$ probability and random walk with $1-\alpha_2$ probability. Thus, using that $\Om\subset B_R(0)\backslash B_{\delta}(0)$ we deduce that in the two boards game the exit time $\tau$ is smaller or equal than the exit time $\tau^{\ast}$ corresponding to the one board game considered in the previous lemma. This means that we have
\begin{equation}
\E[\tau]\leq\E[\tau^{\ast}].
\end{equation}
}
\end{remark}

Next we derive an estimate for the asymptotic uniform continuity of the so called non homogeneus $p$-Laplacian functions. 

\begin{Lemma}
	\label{TEC2}
	Let be $\Om$ as above, $h:\Om\rightarrow \R$ and $F:\R^N\backslash\Om\rightarrow \R$ two lipschitz functions. For $0<\beta<1$ let $\mu^{\eps}:\R^N\rightarrow \R$ be a function that verify the following DPP
	\begin{equation}
	\left\lbrace
	\begin{array}{ll}
	\displaystyle \mu^{\eps}(x)=\beta\big[\half\sup_{y \in B_{\eps}(x)}\mu^{\eps}(y)+\half\inf_{y \in B_{\eps}(x)}\mu^{\eps}(y)\big]+(1-\beta)\kint_{B_{\eps}(x)}\mu^{\eps}(y)dy+\eps^2 h(x), & x\in\Om, \\[10pt]
	\displaystyle \mu^{\eps}(x)=F(x), & x\in\R^N\backslash\Om.
	\end{array}
	\right.
	\end{equation}
	Then, given $\eta>0$ there exists $r_0>0$ and $\eps_0>0$ such that 
	\begin{equation}
	|\mu^{\eps}(x)-\mu^{\eps}(y)|<\eta
	\end{equation}
	if $|x-y|<r_0$ and $\eps<\eps_0$.
\end{Lemma}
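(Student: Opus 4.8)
The plan is to establish asymptotic equicontinuity by using the game representation of $\mu^\eps$ together with the bound on the expected number of plays from Lemma \ref{TEC1} and Remark \ref{2tau}. Recall that $\mu^\eps$ is the value function of a single-board Tug-of-War with noise game (with parameter $\beta$), running payoff $\eps^2 h$ and final payoff $F$: namely $\mu^\eps(x) = \sup_{S_I}\inf_{S_{II}} \E_{S_I,S_{II}}^x[\text{payoff}] = \inf_{S_{II}}\sup_{S_I} \E_{S_I,S_{II}}^x[\text{payoff}]$, which follows from the same Optional Stopping argument as in Theorem \ref{teo.5} (or can simply be quoted from \cite{MPRb,BlancPR}). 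Fix $\eta>0$ and two points $x,y\in\overline\Omega$ with $|x-y|<r_0$. The idea is a coupling/translation argument: to estimate $\mu^\eps(x)-\mu^\eps(y)$ we let the two games, one started at $x$ and one at $y$, be played with the \emph{same} coin tosses and with Player I (or Player II, depending on the sign we want) in the game started at $y$ \emph{mimicking} the moves made in the game started at $x$ by the same displacement vector, so that the two tokens stay a distance $|x-y|$ apart until one of them exits $\Omega$.

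The key steps, in order, are: (1) reduce to showing the two one-sided estimates $\mu^\eps(x)-\mu^\eps(y)\le \eta$ and $\mu^\eps(y)-\mu^\eps(x)\le \eta$, which by symmetry are the same statement; (2) set up the coupling of the two games so that $x_k - y_k = x_0 - y_0$ for all $k$ up to the first exit time; (3) use the exterior sphere condition on $\Omega$ (hence on the complement) to place, for each $y\in\partial\Omega$, an exterior ball of radius $\delta$, and apply Lemma \ref{TEC1} with the strategy of pulling towards the center of that ball to bound $\eps^2\E[\tau]$ by $C(R/\delta)\,\mathrm{dist}(\partial B_\delta, x_0) + o(1)$; actually for equicontinuity we only need that $\E[\tau]$ is finite and that $\eps^2\E[\tau]$ stays bounded, which Lemma \ref{TEC1} supplies; (4) estimate the difference of the payoffs along the coupled trajectories: the running-payoff contributions differ by at most $\eps^2 \sum_{k=0}^{\tau-1}|h(x_k)-h(y_k)| \le \eps^2\,\tau\, \mathrm{Lip}(h)\, |x-y| \le \mathrm{Lip}(h)\,|x-y|\,(\eps^2\E[\tau])$, which is controlled by a constant times $|x-y|$, while the terminal-payoff contribution is bounded using that $F$ is Lipschitz and that, by the exterior sphere / exit-time control, with probability close to one the token exits near the point on $\partial\Omega$ closest to the start (so the two exit points are close), together with the uniform bound on $\mu^\eps$ (Corollary-type bound, or directly from $\|F\|_\infty$, $\|h\|_\infty$) to handle the small-probability event where the exit positions are far apart; (5) choosing $r_0$ small (and then $\eps_0$ small to absorb the $o(1)$ terms) makes the total smaller than $\eta$.

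The main obstacle is step (4), specifically controlling the difference of the \emph{terminal} payoffs when one coupled token has already left $\Omega$ while the other has not: at that moment the coupling has to be abandoned, and one must argue that the lagging token can be steered to exit quickly and near the boundary point where the other one exited, paying only a small additional cost. The standard way to handle this is a two-stage argument: run the coupled strategy until the first token exits, then in the remaining single game switch to the "pull toward the nearest exterior ball center" strategy of Lemma \ref{TEC1} for the second token, which forces it to exit in an expected number of steps of order $\eps^{-2}\,\mathrm{dist}(x_0,\partial\Omega)$ and within distance $o(1)$ of the relevant boundary point; the Lipschitz continuity of $F$ and the boundedness of $\mu^\eps$ then close the estimate. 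The bookkeeping of how the constants depend on $\mathrm{Lip}(h)$, $\mathrm{Lip}(F)$, $\|F\|_\infty$, $R$, $\delta$ and $\mathrm{diam}(\Omega)$ is routine and follows the template already used in \cite{MPRb,BlancPR} and in the analysis of $u^\eps,v^\eps$ below.
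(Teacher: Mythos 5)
Your proposal follows essentially the same route as the paper: the boundary estimate is obtained by having one player pull towards the center of the exterior tangent ball, combining the resulting supermartingale with the exit-time bound of Lemma \ref{TEC1} (via Remark \ref{2tau}) and the Lipschitz continuity of $F$ and $h$; the interior case is then handled by coupling the two games with identical movements until one token exits and falling back on the boundary estimate for the other. The structure, the key lemma invoked, and the identified technical difficulty (the moment the coupling breaks) all match the paper's proof, so the proposal is correct and not materially different.
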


\begin{proof}
	We have several cases:
	
	\textbf{Case 1:} If $x,y\in\R^N\backslash\Om$ we have
	\begin{equation}
	|\mu^{\eps}(x)-\mu^{\eps}(y)|=|F(x)-F(y)|\leq L(F)|x-y|<\eta
	\end{equation}
	if $r_0<\frac{\eta}{L(F)}$.
	
	\textbf{Case 2:} Suppose now that $x\in\Om$ and $y\in\partial\Om$. Without loss of generality we can suppose that $\Om\subset B_R(0)\backslash B_{\delta}(0)$ and $y\in\partial B_{\delta}(0)$. Let us call $x_0=x$ the first position in the game. In first case consider that the Player I use the strategy of pulling towards $0$, denoted by $S_{I}^{\ast}$. Let us consider the sequence of random variables
	\begin{equation}
	M_k=|x_k|-C\eps^2 k
	\end{equation}
	If $C>0$ is large enough $M_k$ is a supermartingale. Indeed
	 \begin{equation}
	 \begin{array}{ll}
	 \displaystyle \E_{S_{I}^{\ast},S_{II}}^{x_0}[|x_{k+1}| |x_0,\dots x_k]\leq \beta\big[\half(|x_k|+\eps)+\half(|x_k|-\eps)\big]+(1-\beta)\kint_{B_{\eps}(x_k)}|z|dz\\[10pt]
	 \displaystyle \leq |x_k|+C\eps^2.
	 \end{array}
	 \end{equation}
	 The first inequality follows form the choice of the strategy, and the second from the estimate
	 \begin{equation}
	 \kint_{B_{\eps}(x)}|z|dz\leq |x|+C\eps^2.
	 \end{equation}
	 Using the \textit{OSTh} we obtain
	 \begin{equation}
	 \E_{S_{I}^{\ast},S_{II}}^{x_0}[|x_{\tau}|]\leq |x_0|+C\eps^2\E_{S_{I}^{\ast},S_{II}}[\tau].
	 \end{equation}
	 Now, Lemma \ref{TEC1} and Remark \ref{2tau} give us the estimate
	 \begin{equation}
	 \eps^2 \E_{S_{I}^{\ast},S_{II}}^{x_0}[\tau]\leq\eps^2 \E_{S_{I}^{\ast},S_{II}}^{x_0}[\tau^{\ast}]\leq C_1(R/\delta)\mbox{dist}(\partial B_{\delta}(0),x_0)+o(1).
	 \end{equation}
	 Then,
	 \begin{equation}
	 \E_{S_{I}^{\ast},S_{II}}^{x_0}[|x_{\tau}|]\leq |x_0-y|+\delta+C_2(R/\delta)|x_0-y|+o(1).
	 \end{equation}
	 Here we call $C_2(R/\delta)= C_1(R/\delta)$. If we rewrite this inequality we obtain
	 \begin{equation}
	 \E_{S_{I}^{\ast},S_{II}}^{x_0}[|x_{\tau}|]\leq\delta+C_3(R/\delta)|x_0-y|+o(1)
	 \end{equation}
	 with $C_3(R/\delta)=C_2(R/\delta)+1$. 
	 
	 Using that $F$ is a Lipschitz function we have
	 \begin{equation}
	 |F(x_{\tau})-F(0)|\leq L(F)|x_{\tau}|.
	 \end{equation}
	 Hence, we get
	 \begin{equation}
	 \begin{array}{ll}
	 \displaystyle \E_{S_{I}^{\ast},S_{II}}^{x_0}[F(x_{\tau})]\geq F(0)-L(F)\E_{S_{I}^{\ast},S_{II}}^{x_0}[|x_{\tau}|]\\[10pt]
	 \displaystyle \geq F(y)-L(F)\delta-L(F)C_3(R/\delta)|x_0-y|+ o(1)
	 \\[10pt]
	 \displaystyle
	 \geq F(y)-L(F)\delta-L(F)C_3(R/\delta)r_0-o(1).
	 \end{array}
	 \end{equation}
	Then
	\begin{equation}
	\E_{S_{I}^{\ast},S_{II}}^{x_0}[F(x_{\tau})+\eps^2\sum_{j=0}^{\tau -1}h(x_j)]\geq F(y)-L(F)\delta-L(F)C_3 r_0 - \lVert h\rVert_{\infty} C r_0 - o(1).
	\end{equation}
	Thus, taking $\inf_{S_{II}}$ and then $\sup_{S_{I}}$ we get
	\begin{equation}
	\mu^{\eps}(x_0)>F(y)-L(F)\delta-L(F)C_3 r_0 - \lVert h\rVert_{\infty} C r_0 - o(1)>F(y)-\eta
	\end{equation}
 here we take $\delta>0$ such that $L(F)\delta<\frac{\eta}{3}$, then take $r_0>0$ such that $(L(F)C_3+\lVert h\rVert_{\infty}C)r_0<\frac{\eta}{3}$ and $o(1)<\frac{\eta}{3}$.
 
 Analogously, we can obtain the estimate 
 \begin{equation}
 \mu^{\eps}(x_0)<F(y)+\eta
 \end{equation}
  if the player II use the strategy that pull towards $0$. This ends the proof in this case.
  
 \textbf{Case 3:} Now, given two points $x$ and $y$ inside $\Omega$ with $|x-y|<r_0$ we couple the  
  game starting at $x_0=x$ with the game starting at $y_0=y$ making the same movements. This coupling generates two sequences of positions $x_i$ and $y_i$
  such that $|x_i - y_i|<r_0$ and $j_i=k_i$. This continues until one of the games exits the domain (say at $y_\tau \not\in \Omega$).
  At this point for the game starting at $x_0$ we have that its position $x_\tau$ is close to the exterior point $y_\tau \not\in \Omega$ (since we
  have $|x_\tau - y_\tau|<r_0$) and hence we can use our previous estimates for points close to the boundary to conclude that 
  $$ |\mu^{\eps}(x_{0})- \mu^\eps (y_0)|< \eta. $$
  
  This ends the proof.   
\end{proof}

Now we are ready to prove the second condition of the Arzela-Ascoli type result, Lemma \ref{lem.ascoli.arzela}. 

\begin{Lemma}
	Let $(u^{\eps},v^{\eps})$ be a pair of functions that is a solution to the (DPP) \eqref{DPP} given by
	\begin{equation}
	\left\lbrace
	\begin{array}{ll}
	\displaystyle u^{\eps}(x)=\max\Big\{ J_1(u^{\eps})(x), J_2(v^{\eps})(x)\Big\} ,
	\qquad &  x \in \Omega,  \\[10pt]
	\displaystyle v^{\eps}(x)=\min\Big\{ J_1(u^{\eps})(x), J_2(v^{\eps})(x)\Big\} ,
	&  x \in \Omega,  \\[10pt]
	u^{\eps}(x) = f(x), \qquad & x \in \R^{N} \backslash \Omega,  \\[10pt]
	v^{\eps}(x) = g(x), \qquad &  x \in \R^{N} \backslash \Omega.
	\end{array}
	\right.
	\end{equation}
	Given $\eta>0$, there exists $r_0>0$ and $\eps_0>0$ such that 
\begin{equation}
|u^{\eps}(x)-u^{\eps}(y)|<\eta \qquad \mbox{and} \qquad |v^{\eps}(x)-v^{\eps}(y)|<\eta
\end{equation}
if $|x-y|<r_0$ and $\eps<\eps_0$. 
\end{Lemma}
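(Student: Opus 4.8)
The plan is to follow the scheme of Lemma~\ref{TEC2}, splitting into three cases according to where $x$ and $y$ lie, the only substantial work being the mixed case in which one point is in $\Om$ and the other on $\partial\Om$. If $x,y\in\partial\Om$ then $u^\eps(x)-u^\eps(y)=f(x)-f(y)$ and $v^\eps(x)-v^\eps(y)=g(x)-g(y)$, so the bound follows from the Lipschitz continuity of $f$ and $g$ by taking $r_0$ small. For the mixed case, fix $y\in\partial\Om$; by the exterior sphere condition, which (as recalled in Remark~\ref{2tau}) holds for every radius $\delta<\delta_0$, we may translate so that $B_\delta(0)\subset\R^N\setminus\Om$ with $\overline{B_\delta(0)}\cap\overline\Om=\{y\}$, and fix $R$ with $\Om\subset B_R(0)\setminus B_\delta(0)$. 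The goal is to prove $|u^\eps(x)-f(y)|<\eta$ and $|v^\eps(x)-g(y)|<\eta$ whenever $x\in\Om$ and $|x-y|<r_0$, choosing $\delta$, then $r_0$, then $\eps_0$ small depending only on $\eta$ and the data.

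The two ``easy'' one-sided bounds, $u^\eps(x)\ge f(y)-\eta$ and $v^\eps(x)\le g(y)+\eta$, use the obvious one-board strategy. For the first, starting at $(x,1)$ let Player I play $S_I^\ast=$ never leave board $1$ and point towards $0$ at each Tug-of-War turn; the game then reduces to a pure $J_1$ game. Whatever Player II does, each position stays in $B_\eps(x_k)$, and away from the origin the random-walk average of $|z|$ is at most $|x_k|+C\eps^2$, so $M_k=|x_k|-C\eps^2 k$ is a supermartingale; the (OSTh) gives $\E^{x_0}_{S_I^\ast,S_{II}}[|x_\tau|]\le|x_0|+C\eps^2\E[\tau]$, while Lemma~\ref{TEC1} and Remark~\ref{2tau} yield $\eps^2\E[\tau]\le C_1(R/\delta)\,\mathrm{dist}(\partial B_\delta(0),x_0)+o(1)\le C_1(R/\delta)|x_0-y|+o(1)$, hence $\E[|x_\tau|]\le\delta+C_3 r_0+o(1)$. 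Since $f$ is Lipschitz, $|f(x_\tau)-f(y)|\le L(f)(|x_\tau|+\delta)$, and the running payoff is bounded by $\eps^2K\,\E[\tau]=O(r_0)+o(1)$, so $\E_{S_I^\ast,S_{II}}[\mbox{total payoff}]\ge f(y)-\eta$ for every $S_{II}$, and therefore $u^\eps(x)=\sup_{S_I}\inf_{S_{II}}\E\ge\inf_{S_{II}}\E_{S_I^\ast,S_{II}}\ge f(y)-\eta$. The bound $v^\eps(x)\le g(y)+\eta$ is the mirror image: Player II never leaves board $2$ and pulls towards $0$, turning the game into a pure $J_2$ game, and $v^\eps(x)=\inf_{S_{II}}\sup_{S_I}\E\le\sup_{S_I}\E_{S_I,S_{II}^\ast}\le g(y)+\eta$.

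The delicate bounds are $u^\eps(x)\le f(y)+\eta$ and $v^\eps(x)\ge g(y)-\eta$, because the player who wants to force the exit near $y$ is \emph{not} the one deciding whether to change boards; the claim is that it still suffices for that player to pull towards $0$ at every Tug-of-War turn at which it is allowed to move. For $u^\eps(x)$, let Player II play $S_{II}^\ast=$ ``pull towards $0$ whenever you move the token'' (board choices, when they fall to Player II, arbitrary). Against any $S_I$: (i) each position still lies in $B_\eps(x_k)$, so $M_k=|x_k|-C\eps^2 k$ is again a supermartingale and $\E[|x_\tau|]\le|x_0|+C\eps^2\E[\tau]$; (ii) since at every Tug-of-War turn one player pulls towards $0$, Lemma~\ref{TEC1} and Remark~\ref{2tau} bound $\eps^2\E[\tau]$ by $C_1(R/\delta)|x_0-y|+o(1)$ \emph{independently of which board is chosen at each round}; and (iii) since $f\ge g$, whichever board the game exits from the final payoff is at most $f(x_\tau)\le f(y)+L(f)(|x_\tau|+\delta)$, while the running payoff is again $O(\eps^2\E[\tau])=O(r_0)+o(1)$. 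Combining (i)--(iii), $\sup_{S_I}\E_{S_I,S_{II}^\ast}[\mbox{total payoff}]\le f(y)+\eta$, hence $u^\eps(x)=\inf_{S_{II}}\sup_{S_I}\E\le f(y)+\eta$; the bound $v^\eps(x)\ge g(y)-\eta$ is symmetric, with Player I pulling towards $0$ and using $f\ge g$ to see that the final payoff is at least $g(x_\tau)$ in either board. I expect this step to be the main obstacle: one must verify that a single ``pull to $0$'' strategy of the player who does not control the board still governs both the expected number of plays and the terminal position, which is precisely where the board-independence of Remark~\ref{2tau} and the ordering $f\ge g$ enter.

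Finally, for $x,y\in\Om$ with $|x-y|<r_0$ we couple the game started at $x_0=x$ with the game started at $y_0=y$, using the same coin tosses, the same board decisions and mirrored Tug-of-War moves, so that $|x_k-y_k|<r_0$ and $j_k$ stays common until the first time $\tau$ that one of the two games leaves $\Om$; at that moment the other position is within $r_0$ (up to $\eps$) of $\partial\Om$, so the mixed case above (or the Lipschitz bound, if it has also left $\Om$) gives $|u^\eps(x_\tau)-u^\eps(y_\tau)|<\eta$, and likewise for $v^\eps$. As in Lemma~\ref{TEC2} this propagates back to $|u^\eps(x)-u^\eps(y)|<\eta$ and $|v^\eps(x)-v^\eps(y)|<\eta$, which completes the proof.
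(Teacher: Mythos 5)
Your proposal is correct and follows essentially the same route as the paper: the same three-case split (both points outside, one interior point near a boundary point, both interior with a coupling), the same ``stay in your own board and pull towards $0$'' strategy for the easy one-sided bounds, the same use of Lemma \ref{TEC1} and Remark \ref{2tau} to control $\eps^2\E[\tau]$ and the exit position for the hard bounds, and the same appeal to $f\geq g$ to absorb exits on the ``wrong'' board. The only (immaterial) difference is that the paper's quasioptimal Player II strategy commits to remaining in the second board once the token arrives there, while you leave Player II's board choices arbitrary; both work because the terminal payoff is dominated by $f(x_\tau)$ in either board.
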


\begin{proof}
We will proceed by repeating the ideas used in Lemma \ref{TEC2}. 

We consider again several cases.

\textbf{Case 1:} Suppose that $x,y\in\R^N\backslash\Om$, then we have
$$
|u^{\eps}(x)-u^{\eps}(y)|=|f(x)-f(y)|\leq L(f)|x-y|<\eta $$
and 
$$
|v^{\eps}(x)-v^{\eps}(y)|=|g(x)-g(y)|\leq L(g)|x-y|<\eta
$$
if $\max\{L(f),L(g)\}r_0<\eta$.

	\textbf{Case 2:} Let us begin with the estimate of $u^{\eps}$. Suppose now that $x\in\Om$ and $y\in\partial\Om$ in the first board (we denote $(x,1)$ and $(y,1)$). Without loss of generality we suppose again that $\Om\subset B_R(0)\backslash B_{\delta}(0)$ and $y\in\partial B_{\delta}(0)$. Let us call $x_0=x$ the first position in the game. Player I uses the following strategy called $S_{I}^{\ast}$: the token always stay in the first board (Player I decides not to change boards), and pulls towards $0$ when Tug-of-War is played. 
	In this case we have that $u^{\eps}$ is a supersolution to the 
	DPP that appears in Lemma \ref{TEC2} (with $\beta=\alpha_1$). Notice that
	the game is always played in the first board.
	As Player I wants to maximize the expected value we get that the first
	component for our system, $u^\varepsilon$, verifies
	$$
	u^\varepsilon (x) \geq \mu^{\varepsilon} (x)
	$$
	(the value function when the player that wants to maximize
	is allowed to choose to change boards is bigger or equal than the value function 
	of a game where the player does not have the possibility of making this choice).
	From this bound and Lemma 6 a lower bound for  $u^\varepsilon$ close to the boundary follows. That is,
	from the estimate obtained in that lemma we get
	\begin{equation}
	 u^{\eps}(x)>f(y)-\eta.
	 \end{equation}
	Let us be more precise and consider the sequence of random variables 
\begin{equation}
M_k=|x_k|-C\eps^2 k.
\end{equation}
	We obtain arguing as before that $M_k$ is a supermartingale for $C>0$ large enough. If we repeat the reasoning of the Lemma \ref{TEC2} (this can be done because we stay in the first board) we arrive to
	 \begin{equation}
	 u^{\eps}(x)>f(y)-\eta
	 \end{equation}
if $|x-y|<r_0$ and $\eps<\eps_0$ for some $r_0$ and $\eps_0$. 

Now, the next estimate requires a particular strategy for Player II, $S_{II}^{\ast}$: when play the Tug-of-War game, Player II pulls towards $0$ (in both boards) and if in some step Player I decides to jump to the second board, then Player II decides to stay always in this board
and the position never comes back to the first board. Let us consider 
\begin{equation}
M_k=|x_k|-C\eps^2 k.
\end{equation}
We want to estimate 
\begin{equation}
\E_{S_{I},S_{II}^{\ast}}^{(x_0,1)}[|x_{k+1}| | x_0, \dots ,x_k].
\end{equation}
Now, Lemma \ref{TEC2} says that
\begin{equation}
\E_{S_{I},S_{II}^{\ast}}^{(x_0,1)}[|x_{k+1}| | x_0, \dots ,x_k]\leq |x_k|+C\eps^2
\end{equation}
for all posible combinations of $j_k$ and $j_{k+1}$. Using the \textit{OSTh} we obtain
\begin{equation}
\E_{S_{I},S_{II}^{\ast}}^{(x_0,1)}[|x_{\tau}|]\leq |x_0|+C\eps^2\E_{S_{I},S_{II}^{\ast}}[\tau].
\end{equation}
Let us suppose that $j_{\tau}=1$. This means that $j_k=1$ for all $0\leq k \leq \tau$. If we proceed as in Lemma \ref{TEC2}, we obtain 
\begin{equation}
\E_{S_{I},S_{II}^{\ast}}^{(x_0,1)}[\mbox{final payoff}]\leq f(y)+L\delta+LCr_0+\lVert h_1\rVert_{\infty}Cr_0+o(1).
\end{equation}
 On the other hand, if $j_{\tau}=2$, we have
 \begin{equation}
 \E_{S_{I},S_{II}^{\ast}}^{(x_0,1)}[g(x_{\tau})]\leq g(y)+L\delta+LC_3r_0+o(1)\leq f(y)+L\delta+LC_3r_0+o(1).
 \end{equation}
 Thus, we get
 \begin{equation}
 \begin{array}{l}
 \displaystyle
 \E_{S_{I},S_{II}^{\ast}}^{(x_0,1)}[g(x_{\tau})+\sum_{l=0}^{\tau -1}(h_1(x_l)\chi_{\{l=1\}}(l)+h_2(x_l)\chi_{\{l=2\}}(l))] \\[10pt]
 \displaystyle \leq f(y)+L\delta+LCr_0+(\lVert h_1\rVert +\lVert h_2\rVert_{\infty})Cr_0+o(1).
 \end{array}
 \end{equation}
 
 In both cases, taking $\sup_{S_{I}}$ and then $\inf_{S_{II}}$ we arrive to
 \begin{equation}
 u^{\eps}(x_0)\leq f(y)+\eta
 \end{equation}
 taking $\delta>0$, $r_0>0$ and $\eps>0$ small enough.
 
 Analogously we can obtain the estimates for $v^{\eps}$ and complete the proof.  
\end{proof}

As a corollary we obtain uniform convergence along a sequence $\eps_j \to 0$.

\begin{corol}
There exists a sequence $\eps_j \to 0$ and a pair of functions $(u,v)$ that are continuous in $\overline{\Omega}$ such that
$$
	u^{\varepsilon_j}\rightrightarrows u, \qquad v^{\varepsilon_j}\rightrightarrows v
	$$
	uniformly in $\overline{\Omega}$.
\end{corol}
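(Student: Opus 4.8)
The plan is simply to invoke the Arzela--Ascoli type result, Lemma~\ref{lem.ascoli.arzela}, for the two families $\{u^{\eps}\}_{\eps>0}$ and $\{v^{\eps}\}_{\eps>0}$. The first hypothesis of that lemma (uniform boundedness by a constant independent of $\eps$) is exactly the content of Corollary~\ref{corl-acot}, which gives $|u^{\eps}(x)|<\Lambda$ and $|v^{\eps}(x)|<\Lambda$ for all $x\in\R^N$ and all $\eps\leq\eps_0$. The second hypothesis (asymptotic equicontinuity: given $\eta>0$ there are $r_0,\eps_0$ with $|u^{\eps}(x)-u^{\eps}(y)|<\eta$ whenever $|x-y|<r_0$ and $\eps<\eps_0$, and likewise for $v^{\eps}$) is precisely the statement of the preceding Lemma.

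First I would apply Lemma~\ref{lem.ascoli.arzela} to the family $\{u^{\eps}\}$: this produces a sequence $\eps_j\to 0$ and a uniformly continuous function $u\colon\overline{\Omega}\to\R$ with $u^{\eps_j}\rightrightarrows u$ in $\overline{\Omega}$. Then I would apply the same lemma to the already thinned family $\{v^{\eps_j}\}_j$, which still satisfies both hypotheses with the same constants; this yields a further subsequence, relabelled again as $\eps_j$, together with a uniformly continuous $v\colon\overline{\Omega}\to\R$ such that $v^{\eps_j}\rightrightarrows v$. Passing to the further subsequence does not destroy the convergence $u^{\eps_j}\rightrightarrows u$, so along this common sequence both value functions converge uniformly to continuous limits, which is exactly the assertion of the corollary.

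There is essentially no substantive obstacle here: the only point requiring (routine) care is extracting a single sequence that works simultaneously for $u^{\eps}$ and $v^{\eps}$, which is handled by the nested-subsequence argument above. All the genuine work --- the uniform bound and, above all, the delicate equicontinuity estimates near $\partial\Omega$, which required constructing suitable strategies for both players (including the trickier choice for the player who does not control the board change) --- has already been carried out in Corollary~\ref{corl-acot} and in the two preceding lemmas.
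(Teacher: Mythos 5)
Your proposal is correct and follows exactly the paper's route: invoke the Arzela--Ascoli type result, Lemma \ref{lem.ascoli.arzela}, whose two hypotheses are supplied by Corollary \ref{corl-acot} (uniform boundedness) and the preceding equicontinuity lemma. The only addition is that you spell out the nested-subsequence extraction to get a single sequence working for both $u^{\eps}$ and $v^{\eps}$, which the paper leaves implicit.
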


\begin{proof}
The result follows from Lemma \ref{lem.ascoli.arzela}. 
\end{proof}


\subsection{The limit is a viscosity solution to the PDE system} \label{sect-lim-viscoso}

Our main goal in this section is to prove that the limit pair $(u,v)$ is a viscosity solution to 
\eqref{ED1}. 

First, let us state the precise definition of what we understand as a viscosity 
solution for the system \eqref{ED1}. We refer to \cite{CIL}
for a general reference to viscosity theory.

{\bf Viscosity solutions.}
We begin with the definition of a viscosity solution to a fully nonlinear second order elliptic PDE.
Fix a function
\[
P:\Omega\times\R\times\R^N\times\mathbb{S}^N\to\R
\]
where $\mathbb{S}^N$ denotes the set of symmetric $N\times N$ matrices and
consider the PDE 
\begin{equation}
\label{eqvissol}
P(x,u (x), Du (x), D^2u (x)) =0, \qquad x \in \Omega.
\end{equation}
We will assume that $P$ is degenerate elliptic, that is, $P$ satisfies a monotonicity property with respect
to the matrix variable, that is,
\[
X\leq Y \text{ in } \mathbb{S}^N \implies P(x,r,p,X)\geq P(x,r,p,Y)
\]
for all $(x,r,p)\in \Omega\times\R\times\R^N$.

\begin{definition}
	\label{def.sol.viscosa.1}
	A lower semi-continuous function $ u $ is a viscosity
	supersolution of \eqref{eqvissol} if for every $ \phi \in C^2$ such that $ \phi $
	touches $u$ at $x \in \Omega$ strictly from below (that is, $u-\phi$ has a strict minimum at $x$ with $u(x) = \phi(x)$), we have
	$$P(x,\phi(x),D \phi(x),D^2\phi(x))\geq 0.$$
	
	An upper semi-continuous function $u$ is a subsolution of \eqref{eqvissol} if
	for every $ \psi \in C^2$ such that $\psi$ touches $u$ at $ x \in
	\Omega$ strictly from above (that is, $u-\psi$ has a strict maximum at $x$ with $u(x) = \psi(x)$), we have
	$$P(x,\phi(x),D \phi(x),D^2\phi(x))\leq 0.$$
	
	Finally, $u$ is a viscosity solution of \eqref{eqvissol} if it is both a super- and a subsolution.
	
	When $P$ is not continuous one has to consider the upper and lower semicontinuous envelopes of $P$, that we denote by $P^*$ and $P_*$ respectively, and consider 
	$$P^*(x,\phi(x),D \phi(x),D^2\phi(x))\geq 0,$$
	and 
	$$P_*(x,\phi(x),D \phi(x),D^2\phi(x))\leq 0,$$
	when defining super- and subsolutions.
\end{definition}

In our system \eqref{ED1} we have two equations given by the functions
$$
F_1 (x,u,v,p,X)  =  \min\Big\{-  \frac{\alpha_1}{2} \langle X \frac{p}{|p|}, \frac{p}{|p|} \rangle  - \frac{(1-\alpha_1)}{2(N+2)} trace(X)+h_1(x),(u-v)(x)\Big\}  
$$
and 
$$
F_2  (x,u,v,q,Y) =\max \Big\{ -  \frac{\alpha_2}{2} \langle Y \frac{q}{|q|}, \frac{q}{|q|}\rangle  - \frac{(1-\alpha_2)}{2(N+2)}trace(Y)-h_2(x),(v-u)(x)\Big\}.
$$

These functions $F_1$ and $F_2$ are not continuous (they are not even well defined for $p=0$ and for $q=0$ respectively). The upper semicontinuous envelope of $F_1$ is given by  
$$
(F_1)^* (x,u,v,p,X)  = 
\left\{
\begin{array}{ll}
\displaystyle  \min\Big\{-  \frac{\alpha_1}{2} \langle X \frac{p}{|p|}, \frac{p}{|p|} \rangle  - \frac{(1-\alpha_1)}{2(N+2)} trace(X)+h_1(x),(u-v)(x)\Big\} , \quad p\neq 0, \\[10pt]
 \displaystyle  \min\Big\{-  \frac{\alpha_1}{2} \lambda_1 (X)   - \frac{(1-\alpha_1)}{2(N+2)} trace(X)+h_1(x),(u-v)(x)\Big\} , \quad p = 0.
 \end{array} \right.
$$
Here $\lambda_1 (X) = \min \{ \lambda : \lambda \mbox{ is an eigenvalue of } X \}$.
While the lower semicontinuous envelope is 
$$
(F_1)_* (x,u,v,p,X)  = 
\left\{
\begin{array}{ll}
\displaystyle  \min\Big\{-  \frac{\alpha_1}{2} \langle X \frac{p}{|p|}, \frac{p}{|p|} \rangle  - \frac{(1-\alpha_1)}{2(N+2)} trace(X)+h_1(x),(u-v)(x)\Big\} , \quad p\neq 0, \\[10pt]
 \displaystyle  \min\Big\{-  \frac{\alpha_1}{2} \lambda_N (X)   - \frac{(1-\alpha_1)}{2(N+2)} trace(X)+h_1(x),(u-v)(x)\Big\} , \quad p= 0.
 \end{array} \right.
$$
Here $\lambda_N (X) = \max \{ \lambda : \lambda \mbox{ is an eigenvalue of } X \}$.

Analogous formulas hold for $(F_2)^*$ and $(F_2)_*$ changing $\alpha_1$ by $\alpha_2$.

Then, the definition of a viscosity solution for the system \eqref{ED1} that we will use here is the following.

\begin{definition}
	\label{def.sol.viscosa.system}
	A pair of continuous functions $ u, v :\overline{\Omega} \mapsto \mathbb{R} $ is a viscosity
	solution to \eqref{ED1} if
	\begin{enumerate}
\item	$$
	u(x) \geq v(x) \qquad x \in \overline{\Omega},
	$$
	\item
	$$
	u|_{\partial \Omega} = f, \qquad v|_{\partial \Omega} = g,
	$$
	\item
	$$
	\mbox{$u$ is a viscosity solution to 
		$F_1 (x,u,v(x),\nabla u, D^2u) = 0$ in $\{x:u(x) > v(x)\}$ }
	$$
	$$
	\mbox{$u$ is a viscosity supersolution to 
		$F_1 (x,u,v(x),\nabla  u, D^2u) = 0$ in $\Omega$ ,}
	$$
	\item
	$$
	\mbox{$v$ is a viscosity solution to 
		$
		F_2 (x,u(x),v,\nabla v, D^2v) = 0$ in $\{x:u(x) > v(x)\}$}
	$$
	$$
	\mbox{$v$ is a viscosity solution to 
		$
		F_2 (x,u(x),v,\nabla v, D^2v) = 0$ in $\Omega$}.
	$$
	\end{enumerate}
\end{definition}

\begin{remark} {\rm The meaning of Definition \ref{def.sol.viscosa.system}
is that
		we understand a solution to \eqref{ED1} as a pair of continuous up to the boundary functions
		that satisfies the boundary conditions pointwise and such that $u$ is a viscosity solution to 
		the obstacle problem (from below) for the first equation in the system (with $v$ as a fixed continuos function of $x$ as obstacle from below) and $v$ solves the obstacle problem (from above) for the second equation 
		in the system (regarding $u$ as a fixed function of $x$ as obstacle from above). 
	}
\end{remark}

With this definition at hand we are ready to show that any uniform limit of the value functions
of our game is a viscosity solution to the two membranes problem with the two different $p-$Laplacians.

\begin{theorem}  \label{teo.sol.viscosa} Let $(u,v)$ be continuous functions that are a uniform
	limit of a sequence of values of the game, that is,
	$$
	u^{\varepsilon_j}\rightrightarrows u, \qquad v^{\varepsilon_j}\rightrightarrows v
	$$
	uniformly in $\overline{\Omega}$ as $\eps_j \to 0$. 
	Then, the limit pair $(u,v)$ is a viscosity solution to 
	\eqref{ED1}. 
\end{theorem}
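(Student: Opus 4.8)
The strategy is the by-now standard passage from the Dynamic Programming Principle \eqref{DPP} to the viscosity formulation, using the asymptotic expansions \eqref{asymp-1}--\eqref{asymp-2}, with two points requiring care: the behaviour at a point where a test function has vanishing gradient, and the decoupling of the two equations inside the open set $\{u>v\}$. I would first dispose of the elementary parts of Definition \ref{def.sol.viscosa.system}: the ordering $u\ge v$ on $\overline\Omega$ is the uniform limit of $u^{\eps_j}\ge v^{\eps_j}$; the boundary values $u=f$, $v=g$ on $\partial\Omega$ follow by letting $\eps_j\to0$ in the boundary-continuity estimates of Section \ref{sect-conv} (the ``Case 2'' bounds), which force $u^{\eps_j}(x)$ to be close to $f(y)$ and $v^{\eps_j}(x)$ close to $g(y)$ whenever $x$ is near $y\in\partial\Omega$.

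Next I would prove that $u$ is a viscosity supersolution of $F_1=0$ in all of $\Omega$ (and, symmetrically, that $v$ is a viscosity subsolution of $F_2=0$ in $\Omega$). Let $\phi\in C^2$ touch $u$ strictly from below at $x_0\in\Omega$. By uniform convergence there are points $x_{\eps_j}\to x_0$ realising (up to an $o(\eps_j^2)$ error) the minimum of $u^{\eps_j}-\phi$ over a fixed ball $\overline{B_r(x_0)}\subset\Omega$; writing $\xi_j=u^{\eps_j}(x_{\eps_j})-\phi(x_{\eps_j})$ we get $u^{\eps_j}\ge\phi+\xi_j-o(\eps_j^2)$ near $x_0$. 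From the first line of the (DPP), $u^{\eps_j}(x_{\eps_j})\ge J_1(u^{\eps_j})(x_{\eps_j})$; since $J_1$ is monotone and $J_1(w+\xi)=J_1(w)+\xi$, this yields $\phi(x_{\eps_j})\ge J_1(\phi)(x_{\eps_j})-o(\eps_j^2)$ once $B_{\eps_j}(x_{\eps_j})\subset B_r(x_0)$. When $\nabla\phi(x_0)\ne0$, expanding $J_1(\phi)(x_{\eps_j})-\phi(x_{\eps_j})$ with \eqref{asymp-1}--\eqref{asymp-2}, dividing by $\eps_j^2$ and letting $j\to\infty$ gives $-\Delta_p^1\phi(x_0)+h_1(x_0)\ge0$, and together with $u(x_0)-v(x_0)\ge0$ this is precisely $(F_1)^*\ge0$ at $x_0$. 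When $\nabla\phi(x_0)=0$ the $\infty$-Laplacian term is undefined and one must estimate $\half\sup_{B_{\eps_j}(x_{\eps_j})}\phi+\half\inf_{B_{\eps_j}(x_{\eps_j})}\phi-\phi(x_{\eps_j})$ directly, which produces the smallest eigenvalue $\lambda_1(D^2\phi(x_0))$ appearing in $(F_1)^*$; this is carried out exactly as in \cite{MPRb} and \cite{BlancPR}. The mirror computation, using $v^{\eps_j}(x_{\eps_j})\le J_2(v^{\eps_j})(x_{\eps_j})$ from the second line of the (DPP) and a test function touching $v$ from above, shows $v$ is a subsolution of $F_2=0$ in $\Omega$.

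It remains to show that $u$ solves $-\Delta_p^1 u+h_1=0$ and $v$ solves $-\Delta_q^1 v-h_2=0$ in $\{u>v\}$; the supersolution half for $u$ and the subsolution half for $v$ are already covered, so only the remaining one-sided inequalities are at stake. The key structural observation is a decoupling fact: if $x_0\in\{u>v\}$ and $x_{\eps_j}\to x_0$, then for $j$ large $u^{\eps_j}(x_{\eps_j})=J_1(u^{\eps_j})(x_{\eps_j})$ and $v^{\eps_j}(x_{\eps_j})=J_2(v^{\eps_j})(x_{\eps_j})$. Indeed, by the (DPP) $u^{\eps_j}(x_{\eps_j})$ is the larger of $J_1(u^{\eps_j})(x_{\eps_j})$ and $J_2(v^{\eps_j})(x_{\eps_j})$ while $v^{\eps_j}(x_{\eps_j})$ is the smaller; if along a subsequence the larger were $J_2(v^{\eps_j})(x_{\eps_j})$, then $v^{\eps_j}(x_{\eps_j})=J_1(u^{\eps_j})(x_{\eps_j})$, and since the sup, inf and average of $u^{\eps_j}$ over $B_{\eps_j}(x_{\eps_j})$ all tend to $u(x_0)$ by uniform convergence and continuity, we would get $v(x_0)=\lim J_1(u^{\eps_j})(x_{\eps_j})=u(x_0)$, contradicting $x_0\in\{u>v\}$. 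With this equality in hand the argument of the previous paragraph runs in \emph{both} directions: a $C^2$ function $\psi$ touching $u$ from above at $x_0$ gives $J_1(u^{\eps_j})(x_{\eps_j})\le J_1(\psi)(x_{\eps_j})+o(\eps_j^2)$, hence $\psi(x_{\eps_j})\le J_1(\psi)(x_{\eps_j})+o(\eps_j^2)$ and, after expanding and passing to the limit, $-\Delta_p^1\psi(x_0)+h_1(x_0)\le0$ (with $\lambda_N(D^2\psi(x_0))$ in the degenerate case, matching $(F_1)_*$); the analogous computation with $v^{\eps_j}(x_{\eps_j})=J_2(v^{\eps_j})(x_{\eps_j})$ and a test function touching $v$ from below yields $-\Delta_q^1\phi(x_0)-h_2(x_0)\ge0$ there. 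This gives all the items of Definition \ref{def.sol.viscosa.system}.

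I expect the genuine obstacle to be the bookkeeping in the gradient-vanishing case --- checking that the Taylor estimates for $\half\sup+\half\inf$ deliver exactly the eigenvalues of $D^2\phi(x_0)$ that appear in $(F_i)^*$ and $(F_i)_*$, uniformly as $x_{\eps_j}\to x_0$ --- so I would simply quote the computations of \cite{MPRb} and \cite{BlancPR} rather than redo them. The decoupling observation in $\{u>v\}$, by contrast, is short and is the only place where the system structure (as opposed to the single-equation theory) really enters.
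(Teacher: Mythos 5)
Your proposal is correct and follows essentially the same route as the paper: uniform convergence for the ordering and boundary data, the standard DPP-plus-Taylor argument for the global supersolution (resp.\ subsolution) property with the eigenvalue envelopes in the degenerate-gradient case, and the decoupling of the max/min in $\{u>v\}$ via the limits $J_1(u^{\eps_j})\to u$, $J_2(v^{\eps_j})\to v$. The only cosmetic difference is that you obtain the decoupling by contradiction at the points $x_{\eps_j}$, whereas the paper establishes $u^{\eps}=J_1(u^{\eps})$ on a whole ball $B_{\delta/2}(x_0)$ for small $\eps$; both rest on the same key limits.
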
 

\begin{proof} We divide the proof in several cases.

{\bf 1) $u$ and $v$ are ordered.} From the fact that 
$$
u^{\varepsilon_j} \geq v^{\varepsilon_j}
$$
in $\overline{\Omega}$ and the uniform convergence we immediately get
$$
u \geq v
$$
in $\overline{\Omega}$.

{\bf 2) The boundary conditions.} As we have that
$$
u^{\varepsilon_j} = f, \qquad  v^{\varepsilon_j} = g,
$$
in $\mathbb{R}^N \setminus \Omega$ we get
$$
	u|_{\partial \Omega} = f, \qquad v|_{\partial \Omega} = g,
$$
	
	\textbf{3) The equation for $u$.} First, let us show that $u$ is a viscosity supersolution to 
	$$
	-\Delta_p^1 u(x)+h_1(x)= 0
	$$ 
	for $x\in \Omega$. To this end, consider a point $x_0\in \Omega$ and a smooth function
	$\varphi\in C^2(\Omega)$ such that $(u-\varphi)(x_0)=0$ is a strict minimum of $(u-\varphi)$. Then, from the uniform convergence there exists sequence of points, that we will denote by $(x_{\eps})_{\eps>0}$, such that $x_{\eps}\rightarrow x_0$ and 
	\begin{equation}
	(u^{\eps}-\varphi)(x_{\eps})\leq(u^{\eps}-\varphi)(y)+o(\eps^2),
	\end{equation}
	that is, 
	\begin{equation}
	\label{ump}
	u^{\eps}(y)-u^{\eps}(x_{\eps})\geq\varphi(y)-\varphi(x_{\eps})-o(\eps^2).
	\end{equation}
	From the DPP \eqref{DPP} we have 
	\begin{equation}
	\label{desDPP}
	0=\max\Big\{ J_1(u^{\eps})(x_{\eps})-u(x_{\eps}), J_2(v^{\eps})(x_{\eps})-u^{\eps}(x_{\eps}) \Big\}\geq J_1(u^{\eps})(x_{\eps})-u^{\eps}(x_{\eps}),
	\end{equation}
	Writting $J_1(u^{\eps})(x_{\eps})-u(x_{\eps})$ we obtain
	\begin{equation}
	\label{J1menosu}
	\begin{array}{ll}
	\displaystyle J_1(u^{\eps})(x_{\eps})-u^{\eps}(x_{\eps}) \\[8pt]
	\displaystyle = \alpha_1\Big[\half \sup_{y \in B_{\eps}(x_{\eps})}(u^{\eps}(y)-u^{\eps}(x_{\eps})) + \half \inf_{y \in B_{\eps}(x_{\eps})}(u^{\eps}(y)-u^{\eps}(x_{\eps}))\Big]
	\\[10pt]
	\qquad \displaystyle +(1-\alpha_1)\kint_{B_{\eps}(x_{\eps})}(u^{\eps}(y)-u^{\eps}(x_{\eps}))dy-\eps^2h_1(x_{\eps}),
	\end{array}
	\end{equation}
	and then, using \eqref{ump}, we get
		\begin{equation}
	\label{J1conphi}
	\begin{array}{ll}
	\displaystyle J_1(u^{\eps})(x_{\eps})-u^{\eps}(x_{\eps}) \\[8pt]
	\displaystyle \geq \alpha_1\underbrace{\Big[\half \sup_{y \in B_{\eps}(x_{\eps})}(\varphi(y)-\varphi(x_{\eps})) + \half \inf_{y \in B_{\eps}(x_{\eps})}(\varphi(y)-\varphi(x_{\eps}))\Big]}_{I} \\[8pt]
	\qquad \displaystyle
	+\beta_1\underbrace{\kint_{B_{\eps}(x_{\eps})}(\varphi(y)-\varphi(x_{\eps}))dy}_{II}-\eps^2h_1(x_{\eps})+o(\eps^2).
	\end{array}
	\end{equation}
	Let us analyze $I$ and $II$. We begin with $I$: 
	Assume that $\nabla\varphi(x_0)\neq 0$. Let $z_{\eps}\in B_1(0)$ be such that
	$$
	\max_{y \in \ol{B}_{\eps}(x_{\eps})}\varphi(y)=\varphi(x_{\eps}+\eps z_{\eps}).
	$$
	Then, we have
	\begin{equation}
	I=\half(\varphi(x_{\eps}+\eps z_{\eps})-\varphi(x_{\eps}))+\half(\varphi(x_{\eps}-\eps z_{\eps})-\varphi(x_{\eps}))+o(\eps^2).
	\end{equation}
	From a simple Taylor expansion we conclude that
	\begin{equation}
	\half(\varphi(x_{\eps}+\eps z_{\eps})-\varphi(x_{\eps}))+\half(\varphi(x_{\eps}-\eps z_{\eps})-\varphi(x_{\eps}))+o(\eps^2)=\half\eps^2\langle D^2\varphi(x_{\eps})z_{\eps},z_{\eps}\rangle +o({\eps^2}).
	\end{equation}
	Dividing by $\eps^2$ the first inequality and taking the limit as $\eps\rightarrow 0$ (see \cite{nosotros})
	we arrive to  
	\begin{equation}
	\label{limI}
	I\rightarrow \half\Delta^1_{\infty}\varphi(x_0)
	\end{equation}
	
	When $\nabla\varphi=0$ arguing again using Taylor's expansions we get
	$$
	\limsup I \geq \half \lambda_1 (D^2 \varphi (x_0))
	$$
	see \cite{BRLibro} for more details.

	Now, we look at $II$: Using again Taylor's expansions we obtain
	$$ \kint_{B_{\eps}(x_{\eps})}(\varphi(y)-\varphi(x_{\eps}))dy=\frac{\eps^2}{2(N+1)}\Delta \varphi(x_{\eps})+o(\eps^2),$$
	Dividing by $\eps^2$ and taking limits as $\eps \rightarrow 0$ we get
	\begin{equation}
	\label{limII}
	\displaystyle II\rightarrow \frac{1}{2(N+2)}\Delta \varphi(x_{0}).
	\end{equation}
	
	Therefore, if we come back to \eqref{desDPP}, dividing by $\eps^2$ and taking limit $\eps\rightarrow 0$ we obtain
	\begin{equation}
	0\geq \frac{\alpha_1}{2}\Delta^1_{\infty}\varphi(x_0)+\frac{1-\alpha_1}{2(N+1)}\Delta\varphi(x_0)-h_1(x_0)
	\end{equation} 
	when $\nabla \varphi (x_0) \neq 0$ and 
	\begin{equation}
	0\geq \frac{\alpha_1}{2}\lambda_1(D^2\varphi(x_0))+\frac{1-\alpha_1}{2(N+1)}\Delta\varphi(x_0)-h_1(x_0)
	\end{equation} 
	when $\nabla \varphi (x_0) = 0$.
	
	Using the definition of the normalized $p-$Laplacian we have arrived to 
	\begin{equation}
	-\Delta_p^1\varphi(x_0)+h_1(x_0)\geq 0
	\end{equation}
	in the sense of Definition \ref{def.sol.viscosa.1}.

	Now we are going to prove that $u$ is viscosity solution to
	\begin{equation}
	\label{parmonica}
	-\Delta_p^1 u(x)+h_1(x)= 0
	\end{equation}
	in the set $\Om\cap\{ u>v \}$. Let us consider $x_0\in\Om\cap\{ u>v \}$. Let $\eta>0$ be such that 
	\begin{equation}
	u(x_0)\geq v(x_0)+3\eta. 
	\end{equation}
	Then, using that $u$ and $v$ are continuous functions, there exists $\delta>0$ such that
	\begin{equation}
	u(y)\geq v(y)+2\eta \quad \mbox{for all} \quad y\in B_{\delta}(x_0),
	\end{equation}
	and, using that $u^{\eps}\rightrightarrows u$ and $v^{\eps}\rightrightarrows v$ we have
	\begin{equation}
	u^{\eps}(y)\geq v^{\eps}(y)+\eta \quad \mbox{for all} \quad y\in B_{\delta}(x_0)  
	\end{equation}
	 for $0<\eps<\eps_0$ for some $\eps_0>0$. Given $z\in B_{\frac{\delta}{2}}(x_0)$ and $\eps<\min\{\eps_0,\frac{\delta}{2}\}$ we obtain 
	 	\begin{equation}
	 	B_{\eps}(z)\subset B_{\delta}(x_0).
	 	\end{equation}
	Using that $u^{\eps}\rightrightarrows u$ we obtain the following limits:
	\begin{equation} \label{Claim1}
	\displaystyle \sup_{y \in B_{\eps}(z)}u^{\eps}(y)\rightarrow u(z), \qquad \mbox{ as } \eps\rightarrow 0 
	\end{equation}
	In fact, from our previous estimates we have that 
	\begin{equation}
	\Big|\sup_{y \in B_{\eps}(z)}u^{\eps}(y)-u(z) \Big|\leq \sup_{y \in B_{\eps}(z)}|u^{\eps}(y)-u(y)|+
	\sup_{y \in B_{\eps}(z)}|u(y)-u(z)|
	\end{equation}
	Using that $u^{\eps}\rightrightarrows u$, exists $\eps_1>0$ such that if $\eps<\eps_1$
	\begin{equation}
	|(u^{\eps}-u)(x)|<\frac{\theta}{2} \quad \mbox{for all} \quad x\in\Om.
	\end{equation}
	Now, using that $u$ is continuous, exists $\eps_2>0$ such that
	\begin{equation}
	|u(y)-u(z)|<\frac{\theta}{2} \quad \mbox{if} \quad |y-z|<\eps_2,
	\end{equation}
	thus, if we take $\eps<\min\{\eps_0,\eps_1,\eps_2,\frac{\delta}{2} \}$ we obtain
	\begin{equation}
	\Big|\sup_{y \in B_{\eps}(z)}u^{\eps}(y)-u(z)\Big|<\theta.
	\end{equation}
	This proves \eqref{Claim1}.
	
	Also, with a similar argument, we have, 
		\begin{equation} \label{Claim2}
	\displaystyle \inf_{y \in B_{\eps}(z)}u^{\eps}(y)\rightarrow_{\eps\rightarrow 0} u(z).
	\end{equation}
	
	Finally, we get,
		\begin{equation} \label{Claim3}
	\displaystyle \kint_{B_{\eps}(z)}u^{\eps}(y)dy\rightarrow_{\eps\rightarrow 0} u(z).
	\end{equation}
	In fact, let us compute
	\begin{equation}
	\left|\kint_{B_{\eps}(z)}u^{\eps}(y)dy-u(z)\right|
	\leq \kint_{B_{\eps}(z)} \left| u^{\eps}(y)-u(y) \right| dy+\kint_{B_{\eps}(z)} \left| 
	u(y)-u(z)\right| dz.
	\end{equation}
	Now we use again that $u^{\eps}\rightrightarrows u$ and that $u$ is a continuous function to obtain 
	$$\kint_{B_{\eps}(z)}|u^{\eps}(y)-u(y)|dy<\frac{\theta}{2} \qquad \mbox{ and }\qquad 
	\kint_{B_{\eps}(z)}|u(y)-u(z)|dz<\frac{\theta}{2}$$ for $\eps>0$ small enough. Thus we obtain
	\begin{equation}
	\left|\kint_{B_{\eps}(z)}u^{\eps}(y)dy-u(z) \right|<\theta.
	\end{equation}

	Using the previous limits, \eqref{Claim1}, \eqref{Claim2} and \eqref{Claim3} we obtain
	\begin{equation} 
	J_1(u^{\eps})(z)\rightarrow u(z) \qquad \mbox{ as } \eps\rightarrow 0. 
	\end{equation}
	
	Analogously, we can prove that
	\begin{equation}
	J_2(v^{\eps})(z)\rightarrow v(z), \qquad \mbox{ as } \eps\rightarrow 0.
	\end{equation}
	
	Now, if we recall that $u(z)\geq v(z)+2\eta$, we obtain
	\begin{equation} \label{river-plate}
	J_1(u^{\eps})(z)\geq J_2(v^{\eps})(z)+\eta,
	\end{equation}
	if $\eps>0$ is small enough. Then, using de DPP we obtain
	\begin{equation}
	\label{DPPu}
	u^{\eps}(z)=\max\{J_1(u^{\eps})(z),J_2(v^{\eps})(z)\}=J_1(u^{\eps})(z),
	\end{equation}
	for all $z\in B_{\frac{\delta}{2}}(x_0)$ and for every $\eps>0$ small enough. 
	Let us prove that $u$ is  viscosity subsolution of the equation \eqref{parmonica}. Given now $\varphi\in\mathscr{C}^2(\Om)$ such that $(u-\varphi)(x_0)=0$ is maximum of $u-\varphi$. Then, from the uniform convergence there exists sequence of points $(x_{\eps})_{\eps>0}\subset B_{\frac{\delta}{2}}(x_0)$, such that $x_{\eps}\rightarrow x_0$ and 
	\begin{equation}
	(u^{\eps}-\varphi)(x_{\eps})\geq(u^{\eps}-\varphi)(y)+o(\eps^2),
	\end{equation}
	that is, 
	\begin{equation}
	\label{ump2}
	u^{\eps}(y)-u^{\eps}(x_{\eps})\leq\varphi(y)-\varphi(x_{\eps})-o(\eps^2).
	\end{equation}
		From the DPP \eqref{DPP} we have 
	\begin{equation}
	\label{igualDPP}
	0=\max\Big\{ J_1(u^{\eps})(x_{\eps})-u(x_{\eps}), J_2(v^{\eps})(x_{\eps})-u^{\eps}(x_{\eps}) \Big\}= J_1(u^{\eps})(x_{\eps})-u^{\eps}(x_{\eps}),
	\end{equation}
	Writting $J_1(u^{\eps})(x_{\eps})-u(x_{\eps})$ we obtain
	\begin{equation}
	\label{J1menosuigual}
	\begin{array}{ll}
	\displaystyle J_1(u^{\eps})(x_{\eps})-u^{\eps}(x_{\eps}) \\[8pt]
	\displaystyle = \alpha_1\Big[\half \sup_{y \in B_{\eps}(x_{\eps})}(u^{\eps}(y)-u^{\eps}(x_{\eps})) + \half \inf_{y \in B_{\eps}(x_{\eps})}(u^{\eps}(y)-u^{\eps}(x_{\eps}))\Big]
	\\[10pt]
	\displaystyle \qquad
	+(1-\alpha_1)\kint_{B_{\eps}(x_{\eps})}(u^{\eps}(y)-u^{\eps}(x_{\eps}))dy-\eps^2h_1(x_{\eps}),
	\end{array}
	\end{equation}
		and then, using \eqref{ump2}, we get
	\begin{equation}
	\label{J1conphi2}
	\begin{array}{ll}
	\displaystyle J_1(u^{\eps})(x_{\eps})-u^{\eps}(x_{\eps}) \\[8pt]
	\displaystyle \leq \alpha_1\Big[\half \sup_{y \in B_{\eps}(x_{\eps})}(\varphi(y)-\varphi(x_{\eps})) + \half \inf_{y \in B_{\eps}(x_{\eps})}(\varphi(y)-\varphi(x_{\eps}))\Big] \\[8pt]
	\qquad \displaystyle +(1-\alpha_1)\kint_{B_{\eps}(x_{\eps})}(\varphi(y)-\varphi(x_{\eps}))dy-\eps^2h_1(x_{\eps})+o(\eps^2).
	\end{array}
	\end{equation}
	Passing to the limit as before we obtain 
	\begin{equation}
	0\leq \frac{\alpha_1}{2}\Delta^1_{\infty}\varphi(x_0)+\frac{(1-\alpha_1)}{2(N+1)}\Delta\varphi(x_0)-h_1(x_0)
	\end{equation} 
	when $\nabla \varphi (x_0) \neq 0$ and 
	\begin{equation}
	0\leq \frac{\alpha_1}{2}\lambda_N (D^2 \varphi(x_0))+\frac{(1-\alpha_1)}{2(N+1)}\Delta\varphi(x_0)-h_1(x_0)
	\end{equation}
	if $\nabla \varphi (x_0) = 0$.
	Hence we arrived to 
	\begin{equation}
	-\Delta_p^1\varphi(x_0)+h_1(x_0)\leq 0,
	\end{equation}
	according to Definition \ref{def.sol.viscosa.1}. 
	This proves that $u$ is viscosity subsolution of the equation \eqref{parmonica}
	inside the open set $\{u>v\}$.
	
	 As we have that $u$ is a viscosity supersolution in the whole $\Omega$, we conclude that 
	 $u$ is viscosity solution to
	\begin{equation}
	-\Delta^1_p u(x_0)+h_1(x_0)= 0,
	\end{equation} 
    in the set $\{u>v\}$.
	
	\textbf{4) The equation for $v$.} The case that $v$ is a viscosity subsolution to 
	$$
	-\Delta^1_q v(x)+h_2(x)= 0
	$$ 
	is analogous. Here we use that 
	\begin{equation}
	\label{desDPP-v}
	0=\min\Big\{ J_2(v^{\eps})(x_{\eps})-v(x_{\eps}), J_1(u^{\eps})(x_{\eps})-v^{\eps}(x_{\eps}) \Big\}\leq J_2(v^{\eps})(x_{\eps})-v^{\eps}(x_{\eps}).
	\end{equation}
	
	To show that $v$ is a viscosity solution 
	\begin{equation}
	-\Delta_q^1 v(x_0)-h_2(x_0)= 0,
	\end{equation}
	if $x_0\in\Om\cap\{u>v\}$ we proceed as before. 
\end{proof}

\section{A game that gives an extra condition on the contact set}
\label{sect-second-game}

in this section we will study the value functions of the second game. 
In this case, they are given by a
 pair of functions $(u^{\eps},v^{\eps})$ that verifies the (DPP)
  \begin{equation}
 \label{DPPEXT2}
 \left\lbrace
 \begin{array}{ll}
 \displaystyle u^{\eps}(x)=\half\max\Big\{ J_1(u^{\eps})(x), J_2(v^{\eps})(x)\Big\}+\half J_1(u^{\eps})(x) 
 &  x \in \Omega,  \\[10pt]
 \displaystyle v^{\eps}(x)=\half\min\Big\{ J_1(u^{\eps})(x), J_2(v^{\eps})(x)\Big\} +\half J_2(v^{\eps})(x)
 &  x \in \Omega,  \\[10pt]
 u^{\eps}(x) = f(x) \qquad & x \in \R^{N} \backslash \Omega,  \\[10pt]
 v^{\eps}(x) = g(x) \qquad &  x \in \R^{N} \backslash \Omega.
 \end{array}
 \right.
 \end{equation}
Is clear from the (DPP) that
\begin{equation}
	u^{\eps}\ge v^{\eps}.
\end{equation}
We aim to show that these functions converge (along subsequences $\eps_j \to 0$) to a pair of functions $(u,v)$ that is a viscosity solution to the system
 \begin{equation}
 \label{EDEX2}
\left\lbrace
\begin{array}{ll}
\displaystyle u (x) \geq v(x), \qquad & \ \Omega, \\[10pt]
\displaystyle -\Delta_{p}^{1}u(x)+ h_1(x)\geq 0, \quad  \quad -\Delta_q^1 v(x)- h_2(x)\leq 0 ,\qquad & \ \Om,\\[10pt] 
\displaystyle -\Delta_p^1 u(x)+ h_1(x)=0, \quad  \quad -\Delta_q^1 v(x)- h_2(x)=0,\qquad & \ \{u>v\}\cap\Om,\\[10pt]
\displaystyle \big(-\Delta_{p}^{1}u(x)+ h_1(x)\big)+\big(-\Delta_q^1 v(x)- h_2(x)\big) =0, & \ x\in \Om,\\[10pt]
u(x) = f(x), \qquad & \ x \in \partial \Omega,  \\[10pt]
v(x) = g(x), \qquad & \ x \in \partial \Omega.
\end{array}
\right.
\end{equation}
Notice that here we have the classical formulation of the two membranes problem, 
but with the extra condition 
$$
\big(-\Delta_{p}^{1}u(x)+ h_1(x)\big)+\big(-\Delta_q^1 v(x)- h_2(x)\big) =0 
$$
that is meaningful for $x \in \{u(x) = v(x) \}$.

The existence and uniqueness of the pair of functions $(u^{\eps},v^{\eps})$ can be proved as before. In fact, we can reproduce the arguments of Perron's method to obtain existence
of a solution. Next, we show that given a solution to the (DPP) we can build quasioptimal
strategies and show that the game has a value and that this value coincides with 
the solution to the (DPP), from where uniqueness of solutions to the (DPP) follows.

Uniform convergence also follows with the same arguments used before using the Arzela-Ascoli type result together with the estimates close to the boundary proved in the previous section. 
Notice that here we can prescribe the same strategies as the ones used before. For example, Player I may decide to stay in the first board (if the coin toss allows a choice) and to point to a prescribed point when Tug-of-War is played. Also remark that the crucial bound on the expected number of plays given in Lemma \ref{TEC1}
 can also be used here to obtain a bound for the total number of plays in the variant of the game.

Passing to the limit in the viscosity sense is also analogous. One only has to pay special attention to the extra condition. Therefore,
let us prove now that the extra condition in \eqref{EDEX2},
\begin{equation}
\displaystyle \big(-\Delta_{p}^{1}u(x)+ h_1(x)\big)+\big(-\Delta_q^1 v(x)- h_2(x)\big) =0 
\end{equation}
holds in the viscosity sense, in the set $\{x: u(x)=v(x)\}$. 

Let us start proving the subsolution case. 
Given $x_0\in\{u=v\}$ and $\varphi\in\mathscr{C}^2(\Om)$ such that $(u-\varphi)(x_0)=0$ is maximum of $u-\varphi$. 
Notice that since $v(x_0)=u(x_0)$ and $v\leq u$ in $\Omega$ we also have that 
$(v-\varphi)(x_0)=0$ is maximum of $v-\varphi$. 
Then, from the uniform convergence there exists sequence of points $(x_{\eps})_{\eps>0}\subset B_{\frac{\delta}{2}}(x_0)$, such that $x_{\eps}\rightarrow x_0$ and 
\begin{equation}
	\label{umenosphi}
(u^{\eps}-\varphi)(x_{\eps})\geq(u^{\eps}-\varphi)(y)+o(\eps^2),
\end{equation}
\textbf{Case 1:} Suppose that $u^{\eps}(x_{\eps_j})>v^{\eps}(x_{\eps_j})$ for a subsequence such that $\eps_j\rightarrow 0$. Let us observe that, 
if $$J_1(u^{\eps})(z)<J_2(v^{\eps})(z)$$ 
we have that
\begin{equation}
	u^{\eps}(z)=\half J_1(u^{\eps})(z)+\half J_2(v^{\eps})(z) \quad \mbox{and} \quad 	v^{\eps}(z)=\half J_1(u^{\eps})(z)+\half J_2(v^{\eps})(z),
\end{equation}
and then we get $$u^{\eps}(z)=v^{\eps}(z)$$
in this case. 

This remark implies that when $u^{\eps}(x_{\eps_j})>v^{\eps}(x_{\eps_j})$ we have
\begin{equation}
	J_1(u^{\eps})(x_{\eps_j})\geq J_2(v^{\eps})(x_{\eps_j}).
\end{equation}
If we use the DPP \eqref{DPPEXT2} we get
\begin{equation}
	\begin{array}{ll}
		\displaystyle 0=\half(J_1(u^{\eps})(x_{\eps_j})-u^{\eps}(x_{\eps_j}))+\half \max\{J_1(u^{\eps})(x_{\eps_j})-u^{\eps}(x_{\eps_j}) , J_2(v^{\eps})(x_{\eps_j})-u^{\eps}(x_{\eps_j})\} \\[8pt]
		\displaystyle =\half(J_1(u^{\eps})(x_{\eps_j})-u^{\eps}(x_{\eps_j}))+\half J_1(u^{\eps})(x_{\eps_j})-u^{\eps}(x_{\eps_j})=J_1(u^{\eps})(x_{\eps_j})-u^{\eps}(x_{\eps_j}),
	\end{array}
\end{equation}
and using \eqref{umenosphi} we obtain
\begin{equation}
	0=J_1(u^{\eps})(x_{\eps_j})-u^{\eps}(x_{\eps_j})\leq J_1(\varphi)(x_{\eps_j})-\varphi(x_{\eps_j}),
\end{equation}
taking limit as $\eps_j\rightarrow 0$ as before we get 
\begin{equation} \label{ine-1}
	-\Delta^1_{p}\varphi(x_0)+h_1(x_0)\leq 0.
\end{equation}

We have proved before that $v$ is a subsolution to 
$$
-\Delta^1_{q}v(x)-h_2(x) = 0
$$
in the whole $\Omega$.
Therefore, as $(v-\varphi)(x_0)=0$ is a maximum of $v-\varphi$ we get 
\begin{equation} \label{ine-2}
	-\Delta^1_{q}\varphi(x_0)-h_2(x_0)\leq 0.
\end{equation}

Thus, from \eqref{ine-1} and \eqref{ine-2} we conclude that
\begin{equation}
(-\Delta^1_{p}\varphi(x_0)+h_1(x_0))+	(-\Delta^1_{q}\varphi(x_0)-h_2(x_0))\leq 0.
\end{equation}

\textbf{Case 2:} If $u^{\eps}(x_{\eps})=v^{\eps}(x_{\eps})$ for $\eps<\eps_0$.
Using the DPP \eqref{DPPEXT2} we have
\begin{equation}
	\begin{array}{ll}
		\displaystyle u^{\eps}(x_{\eps})=\half J_1(u^{\eps})(x_{\eps})+\half J_2(v^{\eps})(x_{\eps}),  \\[10pt]
		\displaystyle v^{\eps}(x_{\eps})=\half J_1(u^{\eps})(x_{\eps}) +\half J_2(v^{\eps})(x_{\eps}),  	
		\end{array}
\end{equation}
then we get
\begin{equation}
	\begin{array}{ll}
		\displaystyle \max\{J_1(u^{\eps})(x_{\eps}), J_2(v^{\eps})(x_{\eps})\}=J_2(v^{\eps})(x_{\eps}),  \\[10pt]
		\displaystyle \min\{J_1(u^{\eps})(x_{\eps}), J_2(v^{\eps})(x_{\eps})\}=J_1(u^{\eps})(x_{\eps}).
	\end{array}
\end{equation}

If we use again \eqref{umenosphi} we get
\begin{equation}
	\begin{array}{ll}
			\varphi(y)-\varphi(x_{\eps})\geq u^{\eps}(y)-u^{\eps}(x_{\eps})+o(\eps^2) \\[8pt]
			\geq v^{\eps}(y)-v^{\eps}(x_{\eps})+o(\eps^2), 
	\end{array}
\end{equation}
here we used that $u^{\eps}\geq v^{\eps}$ and $u^{\eps}(x_{\eps})= v^{\eps}(x_{\eps})$. Thus
\begin{equation}
	\begin{array}{ll}
	\displaystyle 0=\half \big(J_1(u^{\eps})(x_{\eps})-u^{\eps}(x_{\eps})\big)+\half \big(J_2(v^{\eps})(x_{\eps})-v^{\eps}(x_{\eps})\big)\\[8pt]
	\displaystyle	\leq \half \big(J_1(\varphi)(x_{\eps})-\varphi(x_{\eps})\big)+\half \big(J_2(\varphi)(x_{\eps})-\varphi(x_{\eps})\big).
	\end{array}
\end{equation}
Taking limit $\eps\rightarrow 0$ we obtain
\begin{equation}
	(-\Delta^1_{p}\varphi(x_0)+h_1(x_0))+	(-\Delta^1_{q}\varphi(x_0)-h_2(x_0))\leq 0,
\end{equation}
in the viscosity sense (taking care of the semicontinuous envelopes when the gradient of $\varphi$ vanishes). 
We have just proved that the extra condition is verified with an inequality
when we touch $u$ and $v$ from above at some point $x_0$ with a smooth test function. 

The proof that other inequality holds when we touch $u$ and $v$ from below is analogous
and hence we omit the details.

\section{Final remarks}
\label{sect-remarks}

Below we gather some brief comments on possible extensions of our results. 

\subsection{$n$ membranes} We can extend our results to the case in which we have $n$ membranes. For the PDE problem we refer to \cite{ARS,CChV,ChV}.

	We can generalize the game to an $n$-dimensional system. Let us suppose that we have for $1\leq k\leq n$
	 \begin{equation}
	 	J_k(w)(x)=\alpha_k\Big[\half \sup_{y \in B_{\eps}(x)}w(y) + \half \inf_{y \in B_{\eps}(x)}w(y)\Big]+(1-\alpha_k)\kint_{B_{\eps}(x)}w(y)dy-\eps^2h_k(x)
	 \end{equation}
 These games have associated the operators
 \begin{equation}
 	L_k(w)=-\Delta^1_{p_k}w+h_k.
 \end{equation}

Given $f_1\geq f_2\geq\dots\geq f_n$ defined outside $\Om$, we can consider the (DPP)
\begin{equation}
	\label{DPPEXTn}
	\left\lbrace
	\begin{array}{ll}
		\displaystyle u_k^{\eps}(x)=\half\max_{i\geq k}\Big\{ J_i(u_i^{\eps})\Big\}+\half \min_{l\leq k}\Big\{ J_l(u_l^{\eps})\Big\} ,
		&  x \in \Omega,  \\[10pt]
		u_k^{\eps}(x) = f_k(x), \qquad & x \in \R^{N} \backslash \Omega.
	\end{array}
	\right.
\end{equation}
for $1\leq k\leq n$.

This (DPP) is associated to a game that is played in $n$ boards. In board $k$ a fair coin is tossed and the winner is allowed to change boards but Player I can only choose to change 
to a board with index bigger or equal than $k$ while Player II may choose a board with
index smaller or equal than $k$.

The functions $(u_1^{\eps}, \cdots, u_n^{\eps})$ converge uniformly
as $\eps \to 0$ (along a subsequence) to continuous functions $\{u_k\}_{1\leq k\leq n}$ that are viscosity solutions to
the $n$ membranes problem,
	\begin{equation}
		\label{EDEXn}
		\left\lbrace
		\begin{array}{ll}
			\displaystyle u_k(x) \geq u_{k+1}(x), \qquad & \  x\in\Omega, \\[10pt]
				\displaystyle L_k(u_k)\geq 0, \quad  \quad L_{k+l}(u_{k+l})\leq 0 \qquad & \ x\in\{u_{k-1}>u_k\equiv u_{k+1}\equiv\dots\equiv u_{k+l}>u_{k+l+1}\}\cap\Om,\\[10pt] 
			\displaystyle L_k(u_k)+L_{k+l}(u_{k+l}) =0, & \  x\in\{u_{k-1}>u_k\equiv u_{k+1}\equiv\dots\equiv u_{k+l}>u_{k+l+1}\}\cap\Om,\\[10pt]
			\displaystyle L_k(u_k)=0, & \ x\in\{u_{k-1}>u_k>u_{k+1}\}\cap\Om,\\[10pt]
			u_k(x) = f_k(x), \qquad & \ x \in \partial \Omega.
		\end{array}
		\right.
	\end{equation}
for $1\leq k\leq n$. 

Notice that here the extra condition 
$$L_k(u_k)+L_{k+l}(u_{k+l}) =0, \qquad \  x\in\{u_{k-1}>u_k\equiv u_{k+1}\equiv\dots\equiv u_{k+l}>u_{k+l+1}\}\cap\Om $$
appears.

\subsection{Other operators} 
Our results can also be extended to the two membranes problem with 
different operators as soon as there are games $J_1$ and $J_2$ whose value
functions approximate the solutions to the corresponding PDEs and for which 
the key estimates of Section \ref{sect-first-game} can be proved. Namely, we need that
starting close to the boundary each player has a strategy that forces the game to end close
to the initial position in the same board with large probability and in a controlled expected number of plays regardless the choices of the other player.

For instance, our results can be extended to deal with the two membranes problem for Pucci operators (for a game related to Pucci operators we refer to \cite{BMR}). Pucci operators are uniformly elliptic and are given in terms of two positive constants, $\lambda$ and $\Lambda$
by the formulas
$$
M_{\lambda, \Lambda}^{+}(D^2 u) = \sup_{A\in L_{\lambda, \Lambda}}
tr(AD^2u) \qquad \mbox{and} \qquad 
M^{-}_{\lambda, \Lambda} (D^2 u) = \inf_{A\in L_{\lambda, \Lambda}} tr(AD^2u)
$$
with
$$
L_{\lambda, \Lambda} = \Big\{ A \in \mathbb{S}^n : \lambda Id \leq A \leq \Lambda Id
\Big\}.
$$

Notice that the extra condition that we obtain with the second game 
reads as
$$
M_{\lambda_1, \Lambda_1}^{+}(D^2 u(x)) + M_{\lambda_2, \Lambda_2}^{-}(D^2 v(x))
= h_2 (x) - h_1 (x)
$$
if we play with a game associated to the equation $M_{\lambda_1, \Lambda_1}^{+}(D^2 u(x)) + h_1(x)=0$
in the first board and with a game associated to 
$M_{\lambda_2, \Lambda_2}^{-}(D^2 u(x)) + h_1(x)=0$ in the second board.

We leave the details to the reader.

\subsection{Playing with an unfair coin modifies the extra condition}
One can also deal with the game in which the coin toss that is used to
determine if the player can make the choice to change boards or not
is not a fair coin. Assume that a coin is tossed in the first
board with probabilities $\gamma$ and $(1-\gamma)$
and in the second board with reverse probabilities, $(1-\gamma)$ and $\gamma$.
In this case the equations that are involved in the DPP read as
 \begin{equation}
	 	\label{DPPEXT.rem}
	 	\left\lbrace
	 	\begin{array}{ll}
	 		\displaystyle u^{\eps}(x)=\gamma \max\Big\{ J_1(u^{\eps})(x), J_2(v^{\eps})(x)\Big\}+ (1-\gamma) J_1(u^{\eps})(x) , \qquad
	 		&  x \in \Omega,  \\[10pt]
	 		\displaystyle v^{\eps}(x)=(1-\gamma) \min\Big\{ J_1(u^{\eps})(x), J_2(v^{\eps})(x)\Big\} + \gamma  J_2(v^{\eps})(x),
	 		&  x \in \Omega.
	 	\end{array}
	 	\right.
	 \end{equation}

	 In this case, the extra condition that we obtain is given by
	 \begin{equation} \label{extra-cond.intro.rem}
	 \gamma \big(-\Delta_{p}^{1}u(x)+ h_1(x)\big)+ (1-\gamma)\big(-\Delta_q^1 v(x)- h_2(x)\big) =0, \qquad \ x\in \Om,
	\end{equation}
	
	Notice that there are two extreme cases, $\gamma =0$ and $\gamma=1$. 
	When $\gamma =1$,  the second player can not decide to change boards
	but the first player has this possibility (with probability one) in the first board. In this case, in the limit problem the second component, $v$, is a solution to
	$-\Delta_q^1 v(x)- h_2(x)=0$ in the whole $\Omega$ and $u$ is the solution to the obstacle problem (with $v$ as obstacle from below). On the other hand, if $\gamma =0$, it is the first player who can not decide to change and the second player has the command in the second board and in this case in the limit
	it is $u$ the component that is a solution to the equation, $
	-\Delta_{p}^{1}u(x)+ h_1(x)=0$, and $v$ the one that solves the obstacle problem (with $u$ as obstacle from above). 
	
	Remark that the value functions are increasing with respect to $\gamma$, that is,
	 $u^{\eps}_{\gamma_1} (x) \leq u^{\eps}_{\gamma_2}(x)$ and 
	 $v^{\eps}_{\gamma_1} (x) \leq v^{\eps}_{\gamma_2}(x)$ for $\gamma_1 \leq \gamma_2$. Therefore, passing to the limit as $\eps \to 0$ we obtain a family
	 of solutions to the two membranes problem that is increasing with $\gamma$, 
$$u_{0} (x) \leq u_{\gamma_1} (x) \leq u_{\gamma_2}(x) \leq u_1 (x)$$ and 
	 $$v_0 (x) \leq v_{\gamma_1} (x) \leq v_{\gamma_2}(x) \leq v_1(x)$$
	  for $\gamma_1 \leq \gamma_2$.
	  
	  The pair $(u_0,v_0)$ is the minimal solution to the two membranes problem
	  in the sense that $u_0 \leq u $ and $v_0 \leq v$ for any other solution $(u,v)$.
	  In fact, since $u$ is a supersolution and $u_0$ is a solution to 
	  $
	-\Delta_{p}^{1}u(x)+ h_1(x)=0$ from the comparison principle we obtain $u_0 \leq u$.
	Then, we obtain that $v_0 \leq v$ from the fact that they are solutions to the obstacle problem from above with obstacles $u_0$ and $u$ respectively. 
	
	Analogously, the pair $(u_1,v_1)$ is the maximal solution to the two membranes problem.

\bigskip

{\bf Acknowledgements.} partially supported by 
CONICET grant PIP GI No 11220150100036CO
(Argentina), PICT-2018-03183 (Argentina) and UBACyT grant 20020160100155BA (Argentina).


\begin{thebibliography}{BH}


\bibitem{TPSS} T. Antunovic, Y. Peres, S. Sheffield and S. Somersille. {\it Tug-of-war and infinity Laplace equation with vanishing Neumann boundary condition}. Comm. Partial Differential Equations, 37(10), 2012, 1839--1869.

\bibitem{Arroyo} A. Arroyo and J. G. Llorente. {\it On the asymptotic mean value property for planar p-harmonic functions}. Proc. Amer. Math. Soc., 
144(9), (2016), 3859--3868.


\bibitem{AS} S. N. Armstrong and C. K. Smart. {\it An easy proof of Jensen's theorem on the uniqueness of infinity harmonic functions.}
Calc. Var. Partial Differential Equations, 37(3-4), (2010), 381--384.

\bibitem{ACJ} G. Aronsson, M.G. Crandall and P. Juutinen, {\it A
tour of the theory of absolutely minimizing functions}. Bull.
Amer. Math. Soc., 41, (2004), 439--505.

\bibitem{ARS} A. Azevedo, J. F. Rodrigues and L. Santos. {\it The N-membranes problem for quasilinear degenerate systems}. Interfaces Free Bound., 7(3), (2005), 319--337.

\bibitem{BChMR} P. Blanc, F. Charro, J. D. Rossi and J. J. Manfredi. \emph{A nonlinear Mean Value Property for the Monge-Amp\`ere operator}. 
J. Convex Analysis JOCA. 28(2), (2021), 353--386.

\bibitem{BMR} P. Blanc, J. J. Manfredi and J. D. Rossi. {\it Games for Pucci’s maximal operators}. Jour. Dyn. Games, 6(4), (2019), 277--289.

\bibitem{BlancPR} P. Blanc, J. P. Pinasco and J. D. Rossi. {\it Maximal operators for the $p-$Laplacian family}. 
Pacific J. Math., 287(2), (2017), 257--295.


\bibitem{BR} P. Blanc and J. D. Rossi. {\it Games for eigenvalues of the Hessian and concave/convex envelopes.} 
J. Math. Pures Appl., 127, (2019), 192--215.

\bibitem{BRLibro} P. Blanc and J. D. Rossi. {Game Theory and Partial Differential Equations.}
De Gruyter Series in Nonlinear Analysis and Applications. Vol. 31. 2019.
ISBN 978-3-11-061925-6.
ISBN 978-3-11-062179-2 (eBook).

\bibitem{Caffa1} 
L. Caffarelli, D. De Silva and O. Savin. {\it The two membranes problem for different operators.} Ann. l'Institut Henri Poincare C, Anal. non lineaire,
 34(4), (2017), 899--932.
 
 \bibitem{Caffa2} L. Caffarelli, L. Duque and H. Vivas. {\it The two membranes problem for fully nonlinear operators.} Discr. Cont. Dyn. Syst, 38(12), (2018), 6015--6027.


\bibitem{CChV} S. Carillo, M. Chipot and G. Vergara-Caffarelli. {\it The N-membrane problem with nonlocal
constraints}, J. Math. Anal. Appl., 308(1), (2005), 129--139.

\bibitem{ChGAR} F. Charro, J. Garcia Azorero and J. D. Rossi. {\it A mixed problem for
the infinity laplacian via Tug-of-War games.} Calc. Var. Partial
Differential Equations, 34(3), (2009), 307--320.

\bibitem{ChV} M. Chipot and G. Vergara-Caffarelli. {\it The N-membranes problem}, Appl. Math. Optim., 13(3),
(1985), 231--249.

\bibitem{Cran} M.G. Crandall. {\it A visit with the $\infty$-Laplace equation}, Calculus of variations and nonlinear partial differential equations, 
Lecture Notes in Math., Vol. 1927, Springer, Berlin, 2008, 75--122.


\bibitem{CIL} M.G. Crandall,  H. Ishii and  P.L. Lions. {\it User's guide to viscosity solutions of second order partial differential equations}. Bull. Amer. Math. Soc., 27, (1992), 1--67.



\bibitem{Doob} J.L. Doob, {\it What is a martingale ?}, Amer. Math. Monthly, 78(5), (1971), 451--463.

\bibitem{Doob2} J.L. Doob, Classical Potential Theory and Its Probabilistic Counterpart. Classics in Mathematics. Springer. 2001.

\bibitem{Doob3} J.L. Doob, {\it Semimartingales and subharmonic functions}. Trans. Amer. Math.
Soc., 77, (1954), 86--121. 

\bibitem{Hunt} G. A. Hunt. {\it Markoff processes and potentials I, II, III}, Illinois J. Math. 1
(1957), 44--93, 316--369; ibid. 2 (1958), 151--213. 


\bibitem{I} M. Ishiwata, R. Magnanini and H. Wadade. {\it A natural approach to the asymptotic mean value property for the p-Laplacian}. Calc. Var. Partial Differential Equations, 56(4), (2017), Art. 97, 22 pp.



\bibitem{Kac} M. Kac. {\it Random Walk and the Theory of Brownian Motion}.
Amer. Math. Monthly, 54(7), (1947), 
369--391.

\bibitem{Kaku} S. Kakutani, {\it Two-dimensional Brownian motion and harmonic functions}. Proc.
Imp. Acad. Tokyo, 20, (1944), 706--714. 

\bibitem{KMP} B. Kawohl, J.J. Manfredi and M. Parviainen. {\it Solutions of nonlinear PDEs in the sense of averages}. J. Math. Pures Appl., 97(3), (2012), 173--188.

\bibitem{FFF} A. W. Knapp. {\it Connection between Brownian Motion and
Potential Theory}. Jour. Math. Anal. Appl., 12, (1965), 328--349.  


\bibitem{Lewicka} M. Lewicka. A Course on Tug-of-War Games with Random Noise.
Introduction and Basic Constructions. Universitext book series. Springer, (2020).

\bibitem{ML} M. Lewicka and J. J. Manfredi. {\it The obstacle problem for the $p-$laplacian via optimal stopping of tug-of-war games.}
Prob. Theory Rel. Fields, 167(1-2), (2017), 349--378.



\bibitem{LM} P. Lindqvist and J. J. Manfredi. {\it On the mean value property for the $p-$Laplace equation in the plane}. Proc. Amer. Math. Soc.,
144(1), (2016), 143--149.


\bibitem{QS}
Q. Liu and A. Schikorra.
\textit{General existence of solutions to dynamic programming principle.}
 Commun. Pure Appl. Anal., 14(1), (2015), 167--184.
 
 

\bibitem{LPS}
H. Luiro, M. Parviainen, and E. Saksman.
{\it Harnack's inequality for p-harmonic functions via stochastic games.}
Comm. Partial Differential Equations, 38(11), (2013), 1985--2003.



\bibitem{MPR} J. J. Manfredi, M. Parviainen and J. D. Rossi. {\it An asymptotic mean value characterization for $p-$harmonic functions}. Proc. Amer. Math. Soc., 138(3), (2010), 881--889. 

 \bibitem{MPRa} J. J. Manfredi, M. Parviainen and J. D. Rossi.
\textit{Dynamic programming principle for tug-of-war games with noise.}
ESAIM, Control, Opt. Calc. Var., 18, (2012), 81--90.

\bibitem{MPRb} J. J. Manfredi, M. Parviainen and J. D. Rossi.
\textit{On the definition and properties of p-harmonious functions.}
Ann. Scuola Nor. Sup. Pisa, 11, (2012), 215--241.

\bibitem{MPRparab} J. J. Manfredi, M. Parviainen and J.~D.~Rossi.
{\it An asymptotic mean value  characterization for  a class of
nonlinear
 parabolic equations related to tug-of-war games.}
SIAM J. Math. Anal., 42(5), (2010), 2058--2081.


\bibitem{MRS} J. J. Manfredi, J. D. Rossi and S. J. Somersille.
{\it An obstacle problem for tug-of-war games.} 
Com. Pure Appl. Anal., 14(1), (2015), 217--228. 



\bibitem{nosotros} A. Miranda and J. D. Rossi. {\it A game theoretical approach for a nonlinear system driven by elliptic operators.} 
SN Partial Diff. Eq. Appl., 1(4), art. 14, pp 41, (2020).

\bibitem{nosotros2} A. Miranda and J. D. Rossi. {\it A game theoretical approximation for a parabolic/elliptic system with different operators.} 
Disc. Cont. Dyn. Syst., 43, (2023), 1625–1656.


\bibitem{Mitake} H. Mitake and H. V. Tran, {\it 
Weakly coupled systems of the infinity Laplace equations}, Trans. Amer. Math. Soc., 369 (2017), 1773--1795.


\bibitem{PSSW} Y. Peres, O. Schramm, S. Sheffield and D. Wilson,
{\it Tug-of-war and the infinity Laplacian.} J. Amer. Math. Soc.,
22, (2009), 167--210.


\bibitem{PS} Y. Peres and S. Sheffield, {\it Tug-of-war with noise:
a game theoretic view of the $p$-Laplacian}, Duke Math. J., 145(1), (2008), 91--120.

\bibitem{R} J. D. Rossi. {\it Tug-of-war games and PDEs.} Proc.
Royal Soc. Edim. 141A, (2011), 319--369.

\bibitem{S} L. Silvestre. {\it The two membranes problem}, Comm. Partial Differential Equations, 30(1-3), (2005), 245--257.

\bibitem{VC} G. Vergara-Caffarelli, {\it Regolarita di un problema di disequazioni variazionali relativo a due
membrane}, Atti Accad. Naz. Lincei Rend. Cl. Sci. Fis. Mat. Natur. (8) 50, (1971), 659–-662
(Italian, with English summary).

\bibitem{Vivas} H. Vivas. The two membranes problem for fully nonlinear local and nonlocal operators. PhD Thesis dissertation. UT Austin. (2018). https://repositories.lib.utexas.edu/handle/2152/74361

\bibitem{Williams} D. Williams, Probability with martingales, Cambridge University Press, Cambridge,
1991.


\end{thebibliography}
\end{document}